\definecolor{myblue}{rgb}{0,0,0.5}
\definecolor{mygreen}{rgb}{0,0.5,0}
\definecolor{myred}{rgb}{0.5,0,0}
\newcommand{\nn}{\nonumber}
\def \[{\begin{equation}}
\def \]{\end{equation}}
\newtheorem{theorem}{Theorem}[section]
\newtheorem{lemma}{Lemma}[section]
\newtheorem{proposition}{Proposition}[section]
\theoremstyle{definition}
\newtheorem{remark}{\it{Remark}}
\newtheorem{definition}{Definition}[section]
\newtheorem{assumption}{Assumption}[section]
\DeclareMathOperator*{\argmin}{argmin}
\newcommand{\blue}[1]{\begin{color}{blue}#1\end{color}}
\newlength{\len}
\def\ztilde{\tilde{z}}
\def\inprod#1#2{\langle #1,\, #2\rangle}
\begin{document}

\title{\bf \Large A Majorized ADMM with Indefinite Proximal Terms
 for Linearly Constrained  Convex Composite Optimization}

\author{Min Li\thanks{School of Economics and Management, Southeast
University, Nanjing, 210096, China ({\tt limin@seu.edu.cn}). The
research of this author was supported in part by the National
Natural Science Foundation of China (Grant No. 11001053, 71390333), Program
for New Century Excellent Talents in University (Grant No.
NCET-12-0111) and Qing Lan Project.}, \; Defeng
Sun\thanks{Department  of  Mathematics  and Risk  Management
Institute, National University of Singapore, 10 Lower Kent Ridge
Road, Singapore ({\tt matsundf@nus.edu.sg}). The research of this
author was supported in part by the Academic Research Fund (Grant No. R-146-000-207-112). } \
and \ Kim-Chuan Toh\thanks{Department of Mathematics, National
University of Singapore, 10 Lower Kent Ridge Road, Singapore ({\tt
mattohkc@nus.edu.sg}). The research of this author was supported in
part by the Ministry of Education, Singapore, Academic Research Fund
(Grant No. R-146-000-194-112).} }

\date{December 5, 2014; \; Revised: June 22, 2015}

\maketitle

\begin{abstract}
This paper presents a majorized alternating direction method of
multipliers (ADMM) with indefinite proximal terms for solving
linearly constrained $2$-block convex composite optimization
problems with each block in the objective being the sum of a
non-smooth convex function ($p(x)$ or $q(y)$) and a smooth convex
function ($f(x)$ or $g(y)$), i.e., $\min_{x \in {\cal X}, \; y \in
{\cal Y}}\{p(x)+f(x) + q(y)+g(y)\mid A^* x+B^* y = c\}$.
 By choosing the indefinite proximal terms properly, we
establish the global convergence, and the iteration-complexity in
both the non-ergodic and ergodic senses of the proposed method for
the step-length $\tau \in (0, (1+\sqrt{5})/2)$. The computational
benefit  of using indefinite proximal terms within the ADMM
framework instead of the current requirement of positive
semidefinite ones is also demonstrated numerically. This opens up a
new way to improve the practical performance of the  ADMM and
related methods.

\end{abstract}

\medskip
{\small
\begin{center}
\parbox{0.95\hsize}{{\bf Keywords.}\;
Alternating direction method of multipliers, Convex composite
optimization, Indefinite proximal terms, Majorization, Iteration-complexity
}
\end{center}

\begin{center}
\parbox{0.95\hsize}{{\bf AMS subject classifications.}\; 90C25, 90C33, 65K05}
\end{center}}

%%%%%%%%%%%%%%%%%%%%%%%%%%%%%%%
\section{Introduction}\label{Introduction}

We consider the following $2$-block convex composite  optimization
problem
\[  \label{ConvexP-G}
    \min_{x \in {\cal X}, \; y \in {\cal Y}} \Bigl\{p(x) + f(x) + q(y) + g(y)\; | \;   A^*x + B^*y = c \Bigr\},
   \]
where ${\cal X}$, ${\cal Y}$ and ${\cal
Z}$ are three real finite dimensional Euclidean spaces each equipped with an inner product
$\langle \cdot, \cdot \rangle$ and its induced norm $\|\cdot\|$;  $p: {\cal X} \rightarrow (-\infty, +\infty]$ and $q: {\cal Y}
\rightarrow (-\infty, +\infty]$ are two closed proper convex (not necessarily smooth)
 functions; $f: {\cal X} \rightarrow (-\infty, +\infty)$ and
$g: {\cal Y} \rightarrow (-\infty, +\infty)$ are  two convex
functions with Lipschitz continuous gradients on ${\cal X}$ and
${\cal Y}$, respectively; $A^*: {\cal X} \rightarrow {\cal Z}$ and
$B^*: {\cal Y} \rightarrow {\cal Z}$ are the adjoints of the linear
operators $A: {\cal Z} \rightarrow {\cal X}$ and $B: {\cal Z}
\rightarrow {\cal Y}$, respectively; and  $c \in {\cal Z}$. The
solution set of \eqref{ConvexP-G} is assumed to be nonempty
throughout  this paper.

Let $\sigma \in (0, +\infty)$ be a given parameter. Define the augmented Lagrangian function as
\[ {\cal L}_\sigma(x,y;z)  :=  p(x) + f(x) + q(y) + g(y) + \langle z, A^*x + B^*y - c \rangle +
\frac{\sigma}{2}\|A^* x + B^*y - c\|^2  \nn \] for any $(x, y, z)
\in {\cal X} \times {\cal Y} \times {\cal Z}$. One may attempt to
solve \eqref{ConvexP-G} by using the classical augmented Lagrangian method
(ALM), which consists of the following iterations:
\begin{equation}\label{Iter-ALM}
\left\{
\begin{array}{l}
  \displaystyle (x^{k+1}, y^{k+1}): = \argmin_{x \in {\cal X},\; y \in {\cal Y}} \; {\cal L}_\sigma(x, y; z^k), \\[5pt]
  z^{k+1}: = z^k + \tau \sigma(A^*x^{k+1} + B^*y^{k+1} - c),
\end{array}
\right.
\end{equation}
where $ \tau\in(0,2)$ guarantees the convergence. Due to the
non-separability of the quadratic penalty term in ${\cal
L}_{\sigma}$, it is generally a challenging task to solve the joint
minimization problem \eqref{Iter-ALM} exactly or approximately with
a high accuracy (which may not be necessary at the early stage of
the ALM). To overcome this difficulty, one may consider the
following popular $2$-block alternating direction method of
multipliers (ADMM) to solve \eqref{ConvexP-G}:
\begin{equation}\label{Iter-ADMM}
\left\{
\begin{array}{l}
  \displaystyle x^{k+1}: = \argmin_{x \in {\cal X}} \; {\cal L}_\sigma(x, y^k; z^k), \\[5pt]
  \displaystyle y^{k+1}: = \argmin_{y \in {\cal Y}} \;  {\cal L}_\sigma(x^{k+1},y; z^k), \\[5pt]
  z^{k+1}: = z^k + \tau \sigma(A^*x^{k+1} + B^*y^{k+1} - c),
\end{array}
\right.
\end{equation}
where $\tau \in (0, (1 + \sqrt{5})/2)$. The convergence of the
$2$-block ADMM has long been established under various conditions
and the classical literature includes \cite{GlowinskiMarroco75,
GabayMer76, Glowinski1980lectures, FortinGlowinski83, Gabay83,
EcksteinBertsekas92,Eckstein94}. For a recent survey, see
\cite{eckstein2012}.

By noting the facts that  the subproblems in \eqref{Iter-ADMM} may
still be difficult to solve and that in many applications $f$ or $g$
is a convex quadratic function, Fazel et al. \cite{FsPoSunTse}
advocated the use of the following semi-proximal ADMM scheme
\begin{equation}\label{Iter-semi-ADMM}
\left\{
\begin{array}{l}
  \displaystyle x^{k+1}: = \argmin_{x \in {\cal X}} \; {\cal L}_\sigma(x, y^k; z^k)
  + \frac{1}{2}\|x-x^k\|^2_{S}, \\[5pt]
  \displaystyle y^{k+1}: = \argmin_{y \in {\cal Y}} \;  {\cal L}_\sigma(x^{k+1},y; z^k)
  +  \frac{1}{2}\|y-y^k\|^2_{{T}}, \\[5pt]
  z^{k+1}: = z^k + \tau \sigma(A^*x^{k+1} + B^*y^{k+1} - c),
\end{array}
\right.
\end{equation}
where $\tau \in (0, (1 + \sqrt{5})/2)$, and  $S \succeq 0$ and $T
\succeq 0$ are two self-adjoint and positive semidefinite (not
necessarily positive definite) linear operators. We refer the
readers to   \cite{FsPoSunTse} as well as   \cite{DengYin} for a
brief history on the development of the semi-proximal ADMM scheme
\eqref{Iter-semi-ADMM}.

The successful applications of the $2$-block ADMM in solving various
problems to acceptable levels of moderate accuracy  have inevitably
inspired many researchers' interest in extending the scheme to the
general $m$-block ($m \ge 3$) case. However, it has been shown very
recently  by Chen et al.  \cite{ChenHeYeYuan} via simple
counterexamples that the direct extension of the ADMM to the
simplest $3$-block case can be divergent even if the step-length
$\tau$ is chosen to be as small as $10^{-8}$. This seems to suggest
that one has to give up the direct extension of $m$-block ($m \ge
3$) ADMM unless if one is willing to take a sufficiently small
step-length $\tau$ as was shown by Hong and Luo in \cite{HongL2012}
or to take a small penalty parameter $\sigma$ if at least $m-2$
blocks in the objective are strongly convex
\cite{HanYuan2012,ChenShenYou,LinMaZhang2014A,LinMaZhang2014B,LiMSunToh}.
On the other hand, despite the potential divergence, the directly
extended $m$-block ADMM with $\tau \ge 1$ and an appropriate  choice
of $\sigma$ often works very well in practice.

Recently, there is exciting progress in designing convergent and
efficient ADMM  type methods for solving multi-block linear and
convex quadratic semidefinite programming problems
\cite{SunTohYang,LiSunToh}. The convergence proof of  the methods
presented  in \cite{SunTohYang} and \cite{LiSunToh}  is via
establishing their equivalence to   particular cases of the general
$2$-block semi-proximal ADMM considered in \cite{FsPoSunTse}. It is
this important  fact that inspires us to extend  the $2$-block
semi-proximal  ADMM in \cite{FsPoSunTse} to a majorized ADMM with
indefinite proximal terms (which we name it as Majorized iPADMM) in
this paper. Our new algorithm has two important  aspects. Firstly,
we introduce a majorization technique to deal with the case where
$f$ and $g$ in \eqref{ConvexP-G} may not be quadratic or linear
functions.  The purpose of the majorization is to make the
corresponding subproblems in \eqref{Iter-semi-ADMM} more amenable to
efficient computations. We  note that a similar majorization
technique has also been used by Wang and Banerjee
\cite{WangBanerjee14} under the more general setting of Bregman
distance functions.
 The drawback of the Bregman distance function  based ADMM discussed in
\cite{WangBanerjee14} is that the parameter $\tau$ should be   small
for the global convergence. For example, if we choose the Euclidean
distance as the Bregman divergence, then the corresponding parameter
$\tau$ should be smaller than $1$. By focusing on the Euclidean
divergence instead of the more general Bregman divergence, we allow
$\tau$ to stay in the larger interval $(0, (1+\sqrt{5})/2)$.
Secondly and more importantly,  we allow the added proximal terms to
be indefinite for better practical performance. The introduction of
the indefinite proximal terms instead of the commonly used  positive
semidefinite or positive definite terms is motivated by numerical
evidences showing that the former can outperform the latter in
%provided by
the majorized penalty approach for solving  rank constrained  matrix
optimization  problems  in \cite{GSun10} and
 in solving linear semidefinite programming problems with a large number of inequality constraints in \cite{SunTohYang}.

Here, we conduct a rigorous study of the conditions under which
indefinite proximal terms are allowed within the $2$-block ADMM
while also establishing the convergence of the algorithm. We have
thus provided the necessary theoretical support for the numerical
observation just mentioned in establishing the convergence of the
indefinite-proximal $2$-block ADMM. Interestingly,  Deng and Yin
\cite{DengYin} mentioned that the matrix $T$ in the ADMM scheme
\eqref{Iter-semi-ADMM} may be indefinite if $\tau\in (0,1)$ though
no further developments are given. As far as we are aware of, this
is the first paper proving that indefinite proximal terms can be
employed within the ADMM framework with convergence guarantee while
not making restrictive assumptions on the step-length parameter
$\tau$ or the penalty parameter $\sigma$.

Besides establishing the convergence and a simple non-ergodic
iteration-complexity of our proposed majorized indefinite-proximal
ADMM, we also establish its worst-case $O(1/k)$ ergodic
iteration-complexity.
 The study of the ergodic iteration-complexity of the classical ADMM
is inspired by Nemirovski \cite{Nemirovski04SIAM}, who proposed a
prox-method with $O(1/k)$ iteration-complexity for variational
inequalities. Monteiro and  Svaiter \cite{MonSvaSIOPT10} analyzed
the iteration-complexity of a hybrid proximal extragradient (HPE)
method. They also considered the ergodic  iteration-complexity of
block-decomposition algorithms and the ADMM in \cite{MonSvaSIOPT13}.
He and Yuan \cite{HeYuanSIAM12} provided a simple and different
proof for the $O(1/k)$ ergodic iteration-complexity for a special
semi-proximal ADMM scheme (where  the $x$-part uses a semi-proximal
term while the $y$-part does not). Tao and Yuan \cite{TaoYuan-LQP}
proved  the $O(1/k)$ ergodic iteration-complexity of the ADMM with a
logarithmic-quadratic proximal regularization even for $\tau \in (0,
(1 + \sqrt{5})/2)$. Ouyang et al. \cite{OuChenLanPas15} provided an
ergodic iteration-complexity for an accelerated linearized ADMM.
Wang and Banerjee \cite{WangBanerjee14} generalized the ADMM to
Bregman function based ADMM, which allows the choice of different
Bregman divergences and still has the $O(1/k)$ iteration-complexity.

The remaining parts  of this paper are  organized as follows. In
Section \ref{Preliminaries}, we summarize some useful results for
further analysis. Then, we present our   majorized
indefinite-proximal ADMM in Section \ref{NewAlgorithm}, followed by
some basic properties on the generated sequence. In Section \ref{Convergence},
we present the global convergence and the choices of proximal terms. The
analysis of the non-ergodic iteration-complexity as well as the ergodic
one is provided in Section \ref{iter-complexity}. In
Section \ref{Experiments}, we provide some illustrative examples to
show the potential numerical efficiency that one can gain from the new
scheme when using an indefinite proximal term versus the standard
choice of a positive semidefinite proximal term.

\medskip

{\bf Notation.}
\begin{itemize}
\item The effective domain of a function $h$: ${\cal X}
\rightarrow (-\infty, +\infty]$ is defined as $\hbox{dom}(h): = \{x
\in {\cal X} \; | \; h(x) < + \infty\}$.

\item The set of all relative
interior points of a convex set $C$  is denoted by
ri$(C)$.

\item For convenience, we use $\|x\|_S^2$ to denote $\langle x,
Sx \rangle$ even if $S$ is only a self-adjoint linear operator which
may be indefinite. If $M: {\cal X} \to {\cal X}$ is a
self-adjoint and positive semidefinite linear operator,  we use
$M^{\frac{1}{2}}$ to denote the unique self-adjoint and positive
semidefinite  square root of $M$.

\end{itemize}

%%%%%%%%%%%%%%%%%%%%%%%%%%%%%%%%%%%%%
\section{Preliminaries}\label{Preliminaries}

In this section, we first introduce some notation to be used in our analysis and
then summarize some useful preliminaries known in the literature.

Throughout this paper, we  assume that the following  assumption holds.

\begin{assumption}\label{assump-fg-smooth}
Both $f(\cdot)$ and $g(\cdot)$ are smooth convex functions with Lipschitz continuous gradients.
\end{assumption}

Under Assumption \ref{assump-fg-smooth},  we know that  there exist two
self-adjoint and positive semidefinite linear operators $\Sigma_f$
and $\Sigma_g$ such that for any $x, {x}^\prime \in  {\cal X}$
and
 any $y, {y}^\prime \in {\cal Y}$,
\begin{eqnarray}
 f(x) &\ge& f({x}^\prime) + \langle x - {x}^\prime,\; \nabla f({x}^\prime) \rangle
  + \frac{1}{2} \|x - {x}^\prime\|_{\Sigma_f}^2,
\label{convex-f}
\\
 g(y) &\ge& g({y}^\prime) + \langle y - {y}^\prime,\; \nabla g({y}^\prime) \rangle
  + \frac{1}{2} \|y - {y}^\prime\|_{\Sigma_g}^2;
 \label{gy}
\end{eqnarray}
moreover, there exist   self-adjoint and positive semidefinite
linear operators $\widehat{\Sigma}_f\succeq {\Sigma}_f$ and
$\widehat{\Sigma}_g\succeq {\Sigma}_g$ such that for any $x,
{x}^\prime \in  {\cal X}$ and
 any  $y, {y}^\prime \in {\cal Y}$,
\begin{eqnarray}
 f(x) &\le& \hat{f}(x;x^\prime):=f({x}^\prime) + \langle x - {x}^\prime,\; \nabla f({x}^\prime) \rangle
  + \frac{1}{2} \|x - {x}^\prime\|_{\widehat{\Sigma}_f}^2,
\label{Majorize-f}
\\
 g(y) &\le &\hat{g}(y; y^\prime):= g({y}^\prime) + \langle y - {y}^\prime,\; \nabla g({y}^\prime) \rangle
  + \frac{1}{2} \|y - {y}^\prime\|_{\widehat{\Sigma}_g}^2.
\label{Majorize-g}
\end{eqnarray}
The two functions $\hat f$ and $\hat g$ are called the majorized convex functions of $f$ and $g$, respectively.   For any given $y\in {\cal Y}$, let $\partial ^2 g(y)$ be Clarke's generalized Jacobian of $\nabla g(\cdot)$ at $y$, i.e.,
\[ \partial^2 g(y) = \hbox{conv}\big\{ \lim_{y^k \rightarrow y} \nabla^2 g(y^k)\blue{:}  \nabla^2 g(y^k) \; \hbox{exists}\big\}, \]
where ``\hbox{conv}" denotes the convex hull. Then for any given  $y\in {\cal Y}$,   $W \in \partial^2 g(y)$ is a self-adjoint and positive semidefinite linear operator satisfying
\begin{equation}\label{assump-Hessian}
\widehat{\Sigma}_g \succeq W \succeq \Sigma_g  \succeq 0 .
\end{equation}

%\begin{assumption}\label{assump-Hessian}
%For given self-adjoint and positive semidefinite linear operators
%$\widehat{\Sigma}_f$, $\Sigma_f$, $\widehat{\Sigma}_g$, $\Sigma_g$
%and $W \in \partial^2 g(y)$ where $y \in \hbox{dom}(q)$, we have
%$\widehat{\Sigma}_f \succeq \Sigma_f \succeq 0$ and
%$\widehat{\Sigma}_g \succeq W \succeq \Sigma_g  \succeq 0$.
%\end{assumption}

For further discussions, we  need the following constraint
qualification.

\begin{assumption}\label{assump-CQ} There exists $(x_0, y_0) \in \hbox{ri}
(\hbox{dom}(p)  \times \hbox{dom}(q)) \bigcap P$, where
$P: = \{(x, y) \in {\cal X} \times {\cal Y} \; | \; A^*x + B^*y = c \}$.
\end{assumption}

Under Assumption \ref{assump-CQ}, it follows from \cite[Corollary 28.2.2]{Rockafellar70}
and \cite[Corollary 28.3.1]{Rockafellar70} that $(\bar{x}, \bar{y})
\in \hbox{dom}(p)  \times \hbox{dom}(q)$ is an optimal solution to problem
 \eqref{ConvexP-G} if and only if there exists a Lagrange multiplier $\bar{z} \in
{\cal Z}$ such that $(\bar{x}, \bar{y}, \bar{z})$ satisfies the
following Karush-Kuhn-Tucker (KKT) system
\[\label{gradient-pq}
   0\in   \partial p(\bar{x}) + \nabla f(\bar{x}) + A\bar{z}, \quad
  0\in     \partial q(\bar{y}) + \nabla g(\bar{y}) + B\bar{z}, \quad
   c- A^* \bar{x} - B^* \bar{y} = 0, \]
where $\partial p(\cdot)$ and $\partial q(\cdot)$ are the subdifferential mappings
of $p$ and $q$, respectively. Moreover, any $\bar{z} \in {\cal Z}$
satisfying \eqref{gradient-pq}  is an optimal solution to the dual
 of problem \eqref{ConvexP-G}. Therefore, we call
$(\widehat{x}, \widehat{y}, \widehat{z})\in \hbox{dom}(p)  \times \hbox{dom}(q)\times {\cal Z} $ an
$\varepsilon$-approximate KKT  point of \eqref{ConvexP-G} if it
satisfies
\[  d^2\big(0, \; \partial p(\widehat{x}) + \nabla f(\widehat{x}) +  A \widehat{z}\big) + d^2\big(0, \; \partial q(\widehat{y}) + \nabla g(\widehat{y}) +
   B \widehat{z}\big) + \|A^*\widehat{x} + B^*\widehat{y} - c\|^2 \le \varepsilon,\nn\]
where $d(w, S)$ denotes  the Euclidean distance of  a given  point $w$
to a  set $S$.

 By the assumption  that $p$ and $q$
are convex functions, \eqref{gradient-pq} is equivalent to finding a
vector $(\bar{x}, \bar{y}, \bar{z}) \in {\cal X} \times {\cal Y}
\times  {\cal Z}$ such that for any $(x, y) \in {\cal X} \times
{\cal Y}$, we have
\begin{equation}\label{optimalcon*}
\left\{
\begin{array}{l}
p(x)  -  p(\bar{x})  +  \langle x - \bar{x},  \nabla f(\bar{x}) + A\bar{z} \rangle   \ge 0, \\[0.2cm]
q(y)  -  q(\bar{y})  +  \langle y - \bar{y},  \nabla g(\bar{y}) + B\bar{z} \rangle \ge 0, \\[0.2cm]
c-A^*\bar{x} - B^* \bar{y}  = 0
\end{array}
\right.
\end{equation}
or equivalently
\begin{equation}\label{optimalcon1}
\left\{
\begin{array}{l}
 \big(p(x) + f(x)\big) - \big(p(\bar{x}) + f(\bar{x})\big)
     + \langle x - \bar{x},  A\bar{z} \rangle   \ge 0, \\[0.2cm]
 \big(q(y) +g(y)\big)  - \big(q(\bar{y}) + g(\bar{y})\big)
     + \langle y - \bar{y},  B\bar{z} \rangle \ge 0, \\[0.2cm]
c-A^* \bar{x} - B^* \bar{y}  = 0,
\end{array}
\right.
\end{equation}
which are obtained by using the assumption that $f$ and $g$ are
smooth convex functions.

It is easy to see that \eqref{optimalcon*} can be rewritten as the
following variational inequality problem: find a vector $\bar{w}
:=(\bar{x}, \bar{y}, \bar{z}) \in {\cal W}  := {\cal X} \times {\cal
Y} \times {\cal Z}$ such that
\begin{equation}
 \theta(u) - \theta(\bar{u}) + \langle w - \bar{w},  F(\bar{w})\rangle \ge 0
    \qquad \forall\, w \in {\cal W}   \label{A-VI}
\end{equation}
with
\begin{equation}
      u := \left( \begin{array}{c} x \\ y
          \end{array}\right), \;\;
    \theta(u) := p(x) + q(y), \;\; w := \left( \begin{array}{c} x \\ y \\ z
          \end{array}\right)  \;\; \hbox{and} \;\;
      F(w):=\left(\begin{array}{c}    \nabla f(x) + A z \\ \nabla g(y)  + B z \\ c-A^*x - B^* y\end{array}\right). \label{A-uFu}
\end{equation}
We denote by VI$({\cal W}, F, \theta)$ the variational inequality
problem (\ref{A-VI})-(\ref{A-uFu}); and by ${\cal W}^*$ the solution
set of VI$({\cal W}, F, \theta)$, which is nonempty under Assumption \ref{assump-CQ} and the fact that
the solution set of problem (\ref{ConvexP-G}) is assumed to be nonempty. Note that the
mapping $F(\cdot)$ in (\ref{A-uFu}) is monotone with respect to ${\cal
W}$. Thus by \cite[Theorem 2.3.5]{FacchineiPang}, the solution set
${\cal W}^*$ of VI$({\cal W}, F, \theta)$ is closed and convex and
it can be characterized as follows:
\[  {\cal W}^* := \bigcap_{w \in {\cal W}} \{ \tilde{w} \in {\cal W}\; | \;
      \theta(u) - \theta(\tilde{u}) +\langle w - \tilde{w},  F(w)\rangle \ge 0 \}.  \nonumber \]
Similarly as \cite[Definition 1]{Nesterov07}, we give the following
definition for an $\varepsilon$-approximation solution of the
variational inequality problem.
\begin{definition}\label{Def-approx-solution}
$\tilde{w} \in {\cal W}$ is an
$\varepsilon$-approximation solution of VI$({\cal W}, F, \theta)$
if it satisfies
\[ \label{appr-solI}
     \sup_{w \in {\cal B}(\tilde{w})}\big\{ \theta(\tilde{u}) - \theta(u) + \langle \tilde{w} - w,  F(w)\rangle \big\} \le \varepsilon,\;\;
    \hbox{where} \;\;{\cal B}(\tilde{w}): = \big\{ w \in {\cal W}  \; | \; \|w - \tilde{w}\| \le 1\big\}.
  \]
%and $G$ is a self-adjoint and positive definite linear operator.
\end{definition}
Based on this definition, the worst-case $O(1/k)$ ergodic
iteration-complexity of our proposed algorithm will be established in the
sense that we can find a $\tilde{w} \in {\cal W}$ such that
\[  \theta(\tilde{u}) - \theta(u) + \langle \tilde{w} - w, F(w)\rangle \le  \varepsilon  \qquad \forall \, w \in  {\cal B}(\tilde{w})  \nonumber   \]
with $\varepsilon = O(1/k)$, after $k$ iterations.

The following lemma, motivated by \cite[Lemma 1.2]{DengLaiPengYin},
is convenient for discussing the non-ergodic iteration-complexity.

\begin{lemma}\label{prop-lem} If a sequence $\{a_i\} \subseteq \Re$ obeys:
$(1)$ $a_i \ge 0$; $(2)$ $\sum_{i=1}^{\infty} a_i < + \infty$, then
we have $\min_{1 \le i \le k}\{a_i\}= o(1/k)$.
\end{lemma}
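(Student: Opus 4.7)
The plan is to show that $k\min_{1\le i\le k} a_i \to 0$ by exploiting the convergence of the series through its tail, combined with the elementary observation that the minimum over any finite set of nonnegative numbers is bounded above by their average. The crucial idea is to compare the overall minimum not to the average over all indices $1,\dots,k$, but rather to the average over the ``second half'' of indices, so that the factor $k$ and the number of terms in the average cancel, leaving only a tail sum that vanishes.

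First, I would set $m_k := \min_{1 \le i \le k} a_i$ and note the trivial monotonicity inequality
\[
m_k \;\le\; \min_{\lfloor k/2 \rfloor+1 \,\le\, i \,\le\, k} a_i.
\]
Since the minimum of a collection of real numbers is no larger than their arithmetic mean, and $a_i\ge 0$ by (1), this gives
\[
m_k \;\le\; \frac{1}{k - \lfloor k/2 \rfloor}\sum_{i=\lfloor k/2\rfloor+1}^{k} a_i \;\le\; \frac{2}{k}\sum_{i=\lfloor k/2\rfloor+1}^{k} a_i,
\]
using $k-\lfloor k/2\rfloor \ge k/2$. Multiplying by $k$ yields
\[
k\,m_k \;\le\; 2\sum_{i=\lfloor k/2\rfloor+1}^{k} a_i.
\]

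Next, I would invoke assumption (2): writing $r_N := \sum_{i>N}a_i$, the convergence of $\sum_{i=1}^\infty a_i$ implies $r_N\to 0$ as $N\to\infty$. The right-hand side above equals $2(r_{\lfloor k/2\rfloor}-r_k)$, which therefore tends to $0$ as $k\to\infty$. Hence $k\,m_k\to 0$, i.e., $m_k=o(1/k)$, which is precisely the claim.

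There is no substantial obstacle here; the only real ``insight'' is the choice to split at $\lfloor k/2\rfloor$ rather than averaging over all of $\{1,\dots,k\}$ (which would only yield $O(1/k)$, not $o(1/k)$). Averaging over a constant fraction of the tail indices produces a Cauchy-type remainder $r_{\lfloor k/2\rfloor}-r_k$ that vanishes by assumption (2), while retaining the linear number of terms needed to absorb the factor $k$.
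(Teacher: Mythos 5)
Your proof is correct and is essentially the same argument as the paper's: the paper bounds $k\cdot\min_{1\le i\le 2k}a_i$ by $\sum_{i=k+1}^{2k}a_i$ and lets the tail sum vanish, which is exactly your ``average over the second half of the indices'' device, just written with the index $2k$ instead of splitting $k$ at $\lfloor k/2\rfloor$. If anything, your version handles odd $k$ explicitly, whereas the paper leaves the passage from even to general indices implicit.
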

\begin{proof} Since $\min_{1 \le i \le 2k}\{a_i\}  \le a_j$ for any
 $k+1 \le j \le 2k$, we get
\[ 0 \le k\cdot \min_{1 \le i \le 2k}\{a_i\} \le \sum_{i=k+1}^{2k} a_i \rightarrow 0 \quad \qquad{\rm as} \quad k \rightarrow \infty.\nn\]
 Therefore, we get $\min_{1 \le i \le k}\{a_i\} = o(1/k)$. The proof is completed.
\end{proof}

%\section{A Majorized ADMM with Indefinite Proximal Terms}
%%%%%%%%%%%%%%%%%%%%%%%%%%%%%%%%%%%%%%%
\section{A majorized ADMM with indefinite proximal terms}\label{NewAlgorithm}

%Let the majorized augmented Lagrangian function
%$\widehat{\cal L}_\sigma(x, y; (z, x^\prime, y^\prime))$ be defined by \eqref{eq:majorizedauglag}.
Let $z \in {\cal Z}$ be the Lagrange multiplier associated with the
linear equality constraint in \eqref{ConvexP-G} and let the Lagrangian
function of \eqref{ConvexP-G} be
\[\label{Lagrange-func1}
{\cal L}(x, y; z):=
 p(x) + f(x) + q(y) + g(y) + \langle z, A^*x + B^*y - c \rangle\]
defined on ${\cal X} \times {\cal Y} \times  {\cal Z}$. Similarly,
for given $(x^\prime, y^\prime) \in {\cal X} \times {\cal Y}$,
$\sigma \in (0, +\infty)$ and any $(x, y, z) \in {\cal X} \times
{\cal Y} \times {\cal Z}$, define the majorized augmented Lagrangian
function as follows:
\begin{equation}\label{eq:majorizedauglag}
\widehat{\cal L}_\sigma(x, y; (z, x^\prime, y^\prime)) := p(x) +
\hat f(x; x^\prime)+ q(y) + \hat g(y; y^\prime)  + \langle z, A^*x +
B^*y - c \rangle + \frac{\sigma}{2}\|A^* x + B^*y - c\|^2,
\end{equation}
where the two majorized convex functions $\hat f$ and $\hat g$ are defined by (\ref{Majorize-f}) and (\ref{Majorize-g}), respectively.
Our promised majorized ADMM with indefinite proximal terms for solving problem
\eqref{ConvexP-G} can then  be described as in the following.

\bigskip

\centerline{\fbox{\parbox{\textwidth}{ {\bf Majorized iPADMM: \bf{A
majorized ADMM with indefinite proximal terms for solving problem
\eqref{ConvexP-G}.}}
\\
Let $\sigma \in (0, +\infty)$ and $\tau \in (0, +\infty)$ be given
parameters. Let $S$ and $T$ be given self-adjoint, possibly
indefinite, linear  operators defined on ${\cal X}$ and ${\cal Y}$, respectively  such that \[
\nn
{\cal P} := \widehat{\Sigma}_f + S + \sigma A A^* \succeq 0\quad {\rm and}\quad
{\cal Q} :=\widehat{\Sigma}_g + T + \sigma B B^* \succeq 0.\]
 Choose
$(x^0, y^0, z^0) \in \hbox{dom}(p) \times \hbox{dom}(q) \times {\cal
Z}$.  Set $k=0$ and denote
 $\widehat{r}^{0} :=  A^*x^{0} + B^*y^{0} - c +\sigma^{-1} z^0.$
\begin{description}
\item [Step 1.]  Compute
\begin{equation}\label{Iter-New-2}
 \hspace{-0.3cm}\left\{\begin{array}{ll}
  x^{k+1} &:\displaystyle = \argmin_{x \in {\cal X}} \;\widehat{\cal L}_\sigma(x, y^k; (z^k, x^k,y^k))
  + \frac{1}{2}\|x-x^k\|^2_{S} \\[5pt]
  &\displaystyle =
 \argmin_{x \in {\cal X}}\; p(x) + \frac{1}{2} \inprod{x}{ {\cal P}
x} + \inprod{\nabla f(x^k)
 + \sigma A \widehat{r}^k -
{\cal P} x^k }{x},
\\[5pt]
 y^{k+1}&: \displaystyle = \argmin_{y \in {\cal Y}} \; \widehat{\cal L}_\sigma(x^{k+1},y; (z^k, x^k, y^k))
  +  \frac{1}{2}\|y-y^k\|^2_{{T}} \\[5pt]
&\displaystyle =  \argmin_{y \in {\cal Y}}\; q(y)+ \frac{1}{2}
\inprod{y}{ {\cal Q} y}+ \inprod{\nabla g(y^k)+ \sigma B (\widehat{r}^k + A^*(x^{k+1} -x^k))
  -{\cal Q} y^k }{y},
\\[5pt]
z^{k+1}&:= z^k + \tau \sigma(A^*x^{k+1} + B^*y^{k+1} - c).
\end{array}\right.
\end{equation}
\item [Step 2.] If a termination criterion is not met, denote
$\widehat{r}^{k+1} :=  A^*x^{k+1} + B^*y^{k+1} - c +\sigma^{-1} z^{k+1}.$
Set $k:=k+1$ and  go to Step 1.
  \end{description}
}}}
\bigskip

\begin{remark}\label{rem:alg} In the above Majorized iPADMM for solving problem
\eqref{ConvexP-G}, the presence of the two self-adjoint operators
$S$ and $T$ first helps to guarantee the existence of solutions for
the subproblems in \eqref{Iter-New-2}. Secondly, they play an
important role in ensuring the  boundedness of the two generated
sequences $\{x^{k+1}\}$ and $\{y^{k+1}\}$. Thirdly, as demonstrated
in \cite{LiSunToh}, the introduction of $S$ and $T$ is the key for
dealing with additionally an arbitrary number of convex quadratic
and linear functions. Hence, these two proximal terms are preferred
although the choices of $S$ and $T$ are very much problem dependent.
The general principle  is that both $S$ and $T$ should be chosen
such that $x^{k+1}$ and $y^{k+1}$   take larger step-lengths
while they are still relatively easy to compute. From a numerical
point of view, it is therefore advantageous to pick an indefinite
$S$ or $T$ whenever possible. The issue on how to choose $S$ and $T$
will be discussed in the later sections.
\end{remark}

For notational convenience,   for  given $\alpha \in (0, 1]$ and $\tau \in (0, +\infty)$,   denote
\begin{equation}\label{H-M}
H_f:= \frac{1}{2}\Sigma_f + S +  \frac{1}{2} (1-\alpha)\sigma A A^*
\quad \hbox{and} \quad M_g:= \frac{1}{2}\Sigma_g + T + \min(\tau, \;
1+\tau-\tau^2)\alpha \sigma B
 B^*,\end{equation}
for $(x, y, z) \in {\cal X}
\times {\cal Y} \times {\cal Z}$, $k = 0, 1, \ldots$,
define
\begin{equation}\label{Notation_dtp0}
 \phi_k(x, y, z)  :=  (\tau \sigma)^{-1} \|z^k - z\|^2 + \|x^k - x\|^2_{\widehat{\Sigma}_f + S} + \|y^k - y\|^2_{\widehat{\Sigma}_g +T}
   + \sigma \|A^*x + B^*y^k - c\|^2,
\end{equation}
\begin{equation}\label{Notation_dtp1}
\left\{
\begin{array}{l}
 \xi_{k+1} : =  \|y^{k+1} -y^k\|^2_{\widehat{\Sigma}_g + T}, \\
[8pt]
 s_{k+1} : =   \|x^{k+1} -
x^k\|_{\frac{1}{2}\Sigma_f + S}^2 + \|y^{k+1} -
y^k\|^2_{\frac{1}{2}\Sigma_g + T}, \\ [8pt]
 t_{k+1}  : = \|x^{k+1} - x^k\|_{H_f}^2 + \|y^{k+1} - y^k\|^2_{M_g}
\end{array}
\right.
\end{equation}
and
\begin{equation} \label{ADMM-notation}
 r^k := A^* x^k + B^* y^k - c, \qquad \ztilde^{k+1} :=  z^k + \sigma r^{k+1}.
\end{equation}
We also recall the following elementary identities which will be used later.

\begin{lemma}
\begin{itemize}
\item[$(a)$] For any vectors $u_1, u_2, v_1, v_2$ in the same Euclidean vector space ${\cal X}$, we have
the identity:
\[\label{identity}
\langle u_1-u_2, v_1-v_2\rangle = \frac{1}{2}(\|v_2 - u_1 \|^2 - \| v_1 - u_1\|^2) + \frac{1}{2}(\|v_1 - u_2\|^2 - \|v_2 - u_2\|^2).
  \]
\item[$(b)$] For any vectors $u,v$ in the same Euclidean vector space ${\cal X}$ and any self-adjoint linear operator $G: {\cal X} \to {\cal X}$, we have
the identity:
\[\label{identity-G}
\langle u, Gv\rangle = \frac{1}{2}\big(\|u\|_G^2  + \|v\|_G^2 - \|u - v\|_G^2\big)
  = \frac{1}{2}\big(\|u+v\|_G^2 - \|u\|_G^2 - \|v\|_G^2\big).
\]
\end{itemize}
\end{lemma}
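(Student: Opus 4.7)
The statement consists of two standard polarization-type identities, so the proof strategy is pure algebraic expansion rather than any deep argument; the only subtlety is that in (b) the operator $G$ is merely self-adjoint, not assumed positive semidefinite, so $\|\cdot\|_G^2$ is the indefinite quadratic form $\langle \cdot, G \cdot\rangle$ rather than a genuine squared norm. I would explicitly flag this at the start so the reader is not puzzled by the notation, recalling the convention introduced in the Notation section that $\|x\|_S^2 := \langle x, Sx\rangle$ even when $S$ is indefinite.

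For part (a), the plan is to expand each of the four squared norms on the right-hand side using $\|a-b\|^2 = \|a\|^2 - 2\langle a,b\rangle + \|b\|^2$ and collect terms. A slightly cleaner path, which I would prefer, is to group in differences: write
\[
\|v_2 - u_1\|^2 - \|v_1 - u_1\|^2 = \langle v_2 - v_1,\; v_1 + v_2 - 2u_1\rangle,
\]
\[
\|v_1 - u_2\|^2 - \|v_2 - u_2\|^2 = -\langle v_2 - v_1,\; v_1 + v_2 - 2u_2\rangle,
\]
using $\|a\|^2 - \|b\|^2 = \langle a-b, a+b\rangle$. Summing the two, the symmetric parts $v_1+v_2$ cancel and one is left with $\langle v_2 - v_1, 2(u_2-u_1)\rangle = 2\langle u_1-u_2, v_1-v_2\rangle$. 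Dividing by two yields the claim.

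For part (b), I would start from the self-adjointness of $G$, which gives $\langle u, Gv\rangle = \langle Gu, v\rangle$ and hence
\[
\|u - v\|_G^2 = \langle u-v,\; G(u-v)\rangle = \|u\|_G^2 - 2\langle u, Gv\rangle + \|v\|_G^2,
\]
where the cross terms combine thanks to self-adjointness. Solving for $\langle u, Gv\rangle$ gives the first equality. The second equality follows in the same way from expanding $\|u+v\|_G^2 = \|u\|_G^2 + 2\langle u, Gv\rangle + \|v\|_G^2$, or alternatively by substituting $v \leftarrow -v$ in the first equality and noting that $\|-v\|_G^2 = \|v\|_G^2$ and $\langle u, G(-v)\rangle = -\langle u, Gv\rangle$.

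There is no real obstacle here: both identities are one-line algebra once the quadratic forms are expanded, and the only thing to be careful about is that definiteness of $G$ is never used in (b)—only self-adjointness, which is exactly what the hypothesis provides. I would therefore keep the proof short, essentially just displaying the two expansions above.
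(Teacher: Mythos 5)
Your proof is correct. The paper simply states these identities as ``elementary identities which will be used later'' and supplies no proof at all, so there is nothing to compare against; your algebraic verification --- the difference-of-squares grouping in part (a) and the expansion of $\|u\pm v\|_G^2$ using only the self-adjointness of $G$ in part (b), together with the correct remark that definiteness of $G$ is never needed --- is exactly the standard argument one would write down.
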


To prove the global convergence for the Majorized iPADMM, we first
present some useful lemmas.

\begin{lemma} \label{lem-3.2}
Suppose that Assumption {\em\ref{assump-fg-smooth}} holds. Let $\{z^{k+1}\}$ be generated by \eqref{Iter-New-2} and let
$\{\ztilde^{k+1}\}$ be defined by \eqref{ADMM-notation}. Then for
any
%$\sigma \in (0, +\infty)$, $\tau \in (0, +\infty)$ and
$z \in
{\cal Z}$ we have for $k\ge 0 $ that
\[ \label{z-relation}\frac{1}{\sigma}\langle z - \ztilde^{k+1}, \ztilde^{k+1} - z^k\rangle + \frac{1}{2\sigma}\|z^k - \ztilde^{k+1}\|^2
 = \frac{1}{2\tau\sigma}(\|z^k - z\|^2 - \|z^{k+1} - z\|^2)
 + \frac{\tau - 1}{2\sigma}\|z^k - \ztilde^{k+1}\|^2.  \]
\end{lemma}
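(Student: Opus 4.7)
The identity to be proved is purely algebraic, relating inner products and squared norms built from $z$, $z^k$, $z^{k+1}$ and $\tilde z^{k+1}$. The key structural fact to exploit is the relation between these vectors implied by the dual update in \eqref{Iter-New-2} together with the definition $\tilde z^{k+1} = z^k + \sigma r^{k+1}$ in \eqref{ADMM-notation}. Namely,
\[
z^{k+1} - z^k \;=\; \tau\sigma r^{k+1} \;=\; \tau(\tilde z^{k+1} - z^k),
\]
so that $\tilde z^{k+1} - z^k = \tfrac{1}{\tau}(z^{k+1}-z^k)$, and consequently
\[
z - \tilde z^{k+1} \;=\; (z - z^k) - \tfrac{1}{\tau}(z^{k+1}-z^k).
\]
The plan is to substitute these two expressions into the inner product on the left-hand side and reduce everything to the two "canonical" quantities $\|z-z^k\|^2 - \|z-z^{k+1}\|^2$ and $\|\tilde z^{k+1} - z^k\|^2$.

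Carrying out the substitution,
\[
\tfrac{1}{\sigma}\langle z - \tilde z^{k+1},\, \tilde z^{k+1} - z^k\rangle
= \tfrac{1}{\tau\sigma}\langle z - z^k,\, z^{k+1}-z^k\rangle - \tfrac{1}{\tau^2\sigma}\|z^{k+1}-z^k\|^2.
\]
The first term is then expanded via identity \eqref{identity-G} (with $G = I$), giving
\[
\tfrac{1}{\tau\sigma}\langle z-z^k,\, z^{k+1}-z^k\rangle
= \tfrac{1}{2\tau\sigma}\bigl(\|z-z^k\|^2 - \|z - z^{k+1}\|^2\bigr) + \tfrac{1}{2\tau\sigma}\|z^{k+1}-z^k\|^2.
\]
After combining and using $\|z^{k+1}-z^k\|^2 = \tau^2 \|\tilde z^{k+1}-z^k\|^2$, the coefficient of $\|\tilde z^{k+1}-z^k\|^2$ collects to $(\tau-2)/(2\sigma)$.

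Finally, adding the leftover term $\tfrac{1}{2\sigma}\|z^k - \tilde z^{k+1}\|^2$ on the left-hand side converts this coefficient into $(\tau-1)/(2\sigma)$, which matches the right-hand side of \eqref{z-relation} exactly. The whole argument is a short, deterministic calculation; I do not anticipate any obstacle beyond careful bookkeeping of the $\tau$-factors, and Assumption \ref{assump-fg-smooth} is not actually invoked — it is only listed because the iterates $z^{k+1}$ are defined through \eqref{Iter-New-2}, whose well-posedness depends on the standing smoothness hypothesis.
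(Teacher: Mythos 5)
Your proof is correct and is essentially the same computation as the paper's: both start from $z^{k+1}-z^k=\tau(\ztilde^{k+1}-z^k)$ and reduce the inner product to $\|z-z^k\|^2-\|z-z^{k+1}\|^2$ plus a multiple of $\|z^k-\ztilde^{k+1}\|^2$ via a polarization identity. The only cosmetic difference is that you expand $\langle z-z^k,\,z^{k+1}-z^k\rangle$ with the three-vector identity \eqref{identity-G}, whereas the paper applies the four-vector identity \eqref{identity} directly to $\langle z-\ztilde^{k+1},\,z^{k+1}-z^k\rangle$ and then uses $z^{k+1}-\ztilde^{k+1}=-(\tau-1)(z^k-\ztilde^{k+1})$; your coefficient bookkeeping ($(\tau-2)/(2\sigma)$ becoming $(\tau-1)/(2\sigma)$) checks out.
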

\begin{proof}
From \eqref{Iter-New-2} and \eqref{ADMM-notation}, we get
\[\label{relation-z-r} z^{k+1} - z^k =   \tau\sigma r^{k+1} \quad \hbox{and} \quad
\ztilde^{k+1} - z^k =  \sigma  r^{k+1}.   \]
It follows from \eqref{relation-z-r} that
\[\label{lam-ine}
\ztilde^{k+1} - z^k = \frac{1}{\tau}(z^{k+1} - z^k)
 \quad \hbox{and} \quad z^{k+1} - \ztilde^{k+1} = - (\tau - 1)(z^k - \ztilde^{k+1}).\]
By using the first equation in \eqref{lam-ine}, we obtain
\[\label{ADMM-lam}
  \frac{1}{\sigma}\langle z - \ztilde^{k+1}, \ztilde^{k+1} - z^k\rangle+ \frac{1}{2\sigma}\|z^k - \ztilde^{k+1}\|^2
   = \frac{1}{\tau\sigma} \langle z - \ztilde^{k+1}, z^{k+1} - z^k\rangle + \frac{1}{2\sigma}\|z^k - \ztilde^{k+1}\|^2.
\]
Now, by taking $u_1=z$, $u_2=\ztilde^{k+1}$, $v_1=z^{k+1}$ and $v_2=z^k$
and applying   the identity \eqref{identity} to the first term of the right-hand side of \eqref{ADMM-lam}, we obtain
\begin{eqnarray}\label{ADMM-lam2}
&  & \hspace{-0.7cm}
 \frac{1}{\sigma}\langle z - \ztilde^{k+1}, \ztilde^{k+1} - z^k\rangle + \frac{1}{2\sigma}\|z^k - \ztilde^{k+1}\|^2\nn \\
&  =& \frac{1}{2\tau\sigma}\Big(\|z^k - z\|^2 - \|z^{k+1} - z\|^2\Big) +
             \frac{1}{2\tau\sigma}\Big(\|z^{k+1} - \ztilde^{k+1}\|^2 - \|z^k - \ztilde^{k+1}\|^2  + \tau \|z^k    - \ztilde^{k+1}\|^2\Big). \qquad
\end{eqnarray}
By using the second equation in \eqref{lam-ine}, we have
\begin{eqnarray*}
&   & \hspace{-0.7cm}
 \|z^{k+1} - \ztilde^{k+1}\|^2 - \|z^k - \ztilde^{k+1}\|^2 + \tau \|z^k - \ztilde^{k+1}\|^2
\\
&=&
  (\tau - 1)^2\|z^k - \ztilde^{k+1}\|^2 - \|z^k - \ztilde^{k+1}\|^2 + \tau \|z^k - \ztilde^{k+1}\|^2
\;=\; \tau(\tau - 1)\|z^k - \ztilde^{k+1}\|^2,
\end{eqnarray*}
which, together with \eqref{ADMM-lam2}, proves  the assertion \eqref{z-relation}.
\end{proof}

%%%%%%%%%% lemma3.3
\begin{lemma} \label{lem-3.3}
Suppose   that  Assumption {\em\ref{assump-fg-smooth}}  holds.
Assume that
$
\frac{1}{2}\widehat{\Sigma}_g + T   \succeq 0.
$
Let $\{(x^k, y^k, z^k)\}$ be
generated by the Majorized iPADMM and for each $k$, let  $\xi_k$ and $r^k$ be  defined as in \eqref{Notation_dtp1} and \eqref{ADMM-notation}, respectively.
Then for any     $k \ge 1$, we
have
\begin{eqnarray}\label{x-y-relation}
&  &\hspace{-0.7cm}
 (1-\tau)\sigma\|r^{k+1}\|^2 +  \sigma \|A^*x^{k+1} + B^*y^k - c\|^2 \nn \\
&\geq & \max\big(1 - \tau, 1 - \tau^{-1}\big)\sigma\big(\|r^{k+1}\|^2  - \|r^k\|^2\big)+  \big( \xi_{k+1} - \xi_k\big)\nn \\
&  &   +\; \min\big(\tau,  1 + \tau - \tau^2
\big)\sigma\big(\tau^{-1}\|r^{k+1}\|^2  + \|B^*(y^{k+1} - y^k)\|^2
\big).
% +  \|y^{k+1} - y^k\|_{\widehat{\Sigma}_g + \frac{T}{2} - \alpha\Sigma_g}^2
%   - \|y^k - y^{k-1}\|_{\widehat{\Sigma}_g + \frac{T}{2} - \alpha\Sigma_g}^2.
%+\; \frac{(1 + \tau - \tau^2)\sigma}{2}\|B^*(y^{k+1} - y^k)\|^2
%    \\[5pt]
%&  &
%+\; 2\alpha\|y^{k+1} - y^k\|_{\Sigma_g}^2
% - \|y^{k+1} - y^k\|_{\widehat{\Sigma}_g}^2
% +  \|y^{k+1} - y^k\|_{\widehat{\Sigma}_g + \frac{T}{2} - \alpha\Sigma_g}^2
%   - \|y^k - y^{k-1}\|_{\widehat{\Sigma}_g + \frac{T}{2} - \alpha\Sigma_g}^2.
\end{eqnarray}
\end{lemma}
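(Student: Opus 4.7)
\noindent\textbf{Proof proposal for Lemma~\ref{lem-3.3}.}
The plan is to exploit the first-order optimality condition of the $y$-subproblem in \eqref{Iter-New-2} at two consecutive iterations $k$ and $k+1$, then invoke monotonicity of $\partial q$ so that the subgradient terms vanish and a telescoping quantity in $\xi_k$ emerges. The multiplier increment $\ztilde^{k+1}-\ztilde^k$ is then rewritten in terms of the residuals via the identities from Lemma~\ref{lem-3.2}, and a single cross term is controlled by a Young inequality whose weight is chosen case by case in $\tau$ to produce the $\max$ and $\min$ constants in \eqref{x-y-relation}.

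Concretely, optimality for $y^{k+1}$ yields $\eta^{k+1}\in\partial q(y^{k+1})$ with
\[ \eta^{k+1}+\nabla g(y^k)+(\widehat{\Sigma}_g+T)(y^{k+1}-y^k)+B\ztilde^{k+1}=0, \nn \]
and the analogous relation at iterate $k$ gives $\eta^k\in\partial q(y^k)$ satisfying the same identity with indices decreased by one. Subtracting the two, pairing with $y^{k+1}-y^k$, and dropping the nonnegative subdifferential term $\inprod{\eta^{k+1}-\eta^k}{y^{k+1}-y^k}\ge 0$ leaves
\[ \inprod{B(\ztilde^{k+1}-\ztilde^k)}{y^{k+1}-y^k}+\inprod{(\widehat{\Sigma}_g+T)[(y^{k+1}-y^k)-(y^k-y^{k-1})]}{y^{k+1}-y^k}\le\inprod{\nabla g(y^{k-1})-\nabla g(y^k)}{y^{k+1}-y^k}. \nn \]
The polarization identity \eqref{identity-G} applied to the middle term extracts $\tfrac12(\xi_{k+1}-\xi_k)+\tfrac12\|y^{k+1}-2y^k+y^{k-1}\|^2_{\widehat{\Sigma}_g+T}$, and under the hypothesis $\tfrac12\widehat{\Sigma}_g+T\succeq 0$ we have $\widehat{\Sigma}_g+T\succeq\tfrac12\widehat{\Sigma}_g\succeq 0$, so the last square is nonnegative and can be discarded.

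The gradient term is handled via \eqref{assump-Hessian} together with Clarke's mean-value theorem: there exists $W\in\partial^2 g$ with $0\preceq W\preceq\widehat{\Sigma}_g$ such that $\nabla g(y^k)-\nabla g(y^{k-1})=W(y^k-y^{k-1})$. A further use of \eqref{identity-G} on $\inprod{W(y^k-y^{k-1})}{y^{k+1}-y^k}$ together with the sandwich $W\preceq\widehat{\Sigma}_g$ absorbs the gradient contribution back into the $(\widehat{\Sigma}_g+T)$-quadratic using the hypothesis, leaving $\xi_{k+1}-\xi_k$ on the relevant side. For the multiplier increment I substitute $\ztilde^{k+1}-\ztilde^k=\sigma r^{k+1}+(\tau-1)\sigma r^k$, obtained by combining \eqref{relation-z-r} with $\ztilde^k=z^{k-1}+\sigma r^k$ from \eqref{ADMM-notation}, together with the algebraic identity $B^*(y^{k+1}-y^k)=r^{k+1}-(A^*x^{k+1}+B^*y^k-c)$. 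This turns $\inprod{B(\ztilde^{k+1}-\ztilde^k)}{y^{k+1}-y^k}$ into a linear combination of $\sigma\|r^{k+1}\|^2$, $\sigma\|r^k\|^2$, $\sigma\|A^*x^{k+1}+B^*y^k-c\|^2$ and $\sigma\|B^*(y^{k+1}-y^k)\|^2$, plus a single cross term $(\tau-1)\sigma\inprod{r^k}{B^*(y^{k+1}-y^k)}$.

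The main obstacle, and the step that gives rise to the $\max/\min$ constants in \eqref{x-y-relation}, is estimating this remaining cross term sharply. A direct computation shows the optimal bound depends on the sign of $\tau-1$: for $\tau\le 1$, Young's inequality with weight $1$ produces the coefficient pair $(1-\tau,\tau)$, while for $\tau>1$, weight $\tau^{-1}$ produces $(1-\tau^{-1},1+\tau-\tau^2)$. In each regime the chosen pair coincides with $\max(1-\tau,1-\tau^{-1})$ and $\min(\tau,1+\tau-\tau^2)$ respectively, so the two estimates combine into the single uniform inequality \eqref{x-y-relation}. Collecting all pieces and rearranging delivers the claim.
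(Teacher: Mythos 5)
Your proposal is correct and follows essentially the same route as the paper's proof: consecutive $y$-optimality conditions plus monotonicity of $\partial q$ and Clarke's mean value theorem to produce the telescoping term $\xi_{k+1}-\xi_k$, the substitution $\ztilde^{k+1}-\ztilde^k=\sigma r^{k+1}+(\tau-1)\sigma r^k$ to convert the multiplier increment into residuals, and a $\tau$-dependent Young inequality on the single cross term $(\tau-1)\sigma\inprod{r^k}{B^*(y^{k+1}-y^k)}$ yielding exactly the $\max/\min$ constants. The only delicate point you gloss over is the absorption of $\inprod{W^k(y^k-y^{k-1})}{y^{k+1}-y^k}$: you cannot first discard the square $\tfrac12\|y^{k+1}-2y^k+y^{k-1}\|^2_{\widehat{\Sigma}_g+T}$ and then absorb the gradient term, but rather must keep it and use the splitting $\widehat{\Sigma}_g+T-W^k=(\tfrac12\widehat{\Sigma}_g+T)+\tfrac12(\widehat{\Sigma}_g-W^k)-\tfrac12W^k$ together with $\|a-b\|^2_{W^k}\le 2\|a\|^2_{W^k}+2\|b\|^2_{W^k}$ (this is the role of the paper's operator $\widehat{\Sigma}_g-\tfrac12W^k+T$), after which the combination is indeed nonnegative and the argument closes.
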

\begin{proof}
%Since $\widehat{\Sigma}_g \succeq \Sigma_g \succeq 0$ and
%$\widehat{\Sigma}_g + T - \alpha\Sigma_g \succeq 0$ for some
%$\alpha \in [0, 1]$, we have that
%\[  \label{matrix-positive} \widehat{\Sigma}_g + \frac{1}{2}T -  \alpha\Sigma_g = \frac{1}{2}(\widehat{\Sigma}_g + T - \alpha\Sigma_g)
%+ \frac{1}{2}(\widehat{\Sigma}_g - \Sigma_g) +
%\frac{1-\alpha}{2}\Sigma_g\succeq 0.
%\]
Note that
\begin{eqnarray}\label{sum}
&& \hspace{-0.7cm}  (1-\tau)\sigma\|r^{k+1}\|^2 + \sigma \|A^*x^{k+1} + B^*y^k - c\|^2\nn \\
&   =  &  (1-\tau)\sigma\|r^{k+1}\|^2 +  \sigma \|(A^*x^{k+1} + B^*y^{k+1} - c) + B^*(y^k - y^{k+1})\|^2 \nn \\
&   =  &  (2-\tau)\sigma \|r^{k+1}\|^2 +  \sigma\|B^*(y^{k+1} - y^k)\|^2 + 2\sigma \langle B^*(y^k - y^{k+1}),  r^{k+1} \rangle.
\end{eqnarray}
First, we shall estimate the term $2\sigma \langle B^*(y^k - y^{k+1}),
r^{k+1}\rangle$ in \eqref{sum}.
%By setting $y= y^{k+1}$ and $y^\prime = y^k$ in \eqref{Majorize-g}, we have
%\[ g(y^{k+1}) \le g(y^k) + \langle y^{k+1} - y^k, \nabla g(y^k) \rangle + \frac{1}{2}\|y^{k+1} - y^k\|^2_{\widehat{\Sigma}_g}. \nn \]
%Setting $y^\prime = y^k$ in \eqref{gy}, we get
%\[  g(y) \ge g(y^k) + \langle y - y^k, \nabla g(y^k) \rangle + \frac{1}{2}\|y^k - y\|_{\Sigma_g}^2 \qquad \forall y \in {\cal Y}. \nn
% \]
%Combining the above two inequalities, we obtain
%\[ \label{upbound} g(y) - g(y^{k+1}) - \frac{1}{2}\|y^k - y\|_{\Sigma_g}^2 + \frac{1}{2}\|y^{k+1} - y^k\|^2_{\widehat{\Sigma}_g}
%      \ge  \langle y - y^{k+1}, \nabla g(y^k) \rangle \qquad \forall y \in {\cal Y}. \]
From the first-order optimality condition of \eqref{Iter-New-2} and
the notation of $\tilde{z}^{k+1}$ defined in \eqref{ADMM-notation}, we have
\begin{equation}\label{opt-con-new2}
\begin{array}{l}
-\nabla g(y^k) - B  \tilde{z}^{k+1} -   (\widehat{\Sigma}_g + T)(y^{k+1} - y^k) \in \partial q(y^{k+1}), \\[0.2cm]
-\nabla g(y^{k-1}) - B  \tilde{z}^k -   (\widehat{\Sigma}_g + T)(y^k
- y^{k-1}) \in \partial q(y^k).
\end{array}
\end{equation}
%Since $\nabla g$ is Lipschitz continuous, $\nabla^2 g$ exists almost
%everywhere.
From Clarke's Mean Value Theorem  \cite[Proposition 2.6.5]{Clarke90SIAM}, we know that
\[   \nabla g(y^k) - \nabla g(y^{k-1}) \in \hbox{conv} \big\{ \partial^2 g([y^{k-1}, y^k])(y^k - y^{k-1}) \big\}. \nn\]
Thus, there exists a self-adjoint and positive semidefinite linear operator $W^k \in \hbox{conv}\{ \partial^2 g([y^{k-1}, y^k])\}$ such that
\[ \label{subdif} \nabla g(y^k) - \nabla g(y^{k-1})  = W^k (y^k - y^{k-1}).  \]
From \eqref{opt-con-new2} and the maximal monotonicity of   $\partial q(\cdot)$, it follows that
\[ \langle y^k - y^{k+1}, [-\nabla g(y^{k-1}) - B  \tilde{z}^k -   (\widehat{\Sigma}_g + T)(y^k
- y^{k-1})] - [-\nabla g(y^k) - B  \tilde{z}^{k+1} -   (\widehat{\Sigma}_g + T)(y^{k+1} - y^k)]\rangle \ge 0, \nn\]
which, together with  \eqref{subdif}, gives rise to
\begin{eqnarray}\label{inequality-yz-1}
&  & \hspace{-0.7cm}
 2\langle y^k - y^{k+1}, B \ztilde^{k+1} - B \ztilde^k \rangle \nn \\
& \ge & 2\langle \nabla g(y^k) - \nabla g(y^{k-1}), y^{k+1} - y^k \rangle - 2\langle (\widehat{\Sigma}_g +T) (y^k - y^{k-1}), y^{k+1} - y^k \rangle
 +  2\|y^{k+1} - y^k\|_{\widehat{\Sigma}_g + T}^2 \nn \\
& = & 2\langle W^k (y^k  - y^{k-1}), y^{k+1} - y^k \rangle - 2\langle (\widehat{\Sigma}_g +T) (y^k - y^{k-1}), y^{k+1} - y^k \rangle
 +  2\|y^{k+1} - y^k\|_{\widehat{\Sigma}_g + T}^2 \nn \\
& = &  2\langle (\widehat{\Sigma}_g - W^k + T) (y^{k-1}  - y^k), y^{k+1} - y^k \rangle
 +  2\|y^{k+1} - y^k\|_{\widehat{\Sigma}_g + T}^2.
\end{eqnarray}
By using the first elementary identity in \eqref{identity-G} and $W^k \succeq 0$ , we have
\begin{eqnarray}
&    & \hspace{-0.7cm}  2\langle (\widehat{\Sigma}_g - W^k + T) (y^{k-1}  - y^k), y^{k+1} - y^k \rangle \nn \\
&  = &   \|y^{k+1} - y^k\|_{\widehat{\Sigma}_g -  W^k + T}^2 + \|y^k - y^{k-1}\|_{\widehat{\Sigma}_g -  W^k + T}^2
     - \|y^{k+1} - y^{k-1}\|_{\widehat{\Sigma}_g -  W^k + T }^2 \nn \\
& \ge &   \|y^{k+1} - y^k\|_{\widehat{\Sigma}_g -  W^k + T}^2 +  \|y^k - y^{k-1}\|_{\widehat{\Sigma}_g -  W^k + T}^2
     - \|y^{k+1} - y^{k-1}\|_{\widehat{\Sigma}_g - \frac{1}{2} W^k + T }^2. \nn
\end{eqnarray}
{}From \eqref{assump-Hessian}, we know that
$\widehat{\Sigma}_g - \frac{1}{2} W^k + T = \frac{1}{2} \widehat{\Sigma}_g + \frac{1}{2}(\widehat{\Sigma}_g-  W^k) + T
\succeq \frac{1}{2}\widehat{\Sigma}_g + T \succeq 0.$
Then, by
using the elementary inequality $\|u+v\|_G^2 \le 2\|u\|_G^2 + 2\|v\|_G^2$ for any self-adjoint and positive semidefinite linear
operator $G$, we get
\begin{eqnarray*}
\hspace{-0.7cm}- \|y^{k+1} - y^{k-1}\|_{\widehat{\Sigma}_g - \frac{1}{2} W^k + T }^2
& =  & - \|(y^{k+1} - y^k) + (y^k - y^{k-1})\|_{\widehat{\Sigma}_g - \frac{1}{2} W^k + T }^2\nn\\
&\ge & - 2\Big(\|y^{k+1} - y^k\|_{\widehat{\Sigma}_g - \frac{1}{2} W^k + T}^2
    +  \|y^k - y^{k-1}\|_{\widehat{\Sigma}_g - \frac{1}{2} W^k + T}^2\Big).
\end{eqnarray*}
Substituting the above inequalities into \eqref{inequality-yz-1}, we obtain
\begin{eqnarray}\label{inequality-yz}
  \hspace{-0.7cm}
 2\langle y^k - y^{k+1}, B \ztilde^{k+1} - B \ztilde^k \rangle
& \ge &   - \|y^{k+1} - y^k\|_{\widehat{\Sigma}_g  + T}^2  - \|y^k - y^{k-1}\|_{\widehat{\Sigma}_g  + T}^2
+ 2 \|y^{k+1} - y^k\|_{\widehat{\Sigma}_g  + T}^2 \nn \\
&  = &  \|y^{k+1} - y^k\|_{\widehat{\Sigma}_g+T}^2 -  \|y^k - y^{k-1}\|_{\widehat{\Sigma}_g+T}^2.
\end{eqnarray}
Thus, by letting $\mu_{k+1}: = (1 - \tau)\sigma\langle B^*(y^k -
y^{k+1}), r^k \rangle$, and using $\sigma r^{k+1} = (1 - \tau)\sigma
r^k+ \ztilde^{k+1} - \ztilde^k$ (see \eqref{Iter-New-2} and
\eqref{ADMM-notation}) and \eqref{inequality-yz}, we have
\begin{eqnarray}\label{ine-Byy}
&  & \hspace{-0.7cm}
 2\sigma \langle B^*(y^k - y^{k+1}), r^{k+1} \rangle \;=\; 2\mu_{k+1} + 2\langle  y^k - y^{k+1}, B\ztilde^{k+1} - B\ztilde^k \rangle \nn \\[5pt]
& \ge & 2\mu_{k+1} + \|y^{k+1} - y^k\|_{\widehat{\Sigma}_g+T}^2 -  \|y^k - y^{k-1}\|_{\widehat{\Sigma}_g+T}^2. \qquad
\end{eqnarray}
Since $\tau \in (0, +\infty)$, from the definition of $\mu_{k+1}$,
we obtain
\[
2\mu_{k+1} \ge \left\{
\begin{array}{ll}
   - (1 - \tau)\sigma \|B^*(y^{k+1} - y^k)\|^2 -  (1 - \tau)\sigma \|r^k\|^2 & \hbox{if} \; \tau \in (0,1], \\[8pt]
(1- \tau)\sigma  \tau\| B^*(y^{k+1} - y^k)\|^2
+ (1- \tau)\sigma\tau^{-1} \|r^k\|^2 &  \hbox{if} \; \tau \in
(1, +\infty),
\end{array}
\right.\nn\]
which, together with \eqref{sum},
\eqref{ine-Byy} and the notation of $\xi_k$, shows that
\eqref{x-y-relation} holds. This completes the proof.
\end{proof}

Now we are ready to present an inequality from which  an upper
bound for $\theta(\tilde{u}^{k+1}) - \theta(u) + \langle
\tilde{w}^{k+1} - w, F(w)\rangle $ (i.e.,
$\big(p({x}^{k+1})+q({y}^{k+1})\big) - \big(p(x) + q(y)\big) +
\langle {x}^{k+1} - x, \nabla f(x) + A z \rangle  + \langle
{y}^{k+1} - y, \nabla g(y) + B z \rangle + \langle \ztilde^{k+1} -
z, -(A^*x + B^*y - c) \rangle$) with $\tilde{u}^{k+1} = (x^{k+1},
y^{k+1})$ and $\tilde{w}^{k+1} = (x^{k+1}, y^{k+1}, \ztilde^{k+1})$
can be  found for all $w = (x, y, z) \in {\cal W}$. This inequality
is also crucial for analyzing the iteration-complexity for the
sequence generated by the Majorized iPADMM.

%%%%%%%%%% proposition 3.1
\begin{proposition}
Suppose that Assumption {\em\ref{assump-fg-smooth}}  holds. Let
$\{(x^k, y^k, z^k)\}$ be generated by the Majorized iPADMM. For each
$k$ and $(x, y, z) \in {\cal X} \times {\cal Y} \times {\cal Z}$,
let  $\phi_k(x, y, z)$, $\xi_{k+1}$, $s_{k+1}$, $t_{k+1}$, $r^{k}$
and $ \ztilde^{k+1}$ be defined as in \eqref{Notation_dtp0},
\eqref{Notation_dtp1} and \eqref{ADMM-notation}. Then the following
results hold:
\begin{itemize}
\item[$(a)$] For any  $k \ge 0$ and $(x, y, z) \in {\cal X} \times {\cal Y} \times {\cal Z}$, we have
\begin{eqnarray}\label{case-I0}
  &  &  \hspace{-0.7cm}
\big(p({x}^{k+1})+q({y}^{k+1})\big) - \big(p(x) + q(y)\big) +
\langle {x}^{k+1} - x, \nabla f(x) + A z \rangle
  + \langle {y}^{k+1} - y, \nabla g(y) + B z \rangle \nn \\
  &  &  + \langle \ztilde^{k+1} - z, -(A^*x + B^*y - c) \rangle
+ \frac{1}{2}\big(\phi_{k+1}(x, y, z)  - \phi_k(x, y, z)\big) \nn \\
  & \le  &
- \frac{1}{2}\Big( s_{k+1} + (1 - \tau)\sigma \|r^{k+1}\|^2 +
  \sigma \|A^*x^{k+1} + B^*y^k - c\|^2  \Big).
\end{eqnarray}

\item[$(b)$] Assume it holds that
$
\frac{1}{2}\widehat{\Sigma}_g + T \succeq 0.
$
Then for any $\alpha \in (0, 1]$,
$k \ge 1$ and $(x, y, z) \in {\cal X} \times {\cal Y} \times {\cal
Z}$, we have
\begin{eqnarray}\label{case-II0}
 &  & \hspace{-0.7cm}
 \big(p({x}^{k+1})+q({y}^{k+1})\big) - \big(p(x) + q(y)\big) + \langle {x}^{k+1} - x, \nabla f(x) + A z \rangle
  + \langle {y}^{k+1} - y, \nabla g(y) + B z \rangle  \nn \\
 & &   +\, \langle \ztilde^{k+1} - z, -(A^*x + B^*y - c) \rangle
 + \frac{1}{2}\Big\{\big[\phi_{k+1}(x, y, z) + \big(1 - \alpha\min(\tau, \tau^{-1})\big) \sigma\|r^{k+1}\|^2
\nn \\
& &  +
  \alpha\xi_{k+1}\big] - \big[\phi_k(x, y, z)+ \big(1 - \alpha\min(\tau, \; \tau^{-1})\big)\sigma\|r^k\|^2 + \alpha\xi_k\big]\Big\}
\nn \\
& \le & - \frac{1}{2} \Big\{ t_{k+1} + \big[-\tau +\alpha
\min(1+\tau, \; 1+\tau^{-1})
  \big]\sigma\|r^{k+1}\|^2
\Big\}.
\end{eqnarray}
\end{itemize}
\end{proposition}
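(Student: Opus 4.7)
The plan is to prove part (a) by directly manipulating the first-order optimality conditions for the two subproblems in \eqref{Iter-New-2} and then to derive part (b) from part (a) by invoking Lemma~3.3. Introduce the auxiliary vector $\hat z^{k+1}:=z^k+\sigma(A^*x^{k+1}+B^*y^k-c)$, which is related to $\tilde z^{k+1}$ by $\hat z^{k+1}-\tilde z^{k+1}=\sigma B^*(y^k-y^{k+1})$. The first-order optimality conditions then read
\[ -\nabla f(x^k)-(\widehat{\Sigma}_f+S)(x^{k+1}-x^k)-A\hat z^{k+1}\in\partial p(x^{k+1}),\qquad -\nabla g(y^k)-(\widehat{\Sigma}_g+T)(y^{k+1}-y^k)-B\tilde z^{k+1}\in\partial q(y^{k+1}). \]

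For part (a), split the quantity of interest into $L_x:=p(x^{k+1})-p(x)+\langle x^{k+1}-x,\nabla f(x)+Az\rangle$, its symmetric analogue $L_y$, and $L_z:=\langle\tilde z^{k+1}-z,-(A^*x+B^*y-c)\rangle$. For $L_x$, apply the convexity of $p$ against the first subgradient inclusion and then convert $\nabla f(x^k)$ to $\nabla f(x)$ by chaining the majorization $f(x^{k+1})\le\hat f(x^{k+1};x^k)$ with the convexity lower bounds of $f$ at $x^k$ and at $x^{k+1}$ (with modulus $\Sigma_f$). Identity \eqref{identity-G} applied to $\langle x^{k+1}-x,(\widehat{\Sigma}_f+S)(x^{k+1}-x^k)\rangle$ then produces the telescope $\tfrac12(\|x^k-x\|^2_{\widehat{\Sigma}_f+S}-\|x^{k+1}-x\|^2_{\widehat{\Sigma}_f+S})-\tfrac12\|x^{k+1}-x^k\|^2_{\widehat{\Sigma}_f+S}$; handle $L_y$ symmetrically. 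Next, the dual coupling $\langle A(\hat z^{k+1}-z),x-x^{k+1}\rangle+\langle B(\tilde z^{k+1}-z),y-y^{k+1}\rangle+L_z$ collapses, after $A^*x+B^*y$ cancellations and the substitution $\hat z^{k+1}-\tilde z^{k+1}=\sigma B^*(y^k-y^{k+1})$, to $-\langle\tilde z^{k+1}-z,r^{k+1}\rangle+\sigma\langle B^*(y^k-y^{k+1}),A^*(x-x^{k+1})\rangle$. Lemma~3.2 dispatches the first term (giving the $z$-telescope plus $\tfrac{\tau-1}{2}\sigma\|r^{k+1}\|^2$), and identity \eqref{identity}, with the choice $u_1,u_2$ built from $-A^*x^{k+1}+c,-A^*x+c$ and $v_1,v_2$ from $B^*y^{k+1},B^*y^k$, transforms the second term into the $\|A^*x+B^*y^{\bullet}-c\|^2$-telescope plus $\tfrac\sigma2\|r^{k+1}\|^2-\tfrac\sigma2\|A^*x^{k+1}+B^*y^k-c\|^2$. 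Summing everything, the telescoping pieces assemble $\tfrac12(\phi_{k+1}-\phi_k)$, and the remaining quantities match the RHS of \eqref{case-I0}; the few extra non-positive $\Sigma_f$- and $\Sigma_g$-quadratics in $x-x^\bullet,y-y^\bullet$ generated along the way dominate the half-moduli $\tfrac12\Sigma_f,\tfrac12\Sigma_g$ appearing inside $s_{k+1}$ by the parallelogram-type bound $\|u\|^2_G\le 2\|u-v\|^2_G+2\|v\|^2_G$.

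For part (b), the added hypothesis $\tfrac12\widehat{\Sigma}_g+T\succeq 0$ is precisely what Lemma~3.3 requires. Starting from the estimate in (a), split $-\tfrac\sigma2\|A^*x^{k+1}+B^*y^k-c\|^2$ as $-\tfrac{\alpha\sigma}{2}\|\cdot\|^2-\tfrac{(1-\alpha)\sigma}{2}\|\cdot\|^2$. Bound the $\alpha$-piece from below using Lemma~3.3 scaled by $\alpha/2$, producing the four terms $\tfrac{\alpha}{2}\sigma\|r^{k+1}\|^2$, $-\tfrac{\alpha(1-\min(\tau,\tau^{-1}))}{2}\sigma\|r^k\|^2$, $\tfrac{\alpha}{2}(\xi_{k+1}-\xi_k)$, and $\tfrac{\alpha\min(\tau,1+\tau-\tau^2)}{2}\sigma\|B^*(y^{k+1}-y^k)\|^2$; bound the $(1-\alpha)$-piece using the elementary inequality $\|a+b\|^2\ge\tfrac12\|a\|^2-\|b\|^2$ with $a=A^*(x^{k+1}-x^k)$ and $b=r^k$, producing $\tfrac{(1-\alpha)\sigma}{4}\|A^*(x^{k+1}-x^k)\|^2-\tfrac{(1-\alpha)\sigma}{2}\|r^k\|^2$. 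The two quadratic contributions in $\|A^*(x^{k+1}-x^k)\|^2$ and $\|B^*(y^{k+1}-y^k)\|^2$ are exactly $\tfrac12(t_{k+1}-s_{k+1})$ by the definitions of $H_f$ and $M_g$ in \eqref{H-M}. The main obstacle is the coefficient bookkeeping: the $\|r^{k+1}\|^2$-coefficient must collapse to $\tfrac{\alpha}{2}$ in both regimes $\tau\le 1$ and $\tau>1$ (which ultimately reduces to the identity $\min(1+\tau,1+\tau^{-1})-\min(\tau,\tau^{-1})=1$ together with a case analysis of $\min(\tau,1+\tau-\tau^2)$), and the two sources of $\|r^k\|^2$-coefficients must add to $-\tfrac{1-\alpha\min(\tau,\tau^{-1})}{2}$ to match the LHS telescoping in \eqref{case-II0}.
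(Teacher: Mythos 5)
Your proposal is correct and follows essentially the same route as the paper: part (a) combines the first-order optimality conditions with the majorization/convexity chain, the identities \eqref{identity} and \eqref{identity-G}, and Lemma \ref{lem-3.2}, while part (b) splits the cross term into $\alpha$ and $(1-\alpha)$ pieces handled by Lemma \ref{lem-3.3} and a Cauchy--Schwarz bound, exactly as in \eqref{part1}--\eqref{part-whole}. The only differences are organizational (you split just the $\|A^*x^{k+1}+B^*y^k-c\|^2$ term rather than the whole right-hand side, which is algebraically equivalent), and your coefficient bookkeeping, including the identity $\min(1+\tau,1+\tau^{-1})-\min(\tau,\tau^{-1})=1$, checks out.
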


\begin{proof}By setting $x= x^{k+1}$ and $x^\prime = x^k$ in \eqref{Majorize-f}, we have
\[ f(x^{k+1}) \le f(x^k) + \langle x^{k+1} - x^k, \nabla f(x^k) \rangle + \frac{1}{2}\|x^{k+1} - x^k\|^2_{\widehat{\Sigma}_f}. \nn \]
Setting $x^\prime = x^k$ in \eqref{convex-f}, we get
\[  f(x) \ge f(x^k) + \langle x - x^k, \nabla f(x^k) \rangle + \frac{1}{2}\|x^k - x\|_{\Sigma_f}^2 \qquad \forall x \in {\cal X}. \nn
 \]
Combining the above two inequalities, we obtain
\[ \label{upbound} f(x) - f(x^{k+1}) - \frac{1}{2}\|x^k - x\|_{\Sigma_f}^2 + \frac{1}{2}\|x^{k+1} - x^k\|^2_{\widehat{\Sigma}_f}
      \ge  \langle x - x^{k+1}, \nabla f(x^k) \rangle \qquad \forall x \in {\cal X}. \]
From the first-order optimality condition of \eqref{Iter-New-2}, for
any $x \in {\cal X}$, we have
\[\label{opt-con} p(x) - p(x^{k+1})
+ \langle x - x^{k+1},  \nabla f(x^k) + A [z^k + \sigma(A^*x^{k+1} +
B^*y^k - c)] + (\widehat{\Sigma}_f + S)(x^{k+1} - x^k) \rangle \ge 0.   \]
Substituting \eqref{upbound} into \eqref{opt-con}, we get
\begin{eqnarray}\label{fxpx}
 && \hspace{-0.7cm}  \big(p(x) + f(x)\big) - \big(p(x^{k+1}) + f(x^{k+1})\big) \nn \\
&&  + \big\langle x - x^{k+1},    A [z^k + \sigma(A^*x^{k+1} +
B^*y^k - c)]
 + (\widehat{\Sigma}_f + S)(x^{k+1} - x^k) \big\rangle \nn \\
 &\geq  & \frac{1}{2}\|x^k - x\|_{\Sigma_f}^2 - \frac{1}{2}\|x^{k+1} - x^k\|^2_{\widehat{\Sigma}_f} \qquad \forall x \in {\cal X}.
\end{eqnarray}
Using the similar derivation as  to get
\eqref{fxpx},  we have for any $y \in {\cal Y}$,
\begin{eqnarray}\label{gyqy}
 &  &\hspace{-0.7cm} \big(q(y) + g(y)\big) - \big(q(y^{k+1}) + g(y^{k+1})\big)\nn \\
&&  + \big\langle y - y^{k+1},   B [z^k + \sigma(A^*x^{k+1}
 + B^* y^{k+1} -c )]
     + (\widehat{\Sigma}_g + T)(y^{k+1} - y^k) \big\rangle \nn \\[5pt]
 &  \geq &  \frac{1}{2}\|y^k - y\|_{\Sigma_g}^2 - \frac{1}{2}\|y^{k+1} - y^k\|^2_{\widehat{\Sigma}_g}  \qquad \forall y \in {\cal Y}.
\end{eqnarray}
Note that $ \ztilde^{k+1} =  z^k + \sigma r^{k+1}$, where $r^{k+1} =
A^* x^{k+1}+B^* y^{k+1}-c$. Then we have
\[ z^k + \sigma (A^* x^{k+1} + B^* y^k - c) = \ztilde^{k+1}   +
        \sigma B^* (y^k - y^{k+1}).\nn \]
Adding up \eqref{fxpx} and \eqref{gyqy}, and using the above
equation,  we have for any $(x, y) \in  {\cal X} \times {\cal Y}$,
\begin{eqnarray}
&  &\hspace{-0.7cm}
 \big(p(x) + f(x) + q(y) + g(y)\big) - \big(p(x^{k+1}) + f(x^{k+1})
+ q(y^{k+1}) + g(y^{k+1})\big) \nn \\
&  & + \big\langle x - x^{k+1},
  A\ztilde^{k+1} + \sigma A B^*(y^k - y^{k+1}) + (\widehat{\Sigma}_f + S)(x^{k+1} - x^k) \big\rangle \nn \\
&  & + \big\langle y - y^{k+1},  B\ztilde^{k+1} + (\widehat{\Sigma}_g + T)(y^{k+1} - y^k) \big\rangle \nn \\
&  \ge &
\frac{1}{2}\|x^k - x\|_{\Sigma_f}^2 - \frac{1}{2}\|x^{k+1} - x^k\|^2_{\widehat{\Sigma}_f}
     +  \frac{1}{2}\|y^k - y\|_{\Sigma_g}^2 - \frac{1}{2}\|y^{k+1} -
     y^k\|^2_{\widehat{\Sigma}_g}. \label{ine-pfqg1}
\end{eqnarray}
Now setting $x = x^{k+1}$, $x^\prime = x$ in \eqref{convex-f}, and
$y = y^{k+1}$, $y^\prime = y$ in \eqref{gy}, we get
\begin{eqnarray}
 &   f(x^{k+1}) - f(x) + \langle x - x^{k+1}, \nabla f(x) \rangle \ge \frac{1}{2}\|x^{k+1} - x\|^2_{\Sigma_f} &
\forall x \in  {\cal X}, \label{ine-f} \\[5pt]
 &  g(y^{k+1}) - g(y) + \langle y - y^{k+1}, \nabla g(y) \rangle \ge \frac{1}{2}\|y^{k+1} - y\|^2_{\Sigma_g}
 & \forall y \in  {\cal Y}. \label{ine-g}
\end{eqnarray}
Let
$$
\Delta^k(x,y,z) := \langle x - x^{k+1}, A(\ztilde^{k+1}  -z) +
\sigma A B^*(y^k - y^{k+1})  \rangle
   + \langle y - y^{k+1},  B(\ztilde^{k+1} -z) \rangle.
$$
Adding up \eqref{ine-pfqg1}, \eqref{ine-f} and \eqref{ine-g}, and
using the elementary inequality
$\frac{1}{2}\|u\|_G^2 + \frac{1}{2}\|v\|_G^2
 \ge \frac{1}{4}\|u - v\|_G^2$
 for  any  self-adjoint and
positive semidefinite linear operator $G$, we have
\begin{eqnarray}\label{sum-ine}
&  & \hspace{-0.7cm} \big(p(x)  + q(y) \big) - \big(p(x^{k+1}) +
q(y^{k+1})  \big) + \big\langle x - x^{k+1}, \nabla f(x) + A z
\big\rangle +
\big\langle y - y^{k+1}, \nabla g(y) + B z \big\rangle \nn \\
&  & +\; \Delta^k(x,y,z) + \big\langle x - x^{k+1}, (\widehat{\Sigma}_f + S)(x^{k+1} - x^k) \big\rangle   + \big\langle y - y^{k+1},  (\widehat{\Sigma}_g + T)(y^{k+1} - y^k) \big\rangle
\nn \\[5pt]
& \geq &  \frac{1}{2}\|x^k - x\|_{\Sigma_f}^2 + \frac{1}{2}\|x^{k+1} - x\|_{\Sigma_f}^2
+  \frac{1}{2}\|y^k - y\|_{\Sigma_g}^2  + \frac{1}{2}\|y^{k+1} - y\|_{\Sigma_g}^2
  \nn \\
&  & - \frac{1}{2}\|x^{k+1} - x^k\|^2_{\widehat{\Sigma}_f} - \frac{1}{2}\|y^{k+1} - y^k\|^2_{\widehat{\Sigma}_g} \nn \\[5pt]
&\geq  &
   \frac{1}{4}\|x^{k+1} - x^k \|_{\Sigma_f}^2  + \frac{1}{4}\|y^{k+1} - y^k\|_{\Sigma_g}^2
 - \frac{1}{2}\|x^{k+1} - x^k\|^2_{\widehat{\Sigma}_f} - \frac{1}{2}\|y^{k+1} - y^k\|^2_{\widehat{\Sigma}_g}.
\end{eqnarray}
By simple manipulations, we have
\begin{eqnarray}\label{ine-ABc}
&  & \hspace{-0.7cm}
\Delta^k(x,y,z)
 = \langle A^*({x}^{k+1} - x), z - \ztilde^{k+1}  + \sigma B^*({y}^{k+1} - y^k) \rangle
    + \langle B^*({y}^{k+1} - y),  z - \ztilde^{k+1} \rangle  \nn \\[5pt]
& =&   - \langle A^*x + B^*y - c, z - \ztilde^{k+1} \rangle +\langle
r^{k+1}, z - \ztilde^{k+1} \rangle + \sigma \langle A^*(x^{k+1}- x),
B^*(y^{k+1} - y^k) \rangle
\end{eqnarray}
and
\[\label{ADMM-0A-1}
  \sigma \langle  A^*(x^{k+1}- x),\;  B^*(y^{k+1} - y^k) \rangle
  =   \sigma \langle  (A^*x - c)-(A^* x^{k+1} - c),\;
(-B^*y^{k+1})- (-B^* y^k) \rangle.
\]
Now, by taking $u_1=  A^*x - c$, $u_2= A^* x^{k+1} - c$,
$v_1=-B^* y^{k+1}$ and $v_2=-B^* y^k$
and applying  the identity \eqref{identity}  to the right-hand side of \eqref{ADMM-0A-1}, we obtain
\begin{eqnarray*}
  \sigma \langle  A^*(x^{k+1}- x),\;  B^*(y^{k+1} - y^k) \rangle
&  = &  \frac{\sigma}{2}\big( \|A^*x + B^*y^k - c\|^2 - \|A^*x + B^*y^{k+1} - c\|^2\big) \nn \\
&  &  + \frac{\sigma}{2}\big( \|A^*x^{k+1} + B^*y^{k+1} - c\|^2 - \|A^*x^{k+1} + B^*y^k - c\|^2\big).
\end{eqnarray*}
Substituting this into \eqref{ine-ABc} and using the definition of $\ztilde^{k+1}$, we have
\begin{eqnarray}\label{ine-ABc2}
&  & \hspace{-0.7cm} \Delta^k(x,y,z)
 =  \langle z - \ztilde^{k+1}, -(A^*x + B^*y - c) \rangle + \frac{1}{\sigma}\langle z - \ztilde^{k+1}, \ztilde^{k+1}  - z^k \rangle
 + \frac{1}{2\sigma} \|z^k - \ztilde^{k+1}\|^2   \nn \\
&  & - \frac{\sigma}{2}\|A^*x^{k+1} + B^*y^k - c\|^2
+ \frac{\sigma}{2}\big( \|A^*x + B^*y^k - c\|^2 - \|A^*x + B^*y^{k+1} - c\|^2\big).
\end{eqnarray}
Applying \eqref{z-relation} in Lemma \ref{lem-3.2} to \eqref{ine-ABc2}, we get
\begin{eqnarray}\label{ine-imp}
&  & \hspace{-0.7cm}
\Delta^k(x,y,z)
 =  \langle z - \ztilde^{k+1}, -(A^*x + B^*y - c) \rangle
+ \frac{1}{2\tau\sigma}\big(\|z^k - z\|^2 - \|z^{k+1} - z\|^2\big)
  + \frac{\tau - 1}{2\sigma}\|z^k - \ztilde^{k+1}\|^2 \nn \\
&  & - \frac{\sigma}{2}\|A^*x^{k+1} + B^*y^k - c\|^2
+ \frac{\sigma}{2}\big( \|A^*x + B^*y^k - c\|^2 - \|A^*x + B^*y^{k+1} - c\|^2\big).
\end{eqnarray}
Using the second elementary identity in \eqref{identity-G}, we obtain  that
\begin{eqnarray*}
&  &\hspace{-0.7cm} \langle x - x^{k+1}, (\widehat{\Sigma}_f + S)(x^{k+1} - x^k) \rangle  + \langle y - y^{k+1},
 (\widehat{\Sigma}_g + T)(y^{k+1} - y^k) \rangle \nn \\
& = &  \frac{1}{2}\big(\|x^k - x\|_{\widehat{\Sigma}_f + S}^2 - \|x^{k+1} - x\|_{\widehat{\Sigma}_f + S}^2\big)
  -  \frac{1}{2}\|x^{k+1} - x^k\|_{\widehat{\Sigma}_f + S}^2 \nn \\
&  &   + \frac{1}{2}\big(\|y^k - y\|_{\widehat{\Sigma}_g + T}^2 - \|y^{k+1} - y\|_{\widehat{\Sigma}_g + T}^2\big)
  -  \frac{1}{2}\|y^{k+1} - y^k\|_{\widehat{\Sigma}_g + T}^2.
\end{eqnarray*}
Substituting this and \eqref{ine-imp} into \eqref{sum-ine}, and using the definitions of
$\ztilde^{k+1}$ and $r^{k+1}$, we have
\begin{eqnarray}\label{contra1-4blo}
&  & \hspace{-0.8cm}
\big(p(x)  + q(y) \big) - \big(p({x}^{k+1})
+ q({y}^{k+1})  \big) + \langle x - {x}^{k+1}, \nabla f(x)  + A z \rangle + \langle y - {y}^{k+1}, \nabla g(y) + B z \rangle \nn \\
&  & + \langle z - \ztilde^{k+1}, -(A^*x + B^*y - c) \rangle +
\frac{1}{2\tau\sigma}\big(\|z^k - z\|^2 - \|z^{k+1} - z\|^2\big)
  \nn \\
&  &
+ \frac{1}{2}\big(\|x^k - x\|_{\widehat{\Sigma}_f + S}^2
 - \|x^{k+1} - x\|_{\widehat{\Sigma}_f + S}^2\big)  +  \frac{\sigma}{2}\big( \|A^*x + B^*y^k - c\|^2 - \|A^*x + B^*y^{k+1} - c\|^2\big) \nn \\
&  &     + \frac{1}{2}\big(\|y^k - y\|_{\widehat{\Sigma}_g + T}^2 - \|y^{k+1} - y\|_{\widehat{\Sigma}_g + T}^2\big)
\nn \\[5pt]
& \geq &  \hspace{-0.2cm}
\frac{1}{2} \Big(
{(1 - \tau)\sigma}\|r^{k+1}\|^2  + {\sigma}\|A^*x^{k+1} + B^*y^k - c\|^2
  +  \|x^{k+1} - x^k\|_{\frac{1}{2}\Sigma_f + S}^2 + \|y^{k+1} - y^k \|_{ \frac{1}{2}\Sigma_g+T}^2\Big).  \qquad
\end{eqnarray}

%Now we are ready to consider the two cases for (a) and (b).
%$\tau
%\in (0, +\infty)$; and (b) $\frac{1}{2}\widehat{\Sigma}_g + T \succeq 0$ and $\tau \in (0, +\infty)$, respectively.

Now we can  get
\eqref{case-I0} from \eqref{contra1-4blo} immediately by
%{\bf Part (a)} $\tau \in (0, +\infty)$.
using the notation in
\eqref{Notation_dtp0} and \eqref{Notation_dtp1}. So Part (a) is proved.

%{\bf Part (b)} $\frac{1}{2}\widehat{\Sigma}_g + T \succeq 0$ and
%$\tau \in (0, +\infty)$.
To prove Part (b),  assume that
$
\frac{1}{2}\widehat{\Sigma}_g + T \succeq 0.
$
Using the definition of $r^k$ and the
Cauchy-Schwarz inequality, we get
\begin{eqnarray*}
&  &  \hspace{-0.7cm}
 \sigma \|A^* x^{k+1} + B^* y^k - c\|^2 \;=\;  \sigma \|r^{k} + A^* (x^{k+1} - x^k)\|^2  \nn \\[5pt]
& = &   \sigma \|r^k\|^2 +  \sigma \|A^* (x^{k+1} - x^k)\|^2
+ 2\sigma \langle A^* (x^{k+1} - x^k),  r^k \rangle \nn \\
& \ge &   (1 -2)\sigma \|r^k\|^2 +  (1 -
\frac{1}{2})\sigma \|A^* (x^{k+1} - x^k)\|^2 \nn \\
&=  &      - \sigma \|r^k\|^2 + \frac{1}{2} \sigma \|A^* (x^{k+1} -
x^k)\|^2.
\end{eqnarray*}
By using the definition of $s_{k+1}$, $ r^{k+1} =
(\tau\sigma)^{-1}(z^{k+1} - z^k)$ and the above formula, for any
$\alpha \in (0, 1]$, we get
\begin{eqnarray}\label{part1}
&  & \hspace{-0.7cm} (1-\alpha)\big[s_{k+1} + (1 - \tau)\sigma
\|r^{k+1}\|^2 +
  \sigma \|A^* x^{k+1} + B^* y^k - c\|^2\big]  \nn \\[5pt]
& \ge &  - (1-\alpha)\tau \sigma \|r^{k+1}\|^2 +
 (1-\alpha) \|x^{k+1} - x^k\|_{\frac{1}{2}\Sigma_f + S + \frac{1}{2}\sigma AA^*}^2 + (1-\alpha)\|y^{k+1} - y^k\|^2_{\frac{1}{2}\Sigma_g +
  T} \nn \\
&  & +   (1-\alpha)\sigma\big(\|r^{k+1}\|^2 - \|r^k\|^2 \big).
\end{eqnarray}
By using the definition of $s_{k+1}$
%, $ r^{k+1} =
%(\tau\sigma)^{-1}(z^{k+1} - z^k)$
 and \eqref{x-y-relation} in Lemma \ref{lem-3.3}, for any
$\alpha \in (0, 1]$, we have
\begin{eqnarray}\label{part2}
&  & \hspace{-0.7cm}  \alpha \big[s_{k+1} + (1 - \tau)\sigma
\|r^{k+1}\|^2 +
  \sigma \|A^* x^{k+1} + B^* y^k - c\|^2\big]  \nn \\[5pt]
& \ge & \alpha \|x^{k+1} - x^k\|_{\frac{1}{2}\Sigma_f + S}^2 +
 \alpha \|y^{k+1} - y^k\|^2_{\frac{1}{2}\Sigma_g +
  T}  + \max\big(1 - \tau, 1 - \tau^{-1}\big)\sigma\alpha\big(\|r^{k+1}\|^2  - \|r^k\|^2\big)\nn \\
&  &  + \;  \alpha\big( \xi_{k+1} - \xi_k\big) +\min\big(\tau,  1 +
\tau - \tau^2 \big)\sigma\alpha\big(\tau^{-1}\|r^{k+1} \|^2 +
\|B^*(y^{k+1} - y^k)\|^2 \big).
\end{eqnarray}
Adding up \eqref{part1} and \eqref{part2}, we obtain for any $\alpha \in (0, 1]$ that
\begin{eqnarray}\label{part-whole}
&  & \hspace{-0.7cm}   s_{k+1} + (1 - \tau)\sigma \|r^{k+1}\|^2 +
  \sigma \|A^* x^{k+1} + B^* y^k - c\|^2   \nn \\[5pt]
& \ge &  \|x^{k+1} - x^k\|_{H_f}^2 +  \|y^{k+1} - y^k\|^2_{M_g}  + \big(1-\alpha \min(\tau, \;\tau^{-1})\big)\sigma\big(\|r^{k+1}\|^2  - \|r^k\|^2\big)\nn \\
&  &  + \;  \alpha\big( \xi_{k+1} - \xi_k\big) +
\big(-\tau+ \alpha\min(1 + \tau,\; 1 + \tau^{-1}) \big)
\sigma\|r^{k+1}\|^2.
\end{eqnarray}
 Using the notation in
\eqref{Notation_dtp0} and \eqref{Notation_dtp1}, we know from
\eqref{contra1-4blo} and \eqref{part-whole} that \eqref{case-II0}
holds. The proof is completed.
\end{proof}

\begin{remark} Suppose that $B$ is vacuous, $q \equiv 0$ and $g
\equiv 0$. Then for any $\tau \in (0, +\infty)$ and $k \ge 0$, we
have $y^{k+1} = y^0 = \bar{y}$. Since $B$ is vacuous, by using the
definition of $r^{k+1}$, we have
\begin{equation*}  (1 - \tau)\sigma\|r^{k+1}\|^2 + \sigma  \|A^*x^{k+1} + B^*y^k - c\|^2 =
     (2 - \tau)\sigma\|r^{k+1}\|^2.
\end{equation*}
By observing that the terms concerning $y$ in \eqref{contra1-4blo}
cancel out, we can easily from \eqref{contra1-4blo} and the above
equation to get
\begin{eqnarray}\label{case-III0}
&  & \hspace{-0.7cm} p({x}^{k+1}) -p(x) + \langle {x}^{k+1}-x,
\nabla f(x) + A z \rangle
   + \langle \ztilde^{k+1} - z, -(A^*x - c) \rangle  \nn \\
&  & + \frac{1}{2\tau\sigma}(\|z^{k+1} - z\|^2- \|z^k - z\|^2) + \frac{1}{2}\big(\|x^{k+1} - x\|_{\widehat{\Sigma}_f + S}^2 - \|x^k - x\|_{\widehat{\Sigma}_f + S}^2\big) \nn \\
&   \le &
      - \frac{1}{2}\Big((2 - \tau)\sigma\|r^{k+1}\|^2 + \|x^{k+1} - x^k \|_{\frac{1}{2}\Sigma_f + S}^2
     \Big).
\end{eqnarray}
Similarly as for \eqref{part1}, for any $\alpha \in (0, 1]$, we have
\begin{eqnarray*}
&  & \hspace{-0.7cm} (1-\alpha)\big[ (2 - \tau)\sigma \|r^{k+1}\|^2
+
 \|x^{k+1} - x^k \|_{\frac{1}{2}\Sigma_f + S}^2\big]  \nn \\[5pt]
& \ge & \hspace{-0.1cm}  - (1-\alpha)\tau \sigma \|r^{k+1}\|^2 +
 (1-\alpha) \|x^{k+1} - x^k\|_{\frac{1}{2}\Sigma_f + S + \frac{1}{2}\sigma
 AA^*}^2
  +   (1-\alpha)\sigma\big(\|r^{k+1}\|^2 - \|r^k\|^2 \big).
\end{eqnarray*}
Adding $\alpha \big[ (2 - \tau)\sigma \|r^{k+1}\|^2 +
 \|x^{k+1} - x^k \|_{\frac{1}{2}\Sigma_f + S}^2\big]$ to both sides
 of the above inequality, we get
\begin{eqnarray*}
&  & \hspace{-0.7cm}   (2 - \tau)\sigma \|r^{k+1}\|^2 +
 \|x^{k+1} - x^k \|_{\frac{1}{2}\Sigma_f + S}^2 \nn \\[5pt]
& \ge & \hspace{-0.1cm}
  \|x^{k+1} - x^k\|_{H_f}^2
  +   (1-\alpha)\sigma\big(\|r^{k+1}\|^2 - \|r^k\|^2 \big) + (2\alpha - \tau) \sigma \|r^{k+1}\|^2.
\end{eqnarray*}
Substituting this into \eqref{case-III0}, we obtain
\begin{eqnarray}\label{case-III0-ALM}
&  & \hspace{-0.7cm} p({x}^{k+1}) -p(x) + \langle {x}^{k+1}-x,
\nabla f(x) + A z \rangle
   + \langle \ztilde^{k+1} - z, -(A^*x - c) \rangle  + \frac{1}{2}\Big\{\big[(\tau\sigma)^{-1}\|z^{k+1} - z\|^2 \nn \\
&  &\hspace{-0.5cm} + \|x^{k+1} - x\|_{\widehat{\Sigma}_f + S}^2   +
(1-\alpha)\sigma \|r^{k+1}\|^2\big]  - \big[(\tau\sigma)^{-1}\|z^k -
z\|^2+ \|x^k - x\|_{\widehat{\Sigma}_f + S}^2   + (1-\alpha)\sigma
\|r^k\|^2\big]\Big\} \nn \\
&   \le &
      - \frac{1}{2}\Big[ \|x^{k+1} - x^k\|_{H_f}^2
  + (2\alpha - \tau) \sigma \|r^{k+1}\|^2
     \Big].
\end{eqnarray}
\end{remark}

%%%%%%%%%%%%%%%%%%%%%%%%%%%%%%%%%%
\section{Convergence analysis}\label{Convergence}

In this section, we analyze the convergence for the Majorized iPADMM for solving problem
\eqref{ConvexP-G}. We first prove its global convergence  and then give some choices of proximal terms.

\subsection{The global convergence}\label{sub:globalconvergence}

Now we are ready to establish the convergence results for the
Majorized iPADMM for solving \eqref{ConvexP-G}.

%%%%%%%%%% Theorem 4.1
\begin{theorem}\label{Conver-Alg}
Suppose that  Assumptions {\em\ref{assump-fg-smooth}}   and
{\em\ref{assump-CQ}} hold.  Let $H_f$ and $M_g$ be defined by
\eqref{H-M}. Let $\{(x^k, y^k, z^k)\}$ be generated by the Majorized
iPADMM. For each $k$ and $(x, y, z) \in {\cal X} \times {\cal Y}
\times {\cal Z}$, let  $\phi_k(x, y, z)$, $\xi_{k+1}$, $s_{k+1}$,
$t_{k+1}$, $r^{k}$ and  $ \ztilde^{k+1}$ be defined as in
\eqref{Notation_dtp0}, \eqref{Notation_dtp1} and
\eqref{ADMM-notation}.  For $k = 0, 1, \ldots$, denote
\begin{equation}\label{Notation_dtp2}
\overline{\phi}_k: = \phi_k(\bar{x}, \bar{y}, \bar{z}) =
(\tau\sigma)^{-1}\|z^k - \bar{z}\|^2 + \|x^k -
\bar{x}\|^2_{\widehat{\Sigma}_f+S} +
       \|y^k - \bar{y}\|^2_{\widehat{\Sigma}_g + T + \sigma B
       B^*},
\end{equation}
%and for $k = 0, 1,\ldots$, denote
%\begin{equation}\label{Notation_dtp2-psi}
%\begin{array}{l}
% \bar{\psi}_{k+1}: =  \psi_{k+1}(\bar{x}, \bar{y}, \bar{z}) = \frac{1}{\tau\sigma}\|z^{k+1} - \bar{z}\|^2 + \|x^{k+1} - \bar{x}\|^2_{\widehat{\Sigma}_f+S} +
%       \|y^{k+1} - \bar{y}\|^2_{\widehat{\Sigma}_g + T + \sigma B B^*} \\
%       \qquad \qquad \qquad \qquad \qquad + \|y^{k+1} - y^k\|^2_{2\widehat{\Sigma}_g + T - 2\alpha\Sigma_g},
%\end{array}
%\end{equation}
where $(\bar{x}, \bar{y}, \bar{z}) \in {\cal W}^*$.
Then the following results hold:
\begin{itemize}
\item[$(a)$]
For any $\eta \in (0,  1/2)$ and  $k \ge 0$, we have
\begin{eqnarray}\label{case-I-a}
   & & \hspace{-0.7cm}
     \Big(\overline{\phi}_k +  \beta \sigma \|r^k\|^2\Big) - \Big(\overline{\phi}_{k+1} +  \beta \sigma \|r^{k+1}\|^2\Big)
\nn \\
   &   \ge &  \Big(\frac{1}{\tau^2\sigma} \|z^{k+1} - z^k\|^2
 + \|x^{k+1} - x^k\|_{\widehat{\Sigma}_f + S + \eta\sigma AA^*}^2  + \|y^{k+1} - y^k\|^2_{\widehat{\Sigma}_g + T
  + \eta\sigma B B^* }  \Big) \nn \\[5pt]
 &  &   -  \Big(\frac{\tau+\beta}{\tau^2\sigma} \|z^{k+1} - z^k\|^2
 + \|x^{k+1} - x^k\|^2_{\widehat{\Sigma}_f - \frac{1}{2}\Sigma_f}
  +  \| y^{k+1} - y^k \|^2_{\widehat{\Sigma}_g - \frac{1}{2}\Sigma_g +  \eta\sigma  BB^*} \Big),
\end{eqnarray}
where
\[ \label{def-beta-gamma}\beta:=\frac{\eta(1-\eta)}{1 - 2\eta}. \]
In addition,  assume that for some $\eta \in (0,  1/2)$,
\[ \label{condition-1}
\widehat{\Sigma}_f + S + \eta\sigma A A^* \succ 0 \qquad
\hbox{and} \qquad   \widehat{\Sigma}_g + T + \eta\sigma B B^* \succ
0
\]
and the following condition holds:
\[\label{sum-bound} \sum_{k=0}^{\infty}\big(\|x^{k+1} -
x^k \|^2_{\widehat{\Sigma}_f} +  \|y^{k+1} -
y^k\|^2_{\widehat{\Sigma}_g  + \sigma BB^*} + \|r^{k+1}\|^2\big) < + \infty. \]
Then the sequence $\{(x^k,
y^k)\}$ converges to an optimal solution of problem \eqref{ConvexP-G} and
$\{z^k\}$ converges to an optimal solution of the dual  of problem
\eqref{ConvexP-G}.

%\item[$(b)$] Suppose that $\tau \in (0, 2)$.
%Then for any integer $k \ge 0$ and any $\varsigma \in (1, +\infty)$,
%we have
%\begin{eqnarray}\label{case-I2}
%  &  \overline{\phi}_k -\overline{\phi}_{k+1}   \; \ge \; \frac{(2 - \tau)(1- \varsigma^{-1})}{\tau^2\sigma} \|z^{k+1} - z^k\|^2
% + \|x^{k+1} - x^k\|_{\frac{1}{2}\Sigma_f + S}^2
%+ \|y^{k+1} - y^k\|^2_{H_f},&
%\end{eqnarray}
%\blue{where
%\[
%H_f \;:=\; \frac{1}{2}\Sigma_g + T
%  + \sigma B B^* - \frac{\sigma\varsigma}{2-\tau}BB^*. \nn
%\]}
%In addition, assume that
%\[\label{condition-b-2} \frac{1}{2}\Sigma_f + S \succeq 0, \qquad  \frac{1}{2}\Sigma_f + S + \sigma A A^* \succ 0
% \qquad \hbox{and} \qquad \blue{H_f \succ 0 }\]
%for some constant $\varsigma \in (1, +\infty)$. Then the sequence
%$\{(x^k, y^k)\}$ converges to an optimal solution of
%\eqref{ConvexP-G} and $\{z^k\}$ converges to an optimal solution of
%the dual problem of \eqref{ConvexP-G}.

\item[$(b)$] Assume it holds that
\[ \label{condition-c-2-1}\frac{1}{2}\widehat{\Sigma}_g + T  \succeq 0.   \]
Then, for any $\alpha \in (0, 1]$ and $k \ge 1$, we have
\begin{eqnarray}\label{case-II2}
&  & \hspace{-0.9cm}
  \Big[\overline{\phi}_k + \big(1 - \alpha \min(\tau, \tau^{-1})\big)\sigma\|r^k\|^2 + \alpha\xi_k \Big]  -
\Big[\overline{\phi}_{k+1} + \big(1-\alpha \min(\tau, \;
 \tau^{-1})\big)\sigma \|r^{k+1}\|^2 + \alpha\xi_{k+1} \Big]
\nn \\[5pt]
& \geq & t_{k+1} + \big(- \tau + \alpha\min(1+\tau, \;
1+\tau^{-1})\big)\sigma \|r^{k+1}\|^2.
\end{eqnarray}
%where $\xi_{k+1}$, $t_{k+1}$ and $r^k$ are defined as in
%\eqref{Notation_dtp1} and \eqref{ADMM-notation}.
 In addition,
assume that $\tau \in (0, (1 + \sqrt{5})/2)$ and for some $\alpha
\in ({\tau}/{\min(1+\tau, 1+\tau^{-1})}, \; 1]$,
\[\label{condition-c-1}   \widehat{\Sigma}_f + S \succeq 0, \qquad H_f \succeq 0, \qquad
\frac{1}{2}\Sigma_f + S + \sigma A A^* \succ 0 \qquad \hbox{and} \qquad
  M_g  \succ 0.
  \]
 Then,  the sequence $\{(x^k, y^k)\}$ converges to an optimal solution of
problem \eqref{ConvexP-G} and $\{z^k\}$ converges to an optimal solution of
the dual  of problem \eqref{ConvexP-G}.

\end{itemize}
\end{theorem}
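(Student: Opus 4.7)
The overall strategy is to derive the auxiliary inequalities \eqref{case-I-a} and \eqref{case-II2} from the proposition's inequalities \eqref{case-I0} and \eqref{case-II0} by specializing the test point to a KKT point, and then use them as the basis of a telescoping and Fej\'{e}r-monotonicity argument. Throughout, fix $(\bar x,\bar y,\bar z)\in{\cal W}^*$ and set $(x,y,z)=(\bar x,\bar y,\bar z)$ in both inequalities. By the variational characterization \eqref{optimalcon1}, the non-quadratic part of the left-hand side, namely $(p(x^{k+1})+q(y^{k+1}))-(p(\bar x)+q(\bar y))+\langle x^{k+1}-\bar x,\nabla f(\bar x)+A\bar z\rangle+\langle y^{k+1}-\bar y,\nabla g(\bar y)+B\bar z\rangle$, is non-negative, while primal feasibility $A^*\bar x+B^*\bar y-c=0$ annihilates the $\langle\ztilde^{k+1}-\bar z,-(A^*\bar x+B^*\bar y-c)\rangle$ term. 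Both inequalities thus collapse to purely quadratic statements on $\{\overline\phi_k\}$, $\{r^k\}$, $\{s_{k+1}\}$, $\{t_{k+1}\}$ and $\{\xi_k\}$.

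For \eqref{case-II2} the reduction is then immediate after regrouping; the constraint $\alpha>\tau/\min(1+\tau,1+\tau^{-1})$ is imposed precisely so that $-\tau+\alpha\min(1+\tau,1+\tau^{-1})$ is strictly positive on the right. For \eqref{case-I-a} a further manipulation is needed: I would expand $\sigma\|A^*x^{k+1}+B^*y^k-c\|^2$ using the identity $A^*x^{k+1}+B^*y^k-c=r^k+A^*(x^{k+1}-x^k)$ and complete the square so as to split its contribution into a piece proportional to $\sigma(\|r^{k+1}\|^2-\|r^k\|^2)$ (coupling $r^{k+1}-r^k=A^*(x^{k+1}-x^k)+B^*(y^{k+1}-y^k)$ with the $y$-step difference) and a piece proportional to $\eta\sigma\|A^*(x^{k+1}-x^k)\|^2$; the extremal choice $\beta=\eta(1-\eta)/(1-2\eta)$ in \eqref{def-beta-gamma} is exactly what makes the resulting $2\times 2$ quadratic form in $(r^k,A^*(x^{k+1}-x^k))$ non-negative definite. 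The asserted bound then follows after substituting $\|z^{k+1}-z^k\|^2=\tau^2\sigma^2\|r^{k+1}\|^2$ and unpacking the definitions in \eqref{Notation_dtp0}--\eqref{ADMM-notation}.

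Convergence then proceeds along well-trodden lines. For Part~(a), the summability hypothesis \eqref{sum-bound} directly controls the ``bad'' second group on the right of \eqref{case-I-a}, so telescoping bounds $\{\overline\phi_k+\beta\sigma\|r^k\|^2\}$; combined with \eqref{condition-1} this forces $\|x^{k+1}-x^k\|,\|y^{k+1}-y^k\|,\|r^{k+1}\|\to 0$ and keeps $\{(x^k,y^k,z^k)\}$ bounded. For Part~(b), I would introduce $\Phi_k:=\overline\phi_k+(1-\alpha\min(\tau,\tau^{-1}))\sigma\|r^k\|^2+\alpha\xi_k$; under \eqref{condition-c-1} together with $\tau\in(0,(1+\sqrt{5})/2)$ each summand of $\Phi_k$ is non-negative (using $\widehat\Sigma_f+S\succeq 0$, $\tfrac12\widehat\Sigma_g+T\succeq 0$ for $\xi_k$, and $1-\alpha\min(\tau,\tau^{-1})>0$), so \eqref{case-II2} shows $\Phi_k$ is non-increasing and telescoping yields $\sum_k(t_{k+1}+\sigma\|r^{k+1}\|^2)<\infty$. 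Since $M_g\succ 0$, $y^{k+1}-y^k\to 0$; then $A^*(x^{k+1}-x^k)=(r^{k+1}-r^k)-B^*(y^{k+1}-y^k)\to 0$, and combining $H_f\succeq 0$ with $\tfrac12\Sigma_f+S+\sigma AA^*\succ 0$ yields $x^{k+1}-x^k\to 0$. Any cluster point of $\{(x^k,y^k,z^k)\}$ then satisfies the KKT system by passing to the limit in the first-order optimality conditions of \eqref{Iter-New-2}, and a standard Fej\'{e}r argument applied to $\overline\phi_k$ upgrades subsequential convergence to full convergence. The main obstacle will be Part~(b): verifying that all the quadratic pieces of $\Phi_k$ remain simultaneously non-negative under the interlocked conditions \eqref{condition-c-1} and $\tau<(1+\sqrt{5})/2$, and ruling out a degenerate direction for $\{x^{k+1}-x^k\}$ through the precise interaction between $H_f$, $M_g$ and $\tfrac12\Sigma_f+S+\sigma AA^*$.
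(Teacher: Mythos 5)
Your proposal is correct and follows essentially the same route as the paper: specialize \eqref{case-I0} and \eqref{case-II0} at a KKT point to obtain \eqref{case-I-a} and \eqref{case-II2}, telescope, deduce boundedness and vanishing of successive differences, pass to the limit in the optimality conditions at a cluster point, and upgrade to full convergence by a Fej\'{e}r-type argument. The only slight imprecision is the role you assign to $\beta=\eta(1-\eta)/(1-2\eta)$: any $\beta>0$ makes the Cauchy--Schwarz split of $\sigma\|r^k+A^*(x^{k+1}-x^k)\|^2$ valid, and the specific value is chosen so that $\beta/(1+\beta)>\eta$ and, more importantly, so that $\overline\phi_{k+1}+\beta\sigma\|r^{k+1}\|^2$ dominates $\|x^{k+1}-\bar x\|^2_{\widehat\Sigma_f+S+\eta\sigma AA^*}+\|y^{k+1}-\bar y\|^2_{\widehat\Sigma_g+T+\eta\sigma BB^*}$, which is what makes the telescoped quantity coercive under \eqref{condition-1} even though $\widehat\Sigma_f+S$ alone may be indefinite.
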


\begin{proof}
Note that $A^*\bar{x} + B^*\bar{y} - c=0$. Then we have
\begin{eqnarray*}
\overline{\phi}_k: = \phi_k(\bar{x}, \bar{y}, \bar{z}) & = &
(\tau\sigma)^{-1}\|z^k - \bar{z}\|^2 + \|x^k -
\bar{x}\|^2_{\widehat{\Sigma}_f+S} +
       \|y^k - \bar{y}\|^2_{\widehat{\Sigma}_g + T} + \sigma \|A^*\bar{x}+B^*y^k - c\|^2 \\
& = & (\tau\sigma)^{-1}\|z^k - \bar{z}\|^2 + \|x^k -
\bar{x}\|^2_{\widehat{\Sigma}_f+S} +
       \|y^k - \bar{y}\|^2_{\widehat{\Sigma}_g + T + \sigma B
       B^*}.
\end{eqnarray*}
%And thus \eqref{Notation_dtp2} is well defined.
Recall that for any
$(\bar{x}, \bar{y}, \bar{z}) \in {\cal W}^*$, we have
\begin{eqnarray}\label{sol-pro}
&  &\big(p({x}^{k+1})+q({y}^{k+1})\big) - \big(p(\bar{x}) +
q(\bar{y})\big) + \langle {x}^{k+1} - \bar{x}, \nabla f(\bar{x}) + A
\bar{z} \rangle
  + \langle {y}^{k+1} - \bar{y}, \nabla g(\bar{y}) + B \bar{z} \rangle \nn \\
&  & \quad \;  + \langle \ztilde^{k+1} - \bar{z}, -(A^* \bar{x} +
B^* \bar{y}- c) \rangle  \; \ge\; 0.
\end{eqnarray}
In the following, we will consider   Part (a) and Part
(b) separately.

\bigskip
\noindent {\bf Proof of Part (a).} Setting $(x, y, z) = (\bar{x}, \bar{y},
\bar{z}) \in {\cal W}^*$ in \eqref{case-I0} and using the relation
\eqref{sol-pro}, we get
\[\label{sum1}
 \overline{\phi}_k - \overline{\phi}_{k+1}  \; \geq \; s_{k+1}
+ (1 - \tau)\sigma \|r^{k+1}\|^2 +
  \sigma \|A^* x^{k+1} + B^* y^k - c\|^2.
  \]
Using the definition of $r^k$ and the Cauchy-Schwarz inequality, for
a given $\beta
> 0$ defined by \eqref{def-beta-gamma}, we get
\begin{eqnarray*}
&  &  \hspace{-0.7cm}
 \sigma \|A^* x^{k+1} + B^* y^k - c\|^2 \;=\;  \sigma \|r^{k} + A^* (x^{k+1} - x^k)\|^2  \nn \\[5pt]
& = &   \sigma \|r^k\|^2 +  \sigma \|A^* (x^{k+1} - x^k)\|^2
+ 2\sigma \langle A^* (x^{k+1} - x^k),  r^k \rangle \nn \\
& \ge &   \big[1 - (1+\beta)\big]\sigma \|r^k\|^2 + \Big(1 -
\frac{1}{1+\beta}\Big)\sigma \|A^* (x^{k+1} - x^k)\|^2 \nn \\
&=  &      - \beta \sigma \|r^k\|^2 + \frac{\beta}{1+\beta} \sigma
\|A^* (x^{k+1} - x^k)\|^2.
\end{eqnarray*}
By using the definition of $s_{k+1}$, $ r^{k+1} =
(\tau\sigma)^{-1}(z^{k+1} - z^k)$ and the above formula, we obtain
\begin{eqnarray}\label{sum2-new}
&  & \hspace{-0.7cm} s_{k+1} + (1 - \tau)\sigma \|r^{k+1}\|^2 +
  \sigma \|A^* x^{k+1} + B^* y^k - c\|^2  \nn \\[5pt]
& \ge &  \frac{1 - \tau - \beta}{\tau^2\sigma} \|z^{k+1} - z^k\|^2 +
  \|x^{k+1} - x^k\|_{\frac{1}{2}\Sigma_f + S + \frac{\beta}{1+\beta}\sigma AA^*}^2 + \|y^{k+1} - y^k\|^2_{\frac{1}{2}\Sigma_g +
  T} \nn \\
&  & +  \beta \sigma\big(\|r^{k+1}\|^2 - \|r^k\|^2 \big).
\end{eqnarray}
Recall that
\[ \frac{\beta}{1+\beta} = \frac{\frac{\eta(1-\eta)}{1-2\eta}}{1+\frac{\eta(1-\eta)}{1-2\eta}}
= \eta \frac{1 - \eta}{1 - \eta - \eta^2} >  \eta.  \nn\] By simple
manipulations, we get
\begin{eqnarray*}
&  & \hspace{-0.7cm} \frac{1 - \tau - \beta}{\tau^2\sigma} \|z^{k+1}
- z^k\|^2 +
  \|x^{k+1} - x^k\|_{\frac{1}{2}\Sigma_f + S + \frac{\beta}{1+\beta}\sigma AA^*}^2 + \|y^{k+1} - y^k\|^2_{\frac{1}{2}\Sigma_g + T} \\
 &   \ge &   \Big(\frac{1}{\tau^2\sigma} \|z^{k+1} - z^k\|^2
 + \|x^{k+1} - x^k\|_{\widehat{\Sigma}_f + S + \eta\sigma AA^* }^2  + \|y^{k+1} - y^k\|^2_{\widehat{\Sigma}_g + T
  + \eta\sigma B B^* }  \Big)\nn \\
 &  &  - \Big(\frac{\tau+\beta}{\tau^2\sigma} \|z^{k+1} - z^k\|^2
 + \|x^{k+1} - x^k\|^2_{\widehat{\Sigma}_f -  \frac{1}{2}\Sigma_f}
  +  \| y^{k+1} - y^k \|^2_{\widehat{\Sigma}_g - \frac{1}{2}\Sigma_g  +  \eta\sigma BB^*} \Big).
\end{eqnarray*}
Substituting this and \eqref{sum2-new} into \eqref{sum1}, we get
\eqref{case-I-a}.

Now assume that \eqref{condition-1} and \eqref{sum-bound} hold.
For any given $\eta \in (0, 1/2)$, using the definitions of
$\overline{\phi}_{k+1}$, $r^{k+1}$ and $\beta$, $A^*\bar{x} +
B^*\bar{y} = c$ and the Cauchy-Schwarz inequality, we get
\begin{eqnarray}\label{non-negative}
& & \hspace{-0.7cm} \overline{\phi}_{k+1} +  \beta
\sigma\|r^{k+1}\|^2 \; = \; (\tau\sigma)^{-1}\|z^{k+1} - \bar{z}\|^2
+ \|x^{k+1} - \bar{x}\|^2_{\widehat{\Sigma}_f + S}
    + \|y^{k+1} - \bar{y}\|^2_{\widehat{\Sigma}_g + T + \sigma BB^*}
\nn \\
&  & \hspace{3.2cm}+  \beta \sigma\|A^*(x^{k+1} -\bar{x})
    + B^*(y^{k+1} - \bar{y})\|^2\qquad \nn \\
& \ge &  (\tau\sigma)^{-1}\|z^{k+1} - \bar{z}\|^2 + \|x^{k+1} - \bar{x}\|^2_{\widehat{\Sigma}_f + S}
    + \|y^{k+1} - \bar{y}\|^2_{\widehat{\Sigma}_g + T + \sigma BB^*}  \nn \\
&  &  +  \beta \Big(1 - \frac{\eta}{1-\eta}\Big)\|x^{k+1} -
\bar{x}\|^2_{\sigma AA^*}
     +  \beta \Big(1 - \frac{1-\eta}{\eta}\Big)\|y^{k+1} - \bar{y}\|^2_{\sigma BB^*} \nn \\
& = &  (\tau\sigma)^{-1}\|z^{k+1} - \bar{z}\|^2 + \|x^{k+1} -
\bar{x}\|^2_{\widehat{\Sigma}_f + S + \eta\sigma AA^*}
    + \|y^{k+1} - \bar{y}\|^2_{\widehat{\Sigma}_g + T + \eta\sigma
    BB^*}.
\end{eqnarray}
Set
$$ \zeta_k := \frac{\tau+\beta}{\tau^2\sigma} \|z^{k+1} - z^k\|^2
 +  \|x^{k+1} - x^k\|^2_{\widehat{\Sigma}_f - \frac{1}{2}\Sigma_f}
 +  \|y^{k+1} - y^k\|^2_{\widehat{\Sigma}_g - \frac{1}{2}\Sigma_g +  \eta\sigma  BB^*}  $$
and
$$ \upsilon_k := \frac{1}{\tau^2\sigma} \|z^{k+1} - z^k\|^2
 + \|x^{k+1} - x^k\|_{\widehat{\Sigma}_f + S + \eta\sigma AA^*}^2  + \|y^{k+1} - y^k\|^2_{\widehat{\Sigma}_g + T
  + \eta\sigma B B^* }.  $$
From \eqref{sum-bound} and $z^{k+1} = z^k + \tau\sigma r^{k+1}$, we
get $\sum_{k=0}^{\infty}\zeta_k < + \infty$. Since for some $\eta \in (0,  1/2)$
\[ \widehat{\Sigma}_f + S + \eta \sigma AA^*\succ  0 \qquad
    \hbox{and} \qquad \widehat{\Sigma}_g + T + \eta\sigma BB^*  \succ 0, \nn\]
it follows from \eqref{non-negative} and \eqref{case-I-a} that
\begin{eqnarray}\label{theta-bound}
  0   \le   \overline{\phi}_{k+1} +  \beta \sigma\|r^{k+1}\|^2
&   \le & \overline{\phi}_k +  \beta \sigma\|r^k\|^2 + \zeta_k \;
\le\; \overline{\phi}_0 +  \beta \sigma\|r^0\|^2 + \sum_{j=0}^k
\zeta_j.
\end{eqnarray}
Thus the sequence $\{\overline{\phi}_{k+1}  +  \beta
\sigma\|r^{k+1}\|^2\}$ is bounded. From \eqref{non-negative}, we see
that the three sequences $\{\|z^{k+1} - \bar{z}\|\}$,  $\{\|x^{k+1}
- \bar{x}\|_{\widehat{\Sigma}_f + S + \eta \sigma AA^*}\}$  and
$\{\|y^{k+1} - \bar{y}\|_{\widehat{\Sigma}_g + T + \eta \sigma B
B^*}\}$ are all bounded. Since $\widehat{\Sigma}_f + S + \eta \sigma
AA^*\succ  0$ and $\widehat{\Sigma}_g + T + \eta\sigma B B^*   \succ
0$, the sequence $\{(x^k, y^k, z^k)\}$ is also bounded. Using
\eqref{case-I-a}, we get for any  $k \ge 0$,
\[ \upsilon_k \le (\overline{\phi}_k  +  \beta  \sigma\|r^k\|^2) - (\overline{\phi}_{k+1}+
   \beta \sigma\|r^{k+1}\|^2) + \zeta_k, \nn\]
and hence
\[  \sum_{j=0}^k \upsilon_j \le (\overline{\phi}_0 +  \beta  \sigma\|r^0\|^2) -
(\overline{\phi}_{k+1} +  \beta  \sigma\|r^{k+1}\|^2) + \sum_{j=0}^k
\zeta_j < + \infty. \nn\] Again, since $\widehat{\Sigma}_f + S +
\eta \sigma AA^*\succ  0$ and $\widehat{\Sigma}_g + T + \eta\sigma B
B^*   \succ 0$, from the definition of $\upsilon_k$, we get
\begin{eqnarray}
  & \displaystyle \lim_{k \rightarrow \infty}\|r^{k+1}\|
    \;=\; \lim_{k \rightarrow \infty} (\tau\sigma)^{-1}\|z^{k+1} - z^k\| = 0, &
\label{xyz-bound}
\\[5pt]
&\displaystyle \lim_{k \rightarrow \infty}\|x^{k+1} - x^k\|  = 0
  \quad \hbox{and} \quad \lim_{k \rightarrow \infty}\|y^{k+1} - y^k\| =
  0. &
\label{bound-xy}
\end{eqnarray}
Recall that the sequence $\{(x^k, y^k, z^k)\}$ is bounded. There is a
subsequence $\{(x^{k_i}, y^{k_i}, z^{k_i})\}$ which converges to a
cluster point, say $(x^{\infty}, y^{\infty}, z^{\infty})$. We next
show that $(x^{\infty}, y^{\infty})$ is an optimal solution to
problem \eqref{ConvexP-G} and $z^{\infty}$ is a corresponding
Lagrange multiplier.

Taking limits on both sides of \eqref{fxpx} and \eqref{gyqy} along
the subsequence $\{(x^{k_i}, y^{k_i}, z^{k_i})\}$, using
\eqref{xyz-bound} and \eqref{bound-xy},  we obtain that
\begin{equation*}
\left\{
\begin{array}{l}
 \big(p(x) + f(x)\big) - \big(p(x^{\infty}) + f(x^{\infty})\big)
     + \langle x - x^{\infty},  A z^{\infty} \rangle   \ge 0, \\[0.2cm]
 \big(q(y)+ g(y)\big)   - \big(q(y^{\infty}) + g(y^{\infty})\big)
     + \langle y - y^{\infty},  B z^{\infty} \rangle \ge 0, \\[0.2cm]
c-A^* x^{\infty} - B^* y^{\infty}  = 0,
\end{array}
\right.
\end{equation*}
i.e., $(x^{\infty}, y^{\infty}, z^{\infty})$ satisfies
\eqref{optimalcon1}.
%%From the discussion prior to the theorem, we consider that
Hence $(x^{\infty}, y^{\infty})$ is an optimal solution to
problem \eqref{ConvexP-G} and $z^{\infty}$ is a corresponding
Lagrange multiplier.

To complete the proof of Part (a), we show that $(x^{\infty},
y^{\infty}, z^{\infty})$ is actually the unique limit of $\{(x^k,
y^k, z^k)\}$. Replacing  $(\bar x, \bar y, \bar z)$ by $(x^{\infty}, y^{\infty},  z^{\infty})$ in \eqref{theta-bound},
 for any  $k \ge
k_i$, we have
\begin{eqnarray}
&  &  \phi_{k+1}(x^{\infty}, y^{\infty}, z^{\infty}) +  \beta
\sigma\|r^{k+1}\|^2 \; \le\; \phi_{k_i}(x^{\infty}, y^{\infty},
z^{\infty}) + \beta  \sigma\|r^{k_i}\|^2
   + \sum_{j = k_i}^k \zeta_j.
\label{theta-bound2}
\end{eqnarray}
Note that
$$ \lim_{i \rightarrow \infty} \phi_{k_i}(x^{\infty}, y^{\infty}, z^{\infty})  +  \beta \sigma\|r^{k_i}\|^2 = 0  \qquad
   \hbox{and} \qquad    \sum_{j = 0}^\infty \zeta_j < + \infty. $$
 Therefore, we get
\[ \lim_{k \rightarrow \infty} \phi_{k+1} (x^{\infty}, y^{\infty}, z^{\infty})
+  \beta  \sigma\|r^{k+1}\|^2 = 0. \nn \] Then from
\eqref{non-negative}, we obtain
\[ \lim_{k \rightarrow \infty} \big( (\tau\sigma)^{-1}\|z^{k+1} - z^{\infty}\|^2 + \|x^{k+1} - x^{\infty}\|^2_{\widehat{\Sigma}_f+S+\eta\sigma AA^*} +
         \|y^{k+1} - y^{\infty}\|^2_{\widehat{\Sigma}_g+ T + \eta\sigma BB^*}\big)= 0.  \nn \]
Since $\widehat{\Sigma}_f   + S +
\eta\sigma A A^* \succ 0 $ and $\widehat{\Sigma}_g   + T + \eta\sigma B B^* \succ 0$,
we also have that $\lim_{k \rightarrow \infty}
x^k = x^{\infty}$ and $\lim_{k \rightarrow \infty} y^k =
y^{\infty}$. Therefore, we have shown that the whole sequence
$\{(x^k, y^k, z^k)\}$ converges to $(x^{\infty}, y^{\infty},
z^{\infty})$ for any $\tau \in (0, +\infty)$.

\bigskip
\noindent {\bf Proof of  Part (b).} By using \eqref{sol-pro}, we can get
\eqref{case-II2} from \eqref{case-II0} immediately. Assume that
$\tau \in (0, (1+\sqrt{5})/2)$ and $\alpha \in
({\tau}/{\min(1+\tau, 1+\tau^{-1})}, \; 1]$. Then, we  have
$1 - \alpha\min(\tau, \; \tau^{-1}) > 0$ and $-\tau + \alpha\min(1+\tau, \; 1+\tau^{-1})  > 0$.
Since
\[ \widehat{\Sigma}_g  \succeq \Sigma_g, \quad M_g = \frac{1}{2} \Sigma_g + T + \min(\tau, \;
1+\tau-\tau^2)\sigma\alpha BB^*  \succ 0  \quad \hbox{and} \quad
\min(\tau, 1+\tau-\tau^2) \le 1,
  \nn\]
we have
\[ \widehat{\Sigma}_g + T \succeq \frac{1}{2}\widehat{\Sigma}_g + T \succeq 0 \quad \hbox{and} \quad
\widehat{\Sigma}_g + T + \sigma B B^* \succeq \frac{1}{2} \Sigma_g +
T + \min(\tau, \; 1+\tau-\tau^2)\sigma\alpha B B^* \succ 0.  \nn\]
Note that $H_f \succeq 0$ and $ M_g  \succ 0$. Then we obtain that
$\overline{\phi}_{k+1}\geq 0, t_{k+1}\geq 0, \xi_{k+1} \geq 0$.
 From \eqref{relation-z-r} and \eqref{case-II2}, we see immediately
that the sequence $\{\overline{\phi}_{k+1} + \xi_{k+1}\}$ is bounded,
\[ \label{limt2} \lim_{k \rightarrow \infty} t_{k+1} = 0  \;\; \hbox{and} \;\; \lim_{k \rightarrow \infty}\|z^{k+1} - z^k\|
    = \lim_{k \rightarrow \infty} \tau\sigma \|r^{k+1}\|=0, \]
which, together with \eqref{Notation_dtp1} and \eqref{condition-c-1},
imply that
\[ \label{byy2}  \lim_{k \rightarrow \infty}\|x^{k+1} -x^k\|_{H_f} = 0,  \qquad
   \lim_{k \rightarrow \infty} \|y^{k+1} - y^k \| = 0    \]
and
\begin{eqnarray} \label{axx2}
& & \|A^*(x^{k+1} - x^k)\|\; \le\; \|r^{k+1}\| + \|r^k\| +
   \|B^*(y^{k+1} - y^k)\| \rightarrow 0, \quad k \rightarrow \infty.
\end{eqnarray}
Thus, from \eqref{byy2} and \eqref{axx2} we obtain that
\begin{eqnarray*}
 &  &\hspace{-0.7cm}
   \lim_{k \rightarrow \infty} \|x^{k+1} - x^k\|_{\frac{1}{2}\Sigma_f  + S + \sigma A A^*}^2
\;=\;  \lim_{k \rightarrow \infty} \big( \|x^{k+1} - x^k\|_{H_f}^2 +
\frac{1}{2}(1+\alpha)\sigma \|A^*(x^{k+1} - x^k)\|^2\big) \; =\; 0.
\end{eqnarray*}
Since $\frac{1}{2}\Sigma_f  + S + \sigma A A^* \succ 0$, we also get
\[  \label{limxk2} \lim_{k \rightarrow \infty}\|x^{k+1} - x^k\| = 0.  \]
By the definition of $\overline{\phi}_{k+1}$, we see that the three
sequences $\{\|z^{k+1} - \bar{z}\|\}$,  $\{\|x^{k+1} - \bar{x}\|_{
\widehat{\Sigma}_f + S}\}$ and $\{\|y^{k+1} -
\bar{y}\|_{\widehat{\Sigma}_g + T + \sigma BB^*}\}$ are all bounded.
Since $\widehat{\Sigma}_g + T + \sigma B B^* \succ 0$, the sequence
$\{\|y^{k+1}\|\}$ is bounded. Note that $A^*\bar{x} + B^*\bar{y} =
c$. Furthermore, by using
\begin{eqnarray*}
\|A^*(x^{k+1} -  \bar{x})\| & \le & \|A^*x^{k+1} + B^*y^{k+1}-(A^*\bar{x} + B^*\bar{y})\| + \|B^*(y^{k+1} - \bar{y})\|
\\
&  = & \|r^{k+1}\| + \|B^*(y^{k+1} - \bar{y})\|,
\end{eqnarray*}
we also know that the sequence $\{\|A^*(x^{k+1} - \bar{x})\|\}$ is bounded, and so is the sequence
$\{\|x^{k+1} - \bar{x}\|_{\widehat{\Sigma}_f + S + \sigma A A^*}\}$.
This shows that the sequence $\{\|x^{k+1}\|\}$ is also bounded since
$\widehat{\Sigma}_f  + S + \sigma A A^* \succeq \frac{1}{2}\Sigma_f  + S + \sigma A A^* \succ 0$. Thus, the sequence $\{(x^k, y^k, z^k)\}$ is bounded.

Since the sequence $\{(x^k, y^k, z^k)\}$ is bounded, there is a subsequence $\{(x^{k_i}, y^{k_i}, z^{k_i})\}$
which converges to a cluster point, say $(x^{\infty}, y^{\infty}, z^{\infty})$. We next show
that $(x^{\infty}, y^{\infty})$ is an optimal solution to problem \eqref{ConvexP-G} and
$z^{\infty}$ is a corresponding Lagrange multiplier.

%To this end, we first from \eqref{limt} that
%\[ \label{limx2}  \lim_{k \rightarrow \infty}\|x^{k+1} -x^k\|_{S + \frac{1}{2}\Sigma_f} = 0,  \]
%and
%\[ \label{limBy2}    \lim_{k \rightarrow \infty}\|y^{k+1} - y^k\|_{(4\alpha + \frac{1}{2})\Sigma_g + T
%     + (1+\tau-\tau^2)\sigma B^*B - 2\widehat{\Sigma}_g} = 0, \qquad
%   \lim_{k \rightarrow \infty} \|z^{k+1} - z^k\|  = 0. \]
%By using the inequality
%\[  \|Ax^{k+1} + By^k - c\| \le \|Ax^{k+1} + By^{k+1} - c\| + \|B(y^{k+1} - y^k)\|, \nn \]
%we deduce from \eqref{limt} and \eqref{limBy} that
%\[ \label{limz} \lim_{k \rightarrow \infty}\|z^{k+1} - z^k\| = 0 \;\; \hbox{and} \;\;
%    \lim_{k \rightarrow \infty} \|Ax^{k+1} + By^k -c \| =0.   \]
%Using \eqref{limt}, \eqref{limz} and the inequality
%\[  \|A(x^{k+1} - x^k)\| \le \|Ax^{k+1} + By^k - c\| + \|Ax^k + By^k - c\|, \nn \]
%we have
%\[  \lim_{k \rightarrow \infty} \|A(x^{k+1} - x^k)\| = 0.  \nn \]

%Similarly from \eqref{byy2} and \eqref{limBy2}, we get
%\[\label{limyk2}  \lim_{k \rightarrow \infty} \|y^{k+1} - y^k\|_{(\Sigma_g  + T + \sigma B^*B)} = 0.  \]
Taking limits on both sides of \eqref{fxpx} and \eqref{gyqy} along
the subsequence $\{(x^{k_i}, y^{k_i}, z^{k_i})\}$, using
\eqref{limt2}, \eqref{byy2} and \eqref{limxk2},  we obtain that
\begin{equation*}
\left\{
\begin{array}{l}
 \big(p(x) + f(x)\big) - \big(p(x^{\infty}) + f(x^{\infty})\big)
     + \langle x - x^{\infty},  A z^{\infty} \rangle   \ge 0, \\[0.2cm]
 \big(q(y) + g(y)\big)  - \big(q(y^{\infty}) + g(y^{\infty})\big)
     + \langle y - y^{\infty},  B z^{\infty} \rangle \ge 0, \\[0.2cm]
c-A^* x^{\infty} - B^* y^{\infty}  = 0,
\end{array}
\right.
\end{equation*}
i.e., $(x^{\infty}, y^{\infty}, z^{\infty})$ satisfies \eqref{optimalcon1}.
%From the discussion prior to the theorem, we consider that
Thus $(x^{\infty}, y^{\infty})$
is an optimal solution to problem \eqref{ConvexP-G} and $z^{\infty}$ is a corresponding
Lagrange multiplier.

To complete the proof of Part (b), we show that $(x^{\infty},
y^{\infty}, z^{\infty})$ is actually the unique limit of $\{(x^k,
y^k, z^k)\}$. As in the proof of \eqref{theta-bound2} in Part (a),
we can apply the inequality \eqref{case-II2} with
$(\bar{x},\bar{y},\bar{z})= (x^{\infty}, y^{\infty}, z^{\infty})$ to
show that
\[ \lim_{k \rightarrow \infty} \phi_{k+1} (x^{\infty}, y^{\infty}, z^{\infty}) = 0 \quad {\rm and} \quad
\lim_{k\rightarrow\infty} \|\xi_{k+1}\| = 0. \nn \] Hence
\begin{eqnarray*}
&\lim_{k \rightarrow \infty} \big((\tau\sigma)^{-1}\|z^{k+1} - z^{\infty}\|^2 + \|x^{k+1} - x^{\infty}\|^2_{\widehat{\Sigma}_f+S} +
       \|y^{k+1} - y^{\infty}\|^2_{\widehat{\Sigma}_g + T + \sigma B B^*}
      \big) = 0,  &
\\[5pt]
&  \begin{array}{rrl}
\|A^*(x^{k+1} -  x^{\infty})\| & \le & \|A^*x^{k+1} + B^*y^{k+1} - (A^*x^{\infty} + B^*y^{\infty})\| +
   \|B^*(y^{k+1} - y^{\infty})\|
\\[5pt]
  & = & \|r^{k+1}\| + \|B^*(y^{k+1} -  y^{\infty})\| \rightarrow 0, \quad k \rightarrow \infty
\end{array} &
\end{eqnarray*}
and
\[ \lim_{k \rightarrow \infty}  \|x^{k+1} - x^{\infty}\|^2_{\widehat{\Sigma}_f + S  +
     \sigma  A A^*} = 0.  \nn \]
Using that fact that $\widehat{\Sigma}_f   + S +
\sigma A A^*$ and $\widehat{\Sigma}_g   + T + \sigma B B^*$ are both
positive definite, we have $\lim_{k \rightarrow \infty} x^k =
x^{\infty}$ and $\lim_{k \rightarrow \infty} y^k = y^{\infty}$.
Therefore, we have shown that the whole sequence $\{(x^k, y^k,
z^k)\}$ converges to $(x^{\infty}, y^{\infty}, z^{\infty})$ if $\tau
\in  (0, (1+\sqrt{5})/2)$. The proof is completed.
\end{proof}

\begin{remark} \label{rm:conditionsonST} In practice,  Part (a) of Theorem \ref{Conver-Alg} can be applied in a more heuristic way by using any sufficient condition to guarantee \eqref{sum-bound} holds. If this sufficient condition does not hold, then one can just use the conditions in Part (b).  The conditions on $S$ and $T$ in Part (b) for the case that $\tau =1$ can be written as for some $\alpha \in (1/2, 1]$,
\[\nn   \widehat{\Sigma}_f + S \succeq 0, \quad  \frac{1}{2}\Sigma_f + S + \frac{1}{2}(1-\alpha)\sigma  A A^*\succeq 0, \quad
\frac{1}{2}\Sigma_f + S + \sigma A A^* \succ 0
\]
  and
  \[\nn  \frac{1}{2}\widehat{\Sigma}_g + T  \succeq 0, \quad
  \frac{1}{2}\Sigma_g + T + \alpha  \sigma B
 B^*\succ 0;
  \]
 and these conditions  for the case that $\tau =1.618$ can be replaced by, for some $\alpha \in [0.99998, 1]$,
  \[\nn   \widehat{\Sigma}_f + S \succeq 0, \quad  \frac{1}{2}\Sigma_f + S +  \frac{1}{2}(1-\alpha)\sigma A A^*\succeq 0, \quad
\frac{1}{2}\Sigma_f + S + \sigma A A^* \succ 0
\]
  and
  \[\nn  \frac{1}{2}\widehat{\Sigma}_g + T  \succeq 0, \quad
  \frac{1}{2}\Sigma_g + T +  0.000075 \alpha \sigma B
 B^*\succ 0.
  \]
\end{remark}

\begin{remark}\label{convergence-ALM} Suppose that  $B$ is vacuous, $q
\equiv 0$ and $g \equiv 0$. Then for any $\tau \in (0, +\infty)$
and   $k \ge 0$, we have $y^{k+1} = y^0 = \bar{y}$. Similarly as in
Part (b) of Theorem \ref{Conver-Alg}, using \eqref{case-III0-ALM} we
have for any $\alpha \in (0, 1]$ and   $k \ge 0$ that
\begin{eqnarray*}
&  & \hspace{-0.7cm}
  \Big\{(\tau\sigma)^{-1}\|z^k - \bar{z}\|^2 + \|x^k -
\bar{x}\|^2_{\widehat{\Sigma}_f+S} + (1-\alpha)\sigma \|r^k\|^2
\Big\} \nn
  \\[5pt]
  &  & -
\Big\{(\tau\sigma)^{-1}\|z^{k+1} - \bar{z}\|^2 + \|x^{k+1} -
\bar{x}\|^2_{\widehat{\Sigma}_f+S} + (1-\alpha)\sigma \|r^{k+1}\|^2
\Big\}
\nn \\[5pt]
& \geq &  \|x^{k+1} - x^k\|_{H_f}^2
   + (2\alpha - \tau) \sigma \|r^{k+1}\|^2.
\end{eqnarray*}
 In addition,
assume that $\tau \in (0, 2)$ and for some $\alpha \in (\tau/2, \;
1]$,
\[   \widehat{\Sigma}_f + S \succeq 0, \quad H_f =\frac{1}{2}\Sigma_f + S +  \frac{1}{2}(1-\alpha)\sigma A A^*\succeq 0, \quad
\frac{1}{2}\Sigma_f + S + \sigma A A^* \succ 0. \nn
\]
 Then,  the sequence $\{x^k\}$ converges to an optimal solution of
problem \eqref{ConvexP-G} and $\{z^k\}$ converges to an optimal
solution of the dual  of problem \eqref{ConvexP-G}.
\end{remark}

\subsection{Choices of proximal terms}

Let ${\cal G}:{\cal X} \to {\cal X}$ be any  given self-adjoint linear operator with ${\rm dim}({\cal X}) =n$, the dimension of ${\cal X}$. We shall first introduce a majorization technique to find a self-adjoint positive definite linear operator
${\cal M}$ such that ${\cal M} \succeq {\cal G}$ and ${\cal M}^{-1}$ is easy to calculate.
Suppose that ${\cal G}$ has the following spectral decomposition
\[ {\cal G} = \sum_{i=1}^n \lambda_i u_i u_i^*, \nn\]
where $\lambda_1 \ge \lambda_2 \ge \cdots \ge \lambda_n$, with
$\lambda_l > 0$ for some $1\le l\le n$,   are the eigenvalues of
${\cal G}$ and $u_i$, $i=1. \ldots, n$ are the corresponding
mutually orthogonal unit eigenvectors. Then, for a small
$l$, we can design a practically useful majorization for ${\cal G}$
as follows:
\[\label{defGM} {\cal G} \preceq {\cal M}: = \sum_{i=1}^l \lambda_i u_i u_i^* + \lambda_l \sum_{i=l+1}^n
u_iu_i^* = \lambda_l I + \sum_{i=1}^l (\lambda_i -
\lambda_l)u_iu_i^*.  \] Note that ${\cal M}^{-1}$ can be easily
obtained as follows:
\[\label{definvM} {\cal M}^{-1}  = \sum_{i=1}^l \lambda_i^{-1} u_i u_i^* + \lambda_l^{-1} \sum_{i=l+1}^n
u_iu_i^* = \lambda_l^{-1} I + \sum_{i=1}^l(\lambda_i^{-1} -
\lambda_l^{-1})u_iu_i^*. \]
Thus, we only need to compute the first  $l$
eigen-pairs $(\lambda_i, u_i)$, $i=1, \ldots, l$  of ${\cal G}$ for computing ${\cal M}$
and ${\cal M}^{-1}$.

In the following, we only discuss how to choose proximal terms for the
$x$-part. The discussions for the $y$-part are similar, and are thus
omitted here. It follows from \eqref{Iter-New-2} that

\[ \label{iter-x}x^{k+1} =
 \argmin_{x \in {\cal X}}\; p(x) + \frac{1}{2} \inprod{x}{ {\cal P}
x} + \inprod{\nabla f(x^k)
 + \sigma A \widehat{r}^k -
{\cal P} x^k }{x}. \]

 {\bf Example 4.1:} $p \not\equiv  0$ and $A  \neq 0$. Choose $\alpha \in (\tau/\min(1+\tau, 1+\tau^{-1}),
 1]$ such that
$ 2\widehat{\Sigma}_f - \Sigma_f \succeq (1-\alpha)\sigma AA^*$.
 Define
 \[ \label{def-rho0}
\rho_0: = \lambda_{\max}\Big(\widehat{\Sigma}_f - \frac{1}{2} \Sigma_f+ \frac{1}{2}(1+\alpha)\sigma AA^* \Big).
 \]
Note that $\rho_0 \geq \lambda_{\max}\Big(\widehat{\Sigma}_f - \frac{1}{2} \Sigma_f\Big)$.
Let $\rho$ be any positive number such that
\[\label{def-rho} \left\{
\begin{array}{ll}
  \rho \ge \rho_0 & \hbox{if} \;    AA^* \succ 0, \\[8pt]
  \rho  \ge \rho_0 \;\; \hbox{and} \;\; \rho > \lambda_{\max}(\widehat{\Sigma}_f  - \frac{1}{2} \Sigma_f) &
  \hbox{otherwise}.
\end{array} \right.  \]
A particular choice which we will consider later in the
numerical experiments is:
\begin{eqnarray}
\rho &=&1.01\rho_0.
\label{eq-rho1}
\end{eqnarray}
Choose
\begin{eqnarray}
S :=  - \frac{1}{2}\Big[ \Sigma_f +  (1-\alpha)\sigma AA^*\Big] + \Big[\rho I -
       \big(\widehat{\Sigma}_f  - \frac{1}{2} \Sigma_f + \frac{1}{2}(1+\alpha)\sigma AA^* \big)\Big]
=  \rho I - \widehat{\Sigma}_f - \sigma AA^*,
 \label{eq-SS}
\end{eqnarray}
where $\rho$ is defined in \eqref{def-rho}. Then, $S$, which
obviously may be indefinite, satisfies \eqref{condition-c-1}, and
\[ {\cal P} = \widehat{\Sigma}_f + S + \sigma AA^* = \rho  I \succ 0. \nn\]
One  interesting special  case is $\widehat{\Sigma}_f = \Sigma_f = Q
$ for some self-adjoint linear operator $Q \succeq 0$. By
taking $\alpha = 1$ and $\rho = \frac{1}{2}\lambda_{\max}(Q) +
\sigma \lambda_{\max}(AA^*)$, we have
\[ S  = \frac{1}{2}\lambda_{\max}(Q) I - Q + \sigma\big[\lambda_{\max}(AA^*)I - AA^*\big].\nn\]

 {\bf Example 4.2:} $p \equiv  0$. Let $\alpha \in (\tau/\min(1+\tau, 1+\tau^{-1}),
 1]$ such that
$2\widehat{\Sigma}_f - \Sigma_f \succeq (1-\alpha)\sigma AA^*$.
Choose ${\cal G}$ such that
\[  \left\{
 \begin{array}{ll}
    {\cal G} = \widehat{\Sigma}_f -  \frac{1}{2} \Sigma_f  + \frac{1}{2}(1+\alpha)\sigma AA^* & \hbox{if} \; AA^* \succ 0, \\[8pt]
    {\cal G} \succeq \widehat{\Sigma}_f -  \frac{1}{2} \Sigma_f + \frac{1}{2}(1+\alpha)\sigma AA^*  \;\; \hbox{and}
     \;\; {\cal G} \succ \widehat{\Sigma}_f - \frac{1}{2}\Sigma_f & \hbox{otherwise}.
\end{array} \right.
  \nn \]
Let ${\cal M} \succ 0$ be the majorization of ${\cal G}$ as in
\eqref{defGM}. Choose
\[ S: =   -\frac{1}{2} \Big[\Sigma_f +  (1-\alpha)\sigma AA^*\Big] +
  \Big[ {\cal M} - \big(\widehat{\Sigma}_f - \frac{1}{2} \Sigma_f +  \frac{1}{2}(1+\alpha)\sigma AA^*\big)\Big]. \nn\]
Certainly, $S$, which may be indefinite, satisfies
\eqref{condition-c-1}, and
\[{\cal P} =  S + \widehat{\Sigma}_f + \sigma AA^* = {\cal M} \succ 0.\nn\]
By using \eqref{defGM} and \eqref{definvM}, one can compute ${\cal P}$ and ${\cal P}^{-1}$
  at a low cost if $l$ is a small integer number, for
example $1\le l\le 6$.
One special   case is $AA^* \succ 0$ and
$\widehat{\Sigma}_f = \Sigma_f = Q  $ for some self-adjoint
linear operator $Q \succeq 0$. By taking $\alpha = 1$, ${\cal G} =
\frac{1}{2}Q + \sigma AA^*$, and ${\cal M}$ to be a majorization
of ${\cal G}$ as defined in \eqref{defGM}, we have
\[  S = {\cal M}  - (Q + \sigma AA^*). \nn\]

\medskip

 {\bf Example 4.3:}  $p$ can be decomposed into two separate parts $p(x) := p_1(x_1) + p_2(x_2)$ with $x := (x_1, x_2)$.
 For simplicity, we assume
 \[  \Sigma_f = Q := \left(
          \begin{array}{cc}
           Q_{11}            &   Q_{12}  \\
           Q_{12}^*          &   Q_{22}
            \end{array}
           \right)  \quad \hbox{and} \quad \widehat{\Sigma}_f = Q +
           \hbox{Diag}(D_1, D_2), \nn\]
where $D_1$ and $D_2$ are two self-adjoint and positive semidefinite
linear operators. Define
\[ {\cal M}: = \hbox{Diag}\big({\cal M}_1, {\cal M}_2 \big),\nn \]
where \[
 {\cal M}_1: = D_1 + \frac{1}{2}\big(Q_{11} +
 (Q_{12}Q_{12}^*)^{\frac{1}{2}}\big) +  \sigma
\big(A_1A_1^* + (A_1A_2^*A_2A_1^*)^{\frac{1}{2}}\big) \nn\] and \[
 {\cal M}_2: = D_2 + \frac{1}{2}\big(Q_{22} +
 (Q_{12}^*Q_{12})^{\frac{1}{2}}\big) +  \sigma
\big(A_2A_2^* + (A_2A_1^*A_1A_2^*)^{\frac{1}{2}}\big). \nn\]
If $A_1A_1^* + (A_1A_2^*A_2A_1^*)^{\frac{1}{2}} \succ 0$ and $A_2A_2^*
+ (A_2A_1^*A_1A_2^*)^{\frac{1}{2}} \succ 0$, we can choose
\[ S:  = {\cal M} - Q - \hbox{Diag}(D_1, D_2) - \sigma AA^*; \nn\]
otherwise we can add a block diagonal self-adjoint positive definite linear operator to $S$. Then one can see that  $S$, which again may be indefinite,
satisfies \eqref{condition-c-1} for $\alpha = 1$, by using the fact
that for any given linear operator $X$ from ${\cal X}$ to another finite dimensional real Euclidean space, it holds that
\[  \left(
          \begin{array}{cc}
                        &   X  \\
           X^*          &
            \end{array}
           \right) \preceq \left(
          \begin{array}{cc}
           (XX^*)^{\frac{1}{2}}    &                       \\
                                   & (X^*X)^{\frac{1}{2}}
            \end{array}
           \right). \nn\]
           Thus, from the block diagonal structure of ${\cal P} ={\cal M}$, we can see that
solving the subproblem for   $x$ can be split  into solving two separate subproblems for the $x_1$-part
and $x_2$-part, respectively.
If the subproblem for either the $x_1$- or the $x_2$- part is still difficult to solve, one
may add a self-adjoint positive semidefinite linear operator to ${\cal M}_1$ or ${\cal M}_2$, respectively,  to make
the subproblem easier to solve. We refer the readers to  Examples 4.1 and
4.2 for possible choices of such linear operators in different scenarios.

In Examples 4.1-4.3, we list various choices of proximal terms in different situations.
Nevertheless, they are far from   being exhaustive. For example, if $p \not\equiv 0$, $x: = (x_1, \ldots, x_m)$, $p(x)
= p_1(x_1)$ and $f(x)$ is a convex quadratic function, one may construct  Schur complement based or more general
 symmetric Gauss-Seidel based proximal  terms to derive convergent ADMMs for solving some interesting multi-block conic optimization problems
  \cite{LiSunToh, LiXD15}.

\section{The analysis of iteration-complexity}\label{iter-complexity}

In this section, we will present the iteration-complexity including
non-ergodic and ergodic senses of the Majorized iPADMM.

\subsection{The non-ergodic iteration-complexity}

In this subsection, we will present the non-ergodic
iteration-complexity of an $\varepsilon$-approximate KKT point for
the Majorized iPADMM. For related results, see the work of Davis and
Yin \cite{DavisYin15B} on the operator-splitting scheme with
separable objective functions and the work of Cui et al.
\cite{CuiLiSunToh15} on the majorized ADMM with coupled objective
functions.

\begin{theorem} Assume  that  Assumptions
{\em\ref{assump-fg-smooth}}
  and {\em\ref{assump-CQ}} hold. Let
$\{(x^i, y^i, z^i)\}$ be generated by the Majorized iPADMM. Assume
that $\tau \in (0, (1+\sqrt{5})/2)$ and for some $\alpha \in
({\tau}/{\min(1+\tau, 1+\tau^{-1})}, \; 1]$,
$$
\widehat{\Sigma}_f + S \succeq 0, \qquad H_f \succ 0,  \qquad
\frac{1}{2}\widehat{\Sigma}_g + T \succeq 0 \qquad \hbox{and} \qquad
  M_g \succ 0, $$
 where $H_f$ and $M_g$ are defined in \eqref{H-M}.
Then, we have
\begin{eqnarray}\label{KKT-distance}
&  & \min_{1 \le i \le k}
 \Big\{d^2\big(0, \; \partial p(x^{i+1}) + \nabla f(x^{i+1}) +  A z^{i+1}\big) + d^2\big(0, \; \partial q(y^{i+1}) + \nabla g(y^{i+1}) +
   B z^{i+1}\big) \nn \\
&  & \qquad \quad + \|A^*x^{i+1} + B^*y^{i+1} - c\|^2 \Big\}
   = o({1}/{k})
\end{eqnarray}
and
\[\label{rate-funval}
     \min_{1 \le i \le k}\Big| \big(p(x^i) + f(x^i) + q(y^i) + g(y^i)\big)
     - \big(p(\bar x) + f(\bar x) + q(\bar y) + g(\bar y)\big) \Big|  = o(1/\sqrt{k}),\]
where $(\bar{x}, \bar y, \bar z)   \in {\cal X} \times  {\cal Y}
\times {\cal Z}$ satisfies \eqref{gradient-pq}.
\end{theorem}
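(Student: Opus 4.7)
The plan is to deduce both estimates from the descent inequality \eqref{case-II2} of Theorem~\ref{Conver-Alg}(b) together with Lemma~\ref{prop-lem}, by routing through the KKT residual and then translating that to a functional residual. I would organize the argument in four stages.

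First, I would exploit summability. Under the stated hypotheses, the coefficients $-\tau+\alpha\min(1+\tau,1+\tau^{-1})$ and $1-\alpha\min(\tau,\tau^{-1})$ are both strictly positive, the proximal operators $H_f$ and $M_g$ are positive definite, and $\frac{1}{2}\widehat{\Sigma}_g+T\succeq 0$, so $\overline{\phi}_k+(1-\alpha\min(\tau,\tau^{-1}))\sigma\|r^k\|^2+\alpha\xi_k\ge 0$ for all $k$. Telescoping \eqref{case-II2} from $k=1$ onward therefore yields
\[
\sum_{k=1}^{\infty}\bigl(t_{k+1}+\sigma\|r^{k+1}\|^2\bigr)<+\infty,
\]
and, because $H_f\succ 0$ and $M_g\succ 0$, this implies $\sum_k(\|x^{k+1}-x^k\|^2+\|y^{k+1}-y^k\|^2+\|r^{k+1}\|^2)<+\infty$. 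Along the way I would invoke Theorem~\ref{Conver-Alg}(b) to record that $\{(x^k,y^k,z^k)\}$ is bounded, so $\{x^k-\bar x\}$, $\{y^k-\bar y\}$, $\{z^k\}$ all sit in a bounded set—this will be crucial in the last step.

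Second, I would build an explicit subgradient residual. Extracting the first-order optimality of the two subproblems in \eqref{Iter-New-2}, there exist $\xi^{k+1}\in\partial p(x^{k+1})$ and $\eta^{k+1}\in\partial q(y^{k+1})$ with
\[
\xi^{k+1}=-\nabla f(x^k)-A\tilde z^{k+1}-\sigma AB^*(y^k-y^{k+1})-(\widehat\Sigma_f+S)(x^{k+1}-x^k),
\]
\[
\eta^{k+1}=-\nabla g(y^k)-B\tilde z^{k+1}-(\widehat\Sigma_g+T)(y^{k+1}-y^k).
\]
Using $z^{k+1}-\tilde z^{k+1}=(\tau-1)\sigma r^{k+1}$ and Lipschitz continuity of $\nabla f,\nabla g$, I then get
\[
\xi^{k+1}+\nabla f(x^{k+1})+Az^{k+1}=\bigl[\nabla f(x^{k+1})-\nabla f(x^k)\bigr]+(\tau-1)\sigma Ar^{k+1}+\sigma AB^*(y^{k+1}-y^k)-(\widehat\Sigma_f+S)(x^{k+1}-x^k),
\]
\[
\eta^{k+1}+\nabla g(y^{k+1})+Bz^{k+1}=\bigl[\nabla g(y^{k+1})-\nabla g(y^k)\bigr]+(\tau-1)\sigma Br^{k+1}-(\widehat\Sigma_g+T)(y^{k+1}-y^k).
\]
Taking norms gives a constant $C>0$ (depending on $\sigma,\tau$, Lipschitz moduli, and operator norms of $A,B,\widehat\Sigma_f+S,\widehat\Sigma_g+T$) such that the left-hand side of \eqref{KKT-distance} is bounded by $C(\|x^{k+1}-x^k\|^2+\|y^{k+1}-y^k\|^2+\|r^{k+1}\|^2)$, which by the first step is summable. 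Applying Lemma~\ref{prop-lem} delivers the $o(1/k)$ rate in \eqref{KKT-distance}.

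Third, I would convert the KKT-residual rate into the function-value rate \eqref{rate-funval}. For the lower bound I would use that $(\bar x,\bar y,\bar z)$ is a saddle point of ${\cal L}$, hence
\[
\bigl(p(x^{k+1})+f(x^{k+1})+q(y^{k+1})+g(y^{k+1})\bigr)-\bigl(p(\bar x)+f(\bar x)+q(\bar y)+g(\bar y)\bigr)\ge-\langle\bar z,r^{k+1}\rangle\ge-\|\bar z\|\,\|r^{k+1}\|.
\]
For the upper bound, convexity of $p+f$ and $q+g$ combined with the representation of the subgradient residuals above and the identity $A^*\bar x+B^*\bar y=c$ gives
\[
\bigl(p(x^{k+1})+f(x^{k+1})+q(y^{k+1})+g(y^{k+1})\bigr)-\bigl(p(\bar x)+f(\bar x)+q(\bar y)+g(\bar y)\bigr)\le\|\varepsilon_x^{k+1}\|\,\|x^{k+1}-\bar x\|+\|\varepsilon_y^{k+1}\|\,\|y^{k+1}-\bar y\|+\|z^{k+1}\|\,\|r^{k+1}\|,
\]
where $\varepsilon_x^{k+1}:=\xi^{k+1}+\nabla f(x^{k+1})+Az^{k+1}$ and $\varepsilon_y^{k+1}:=\eta^{k+1}+\nabla g(y^{k+1})+Bz^{k+1}$. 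Boundedness of the iterates then yields a constant $C'$ such that $|F(x^{k+1},y^{k+1})-F(\bar x,\bar y)|\le C'(\|\varepsilon_x^{k+1}\|+\|\varepsilon_y^{k+1}\|+\|r^{k+1}\|)$, so the square of the objective gap is dominated by the KKT residual squared. Taking a minimum and square-rooting the $o(1/k)$ bound from the previous step produces \eqref{rate-funval}.

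The main obstacle is the third step: getting the two-sided control on the objective gap uniformly in $k$. The lower bound via the saddle-point inequality is immediate, but the upper bound requires that we can choose specific subgradients whose residuals coincide with the quantities that were already shown to be $o(1/k)$; the manipulation in the second stage is what makes this possible, and the boundedness of $\{(x^k,y^k,z^k)\}$ from Theorem~\ref{Conver-Alg}(b) is what converts inner products into the desired norm bounds. Everything else is routine telescoping and an application of Lemma~\ref{prop-lem}.
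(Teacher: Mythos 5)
Your proposal is correct and follows essentially the same route as the paper's own proof: telescoping the descent inequality \eqref{case-II2} to get summability, invoking Lemma \ref{prop-lem}, expressing the KKT residuals via the first-order optimality conditions of \eqref{Iter-New-2} exactly as in \eqref{ine-x}--\eqref{ine-z} (your $(\tau-1)\sigma A r^{k+1}$ is the paper's $(1-\tau^{-1})A(z^{k+1}-z^k)$), and then deriving \eqref{rate-funval} from the two-sided bound on the objective gap using the saddle-point inequality and convexity together with boundedness of the iterates. No gaps worth noting.
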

\noindent{\bf Proof}. For each $i$, let  $\overline{\phi}_i$ be
defined by \eqref{Notation_dtp2} and
$$
 a_i: = \frac{-\tau+ \alpha\min\big(1+\tau,\; 1 + \tau^{-1}
\big)} {\tau^2\sigma}\|z^{i+1} - z^i\|^2 + \|x^{i+1} - x^i\|^2_{H_f}
 + \|y^{i+1} - y^i\|^2_{M_g}.
$$
 Since $\tau \in (0, (1+\sqrt{5})/2)$,
$\alpha \in ({\tau}/{\min(1+\tau, 1+\tau^{-1})}, \; 1]$,
$\widehat{\Sigma}_f + S \;\succeq\;  0$  and $\widehat{\Sigma}_g + T \succeq \frac{1}{2}
\widehat{\Sigma}_g + T   \succeq 0$,
we have $-\tau+ \alpha\min(1+\tau,\; 1 + \tau^{-1}) > 0$, $a_i \ge
0$ and $\overline{\phi}_i + \big(1-\alpha \min(\tau, \; \tau^{-1})
\big)\sigma\|r^i\|^2 + \alpha\xi_i \ge 0$, for any  $i \ge 1$. It
follows from \eqref{case-II2} and the definitions of $t_{i+1}$ and
$r^{i+1}$ that for any  $i \ge 1$ we have
\begin{eqnarray*}
a_i & = & t_{i+1} + \big(-\tau + \alpha\min(1 + \tau,
1+\tau^{-1})\big)\sigma \|r^{i+1}\|^2  \nn\\
& \le &   \Big[\overline{\phi}_i + \big( 1-\alpha\min(\tau,
\;\tau^{-1}) \big)\sigma\|r^i\|^2 + \alpha\xi_i \Big] -
\Big[\overline{\phi}_{i+1} + \big( 1-\alpha\min(\tau, \;
\tau^{-1})\big)\sigma \|r^{i+1}\|^2 + \alpha\xi_{i+1} \Big].
\end{eqnarray*}
For any  $k \ge 1$, summing the above inequality over $i = 1,
\ldots, k$, we obtain
\begin{equation*}
\sum_{i=1}^k a_i\le \overline{\phi}_1 + \big( 1-\alpha \min(\tau, \;
\tau^{-1}) \big)\sigma\|r^1\|^2 + \alpha\xi_1.
\end{equation*}
From the above inequality, we have $\sum_{i=1}^{\infty} a_i < +
\infty$. Then, by Lemma \ref{prop-lem} we get $\min_{1 \le i \le k}
\{a_i \} = o(1/k)$, that is
\begin{equation}\label{lima_i}
  \min_{1 \le i \le k}\big\{\|z^{i+1} - z^i\|^2 + \|x^{i+1} - x^i\|^2
 + \|y^{i+1} - y^i\|^2\big\}=o(1/k).
\end{equation}

%since $H_f \succ 0$ and $M_g \succ 0$.}

It follows from the first-order optimality condition of
\eqref{Iter-New-2} that
\[ \nabla f(x^i) + A [z^i + \sigma(A^*x^{i+1} +
B^*y^i - c)] + (\widehat{\Sigma}_f + S)(x^{i+1} - x^i)  \in -
\partial p(x^{i+1}). \nn
\]
And then from the definition of $z^{i+1}$ in \eqref{Iter-New-2}, we
have
\begin{eqnarray}\label{ine-x}
&  &  \hspace{-0.7cm}\nabla f(x^{i+1}) - \nabla f(x^i) + (1 - \tau^{-1})A(z^{i+1} -
z^i)
  + \sigma A B^*(y^{i+1} - y^i) - (\widehat{\Sigma}_f + S)(x^{i+1} - x^i) \nn \\
&  &  \hspace{-0.7cm}\quad \in \partial p(x^{i+1}) + \nabla f(x^{i+1}) + Az^{i+1}.
\end{eqnarray}
Similarly, we get
\begin{equation}\label{ine-y}
 \nabla g(y^{i+1}) - \nabla g(y^i) + (1 - \tau^{-1})B(z^{i+1} -
z^i) - (\widehat{\Sigma}_g + T)(y^{i+1} - y^i)
  \in \partial q(y^{i+1}) + \nabla g(y^{i+1}) + Bz^{i+1}.
\end{equation}
It follows from \eqref{Iter-New-2} that
\[\label{ine-z} \|A^*x^{i+1} + B^* y^{i+1} - c\|^2 = (\tau\sigma)^{-2}\|z^{i+1} - z^i\|^2. \]
By using the Cauchy-Schwarz inequality, \eqref{ine-x}, \eqref{ine-y}
and \eqref{ine-z}, we have
\begin{eqnarray*}
&  &  \hspace{-0.7cm} d^2\big(0, \; \partial p(x^{i+1}) + \nabla f(x^{i+1}) +  A
z^{i+1}\big) + d^2\big(0, \; \partial q(y^{i+1}) + \nabla g(y^{i+1})
+ B z^{i+1}\big) \\
&  &  \hspace{-0.1cm} + \|A^*x^{i+1} + B^*y^{i+1} - c\|^2  \nn \\
&  &  \hspace{-0.7cm} \le  4\|\nabla f(x^{i+1}) - \nabla f(x^i)\|^2 + 4(1 -
\tau^{-1})^2\|A(z^{i+1} - z^i)\|^2
    + 4\sigma^2\|AB^*(y^{i+1} - y^i)\|^2 \nn \\
&  & \hspace{-0.1cm}   + 4\|(\widehat{\Sigma}_f + S)(x^{i+1} - x^i)\|^2 +
3\|\nabla g(y^{i+1}) - \nabla g(y^i)\|^2 + 3(1 -
\tau^{-1})^2\|B(z^{i+1} -
z^i)\|^2 \\
&  &  \hspace{-0.1cm}  + 3\|(\widehat{\Sigma}_g + T)(y^{i+1} -
y^i)\|^2 + (\tau\sigma)^{-2}\|z^{i+1} - z^i\|^2.
\end{eqnarray*}
It follows from the above inequality, Assumption
\ref{assump-fg-smooth} and \eqref{lima_i} that the assertion
\eqref{KKT-distance} is proved.

By using \eqref{optimalcon1}, for any $(x, y, z) \in {\cal X} \times
{\cal Y} \times {\cal Z}$, we obtain
\[ \big(p(x) + f(x) + q(y) + g(y)\big) -  \big(p(\bar x) + f(\bar x) + q(\bar y) + g(\bar y)\big)
 + \langle \bar{z}, A^*x + B^*y - c \rangle \ge 0. \nn \]
Setting $x=x^i$ and $y=y^i$ in the above inequality, we get
\[\label{ineq-1} \big(p(x^i) + f(x^i) + q(y^i) + g(y^i)\big)
 -  \big(p(\bar x) + f(\bar x) + q(\bar y) + g(\bar y)\big) \ge - \langle \bar{z}, A^* x^i + B^*y^i - c  \rangle.   \]
Note that $p$, $f$, $q$ and $g$ are convex functions and the sequence $\{(x^i,
y^i, z^i)\}$ generated by the Majorized iPADMM is bounded. For any
$u \in
\partial p(x^i)$ and $v \in   \partial q(y^i)$, using $A^*\bar{x} +
B^* \bar{y} = c$, we obtain
\begin{eqnarray*}
&  &\big(p(\bar x) + f(\bar x) + q(\bar y) + g(\bar y)\big) -
  \big(p(x^i) + f(x^i) + q(y^i) + g(y^i)\big) \\
&   & \quad  \ge \langle u + \nabla f(x^i), \bar{x} - x^i \rangle +
\langle v + \nabla g(y^i), \bar{y} - y^i \rangle \\
&  & \quad = \langle u + \nabla f(x^i) + Az^i, \bar{x} - x^i \rangle
+ \langle z^i, A^* x^i - A^* \bar{x} \rangle + \langle v + \nabla
g(y^i) + Bz^i, \bar{y} - y^i \rangle  \\
&  & \qquad \;\;+ \langle z^i, B^* y^i - B^* \bar{y} \rangle\\
&  & \quad = \langle u + \nabla f(x^i) + Az^i, \bar{x} - x^i \rangle
+ \langle v + \nabla g(y^i) + Bz^i, \bar{y} - y^i \rangle  + \langle
z^i, A^*x^i + B^* y^i - c \rangle,
\end{eqnarray*}
which, together with \eqref{KKT-distance} and \eqref{ineq-1},
implies \eqref{rate-funval}. The proof is complete. \hfill $\Box$

\subsection{The ergodic iteration-complexity}

In this subsection, we shall establish a worst-case  ergodic iteration-complexity for  the sequence  $\{(x^i, y^i, z^i)\}$  generated by the Majorized
iPADMM. Recall that $ \ztilde^{i+1}$  is defined  by
\eqref{ADMM-notation}. Let
\begin{eqnarray*}
 \widehat{x}^k = \frac{1}{k} \sum_{i=1}^k {x}^{i+1}, \qquad
    \widehat{y}^k = \frac{1}{k} \sum_{i=1}^k {y}^{i+1} \qquad \hbox{and}
     \qquad \widehat{z}^k = \frac{1}{k} \sum_{i=1}^k \ztilde^{i+1}.
\end{eqnarray*}

%%%%%%%%%%% lemma 4.2
\begin{lemma}\label{Lem:4.1}
  Suppose that Assumption  {\em\ref{assump-fg-smooth}}   holds.
Assume that $\tau \in (0,(1+\sqrt{5})/2)$ and for some $\alpha \in
({\tau}/{\min(1+\tau, 1+\tau^{-1})}, \; 1]$,
$$
\widehat{\Sigma}_f + S \succeq 0, \qquad H_f \succeq 0,  \qquad
\frac{1}{2}\widehat{\Sigma}_g + T \succeq 0 \qquad \hbox{and} \qquad
  M_g \succ 0,
$$
 where $H_f$ and $M_g$ are defined in \eqref{H-M}.
Then, for any  $k \ge  1$ and $(x, y, z) \in {\cal X} \times {\cal
Y} \times {\cal Z}$, we have
\begin{eqnarray}\label{case-II-rate}
 &  & \hspace{-0.7cm}
\big(p(\widehat{x}^k)+q(\widehat{y}^k)\big) - \big(p(x) + q(y)\big)
+ \langle \widehat{x}^k - x, \nabla f(x) + A z \rangle
  + \langle \widehat{y}^k - y, \nabla g(y) + B z \rangle \nn \\[5pt]
 &  & \;  + \langle \widehat{z}^k - z, -(A^* x + B^* y - c) \rangle \leq
  \frac{ \phi_1(x, y, z) + \big( 1-\alpha \min(\tau, \tau^{-1})\big)\sigma \|r^1\|^2+ \alpha\xi_1}{2k}.
\end{eqnarray}
\end{lemma}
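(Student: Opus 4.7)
The plan is to start from the per-iteration inequality \eqref{case-II0} established in the previous proposition, sum it from $i=1$ to $i=k$ so that the bracketed quantities telescope, discard non-negative remainders, and then apply Jensen's inequality to the averages.

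First I would instantiate \eqref{case-II0} at each index $i\ge 1$ with the same test point $(x,y,z)$, producing an inequality that upper-bounds
\[
E_{i+1}(x,y,z) := \big(p(x^{i+1})+q(y^{i+1})\big) - (p(x)+q(y)) + \langle x^{i+1}-x,\nabla f(x)+Az\rangle + \langle y^{i+1}-y,\nabla g(y)+Bz\rangle + \langle \ztilde^{i+1}-z,-(A^*x+B^*y-c)\rangle
\]
by $\tfrac12\big(\Phi_i - \Phi_{i+1}\big) - \tfrac12\big(t_{i+1} + \kappa\sigma\|r^{i+1}\|^2\big)$, where $\Phi_i := \phi_i(x,y,z) + (1-\alpha\min(\tau,\tau^{-1}))\sigma\|r^i\|^2 + \alpha\xi_i$ and $\kappa := -\tau+\alpha\min(1+\tau,1+\tau^{-1})$. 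Under the standing hypotheses $\tau\in(0,(1+\sqrt5)/2)$ and $\alpha\in(\tau/\min(1+\tau,1+\tau^{-1}),1]$ a routine check shows $\kappa>0$ and $1-\alpha\min(\tau,\tau^{-1})\ge 0$; combined with $H_f\succeq 0$ and $M_g\succ 0$ this yields $t_{i+1}\ge 0$, and combined with $\widehat\Sigma_f+S\succeq 0$ and $\tfrac12\widehat\Sigma_g+T\succeq 0$ (so that $\widehat\Sigma_g+T\succeq 0$) it gives $\phi_{k+1}(x,y,z)\ge 0$ and $\xi_{k+1}\ge 0$ for arbitrary $(x,y,z)$.

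Next I would sum over $i=1,\dots,k$. The telescoping collapses the $\Phi$-terms to $\tfrac12(\Phi_1-\Phi_{k+1})$, while $\tfrac12\sum_i(t_{i+1}+\kappa\sigma\|r^{i+1}\|^2)\ge 0$ can be discarded. Non-negativity of $\Phi_{k+1}$ then gives
\[
\sum_{i=1}^k E_{i+1}(x,y,z) \;\le\; \tfrac12\Phi_1 \;=\; \tfrac12\big[\phi_1(x,y,z) + (1-\alpha\min(\tau,\tau^{-1}))\sigma\|r^1\|^2 + \alpha\xi_1\big].
\]

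Finally, I would convert this bound on the running sum into a bound at the average. The linear-in-iterate pieces of $E_{i+1}$ (the three inner-product terms) are exactly linear in $(x^{i+1},y^{i+1},\ztilde^{i+1})$, so summing and dividing by $k$ replaces them by the same expressions at $(\widehat x^k,\widehat y^k,\widehat z^k)$. For the non-smooth terms, convexity of $p$ and $q$ gives $p(\widehat x^k)\le\tfrac1k\sum p(x^{i+1})$ and $q(\widehat y^k)\le\tfrac1k\sum q(y^{i+1})$ via Jensen's inequality, which preserves the direction of the bound since $p$ and $q$ appear with a $+$ sign on the left-hand side. Dividing by $k$ yields exactly \eqref{case-II-rate}. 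The only subtlety worth flagging is verifying the sign conventions so that Jensen's inequality indeed points the right way and that the non-negativity of $\Phi_{k+1}$ requires precisely the hypothesis $\tfrac12\widehat\Sigma_g+T\succeq 0$ (rather than the weaker $M_g\succ 0$) to handle the $\|y^{k+1}-y\|^2_{\widehat\Sigma_g+T}$ term in $\phi_{k+1}$; everything else is bookkeeping.
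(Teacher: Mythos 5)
Your proposal is correct and follows essentially the same route as the paper: instantiate \eqref{case-II0} at a fixed test point, verify that the hypotheses make the telescoping quantity $\Phi_i$ and the discarded remainder nonnegative, sum over $i=1,\dots,k$, and finish with the convexity of $p$ and $q$ (Jensen) on the averaged iterates. The sign and nonnegativity checks you flag are exactly the ones the paper performs, so there is nothing to add.
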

\begin{proof} By the assumptions that  $\tau \in (0,(1+\sqrt{5})/2)$, $\alpha \in
({\tau}/{\min(1+\tau, 1+\tau^{-1})}, \; 1]$, $\widehat{\Sigma}_f
+ S  \succeq 0$,  $H_f \succeq 0$, $\frac{1}{2}\widehat{\Sigma}_g +
T \succeq 0$ and $M_g \succ 0,$ from \eqref{Notation_dtp0} and
\eqref{Notation_dtp1}, for any  $i \ge 1$, we get
\[ \phi_i(x,y,z) + \big( 1-\alpha \min(\tau, \; \tau^{-1}) \big)\sigma\|r^i\|^2 + \alpha\xi_i  \ge 0 \;\; \hbox{and}
\;\; t_{i+1} + \big(-\tau + \alpha\min(1 +\tau, \;
1 + \tau^{-1}) \big)\sigma \|r^{i+1}\|^2  \ge 0.\nn\] Then, it
follows from \eqref{case-II0} that
\begin{eqnarray*}
  &  & \big(p({x}^{i+1})+q({y}^{i+1})\big) - \big(p(x) + q(y)\big) + \langle {x}^{i+1} - x, \nabla f(x) + Az \rangle
  + \langle {y}^{i+1} - y, \nabla g(y) + Bz \rangle \nn \\
  &  & \quad \;  + \langle \ztilde^{i+1} - z, -(A^*x + B^*y - c) \rangle   \nn \\
  &  & \; \le \frac{1}{2}\Big\{ \big[ {\phi}_i(x, y, z) + \big( 1-\alpha \min(\tau, \; \tau^{-1}) \big)\sigma\|r^i\|^2 + \alpha\xi_i \big]  -
\big[ {\phi}_{i+1}(x, y, z)  + \big( 1-\alpha\min(\tau, \;
\tau^{-1})\big)\nn \\
  &  & \qquad \times \sigma\|r^{i+1}\|^2 + \alpha\xi_{i+1}
\big]\Big\}.
\end{eqnarray*}
Summing the above inequalities over $i = 1, \ldots, k$, we obtain
\begin{eqnarray*}
  &  & \sum_{i=1}^k\Big(p({x}^{i+1})+q({y}^{i+1})\Big) - k\big(p(x) + q(y)\big)
  + \Big\langle \sum_{i=1}^k {x}^{i+1} - k x, \nabla f(x) + A z \Big\rangle
   \nn \\
  &  & \quad \; + \Big\langle \sum_{i=1}^k {y}^{i+1} - k y, \nabla g(y) + B z \Big\rangle
   + \Big\langle \sum_{i=1}^k \ztilde^{i+1} - k z, -(A^*x + B^*y - c) \Big\rangle   \nn \\
  &  & \; \le \frac{1}{2}\big[{\phi}_1(x, y, z) + \big( 1-\alpha \min(\tau, \; \tau^{-1}) \big)\sigma\|r^1\|^2 + \alpha\xi_1 \big].
\end{eqnarray*}
Since  $(\widehat{x}^k, \widehat{y}^k,
\widehat{z}^k)$ is a convex combination of $({x}^2, {y}^2,\ztilde^2),  \ldots, ({x}^{k+1}, {y}^{k+1}, \ztilde^{k+1})$,     for any $(x, y, z)\in {\cal X} \times
{\cal Y} \times {\cal Z}$, we obtain that
\begin{eqnarray*}
  &  & \frac{1}{k}\sum_{i=1}^k\Big(p({x}^{i+1})+q({y}^{i+1})\Big) -
  \big(p(x) + q(y)\big) + \Big\langle  \widehat{x}^k -  x, \nabla f(x) + A z \Big\rangle
  + \Big\langle  \widehat{y}^k - y, \nabla g(y) + B z \Big\rangle \nn \\
  &  & \quad \;  + \Big\langle \widehat{z}^k - z, -(A^*x + B^*y - c) \Big\rangle  \le  \frac{1}{2k}\big[{\phi}_1(x, y, z) + \big( 1-\alpha \min(
\tau, \;\tau^{-1}) \big)\sigma\|r^1\|^2 + \alpha\xi_1 \big].
\end{eqnarray*}
By using the convexity of  $p(\cdot)$ and $q(\cdot)$,  we obtain
$$  p(\widehat{x}^k)+q(\widehat{y}^k) \le \frac{1}{k}\sum_{i=1}^k\big(p({x}^{i+1})+q({y}^{i+1})\big).
 $$
The assertion \eqref{case-II-rate} then follows from the above two inequalities
immediately.
\end{proof}

%%%%%%%%%%%% Theorem4.4
\begin{theorem}
Suppose that  Assumptions {\em\ref{assump-fg-smooth}}   and
{\em\ref{assump-CQ}} hold.
Assume that $\tau \in (0, (1+\sqrt{5})/2)$ and for some $\alpha \in
({\tau}/{\min(1+\tau, 1+\tau^{-1})}, \; 1]$,
\[  \widehat{\Sigma}_f + S \succeq 0, \quad H_f \succeq 0, \quad
\frac{1}{2}\Sigma_f + S + \sigma A A^* \succ 0, \quad
\frac{1}{2}\widehat{\Sigma}_g + T   \succeq 0 \quad \hbox{and} \quad
  M_g \succ 0, \nn \]
 where $H_f$ and $M_g$ are defined in \eqref{H-M}.
Then the Majorized iPADMM has a worst-case $O(1/k)$ ergodic
iteration-complexity.
\end{theorem}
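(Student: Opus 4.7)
The plan is to reduce the ergodic iteration-complexity claim to Lemma \ref{Lem:4.1} combined with a uniform boundedness argument for the averaged iterates. The first step I would take is to rewrite the inequality \eqref{case-II-rate} in the language of the variational inequality $\hbox{VI}({\cal W}, F, \theta)$ from \eqref{A-VI}--\eqref{A-uFu}. Setting $\widehat{u}^k := (\widehat{x}^k, \widehat{y}^k)$ and $\widehat{w}^k := (\widehat{x}^k, \widehat{y}^k, \widehat{z}^k)$, the left-hand side of \eqref{case-II-rate} is exactly $\theta(\widehat u^k) - \theta(u) + \langle \widehat w^k - w, F(w)\rangle$, so by Definition \ref{Def-approx-solution} it suffices to exhibit a constant $C>0$, independent of $k$, such that
\[
\sup_{w \in {\cal B}(\widehat{w}^k)} \phi_1(x,y,z) \le C.
\]
Indeed, $r^1$ and $\xi_1$ in the numerator of \eqref{case-II-rate} are $k$-independent constants determined by the initialization, so the whole numerator is uniformly bounded, giving the $O(1/k)$ rate.

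Second, I would establish this uniform bound on $\phi_1$ over the unit ball $\mathcal{B}(\widehat w^k)$. By \eqref{Notation_dtp0}, $\phi_1(x,y,z)$ is a quadratic in $w$ whose coefficients are determined by $(x^1, y^1, z^1)$ and by the operator norms of $\widehat{\Sigma}_f + S$, $\widehat{\Sigma}_g + T$ and $\sigma BB^*$; none of these depend on $k$. The only $k$-dependence enters through the moving center $\widehat w^k$ of the ball. Since the hypotheses of the theorem contain all the assumptions of Theorem \ref{Conver-Alg}(b), that result ensures convergence, hence boundedness, of the sequence $\{(x^k,y^k,z^k)\}$; the companion sequence $\{\tilde z^{k+1}\} = \{z^k + \sigma r^{k+1}\}$ is likewise bounded because $r^{k+1}\to 0$. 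As $\widehat w^k$ is a convex combination of $(x^{i+1},y^{i+1},\tilde z^{i+1})$ for $1\le i \le k$, the sequence $\{\widehat w^k\}$ is uniformly bounded by some $D>0$. The triangle inequality then gives $\|w\| \le D+1$ for every $w \in \mathcal{B}(\widehat w^k)$, and a routine estimate of each quadratic term in $\phi_1$ supplies the desired $C$.

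The main obstacle is precisely the step just described: one must be careful that the bound on $\phi_1$ is genuinely $k$-independent, even though $\phi_1$ is being evaluated on a unit ball whose center $\widehat w^k$ itself varies with $k$. Appealing to Theorem \ref{Conver-Alg}(b) resolves this by confining $\widehat w^k$ to a bounded set for all $k$. With this in hand, \eqref{case-II-rate} immediately yields
\[
\sup_{w \in \mathcal B(\widehat w^k)}\bigl\{\theta(\widehat u^k) - \theta(u) + \langle \widehat w^k - w, F(w)\rangle\bigr\} \le \frac{C + \bigl(1-\alpha\min(\tau,\tau^{-1})\bigr)\sigma\|r^1\|^2 + \alpha\xi_1}{2k} = O(1/k),
\]
which is exactly the worst-case $O(1/k)$ ergodic iteration-complexity claimed in the theorem.
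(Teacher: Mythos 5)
Your proposal is correct, and its first half coincides with the paper's: both proofs start from Lemma \ref{Lem:4.1} and reduce the claim to showing that the numerator in \eqref{case-II-rate}, essentially $\sup_{w \in {\cal B}(\widehat{w}^k)} \phi_1(x,y,z)$, is bounded by a constant independent of $k$. Where you diverge is in how that uniform bound is obtained. You argue qualitatively: the hypotheses here are exactly those of Part (b) of Theorem \ref{Conver-Alg}, so $\{(x^k,y^k,z^k)\}$ converges and is therefore bounded, $\{\ztilde^{k+1}\}$ is bounded since $r^{k+1}\to 0$, hence the averaged point $\widehat{w}^k$ lives in a fixed bounded set, and the nonnegative quadratic $\phi_1$ (note $\widehat{\Sigma}_g+T\succeq \frac{1}{2}\widehat{\Sigma}_g+T\succeq 0$, so every term is a genuine seminorm) is bounded on the union of the unit balls ${\cal B}(\widehat{w}^k)$. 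The paper instead runs a quantitative chain of Cauchy--Schwarz estimates, re-centering $\phi_1$ at $\widehat{w}^k$ and then at a point of a compact subset ${\cal W}^*_0$ of the solution set, and bounding the iterates directly from the descent inequality \eqref{case-II2} via the constants $D_1$, $D_2$, $D_3$; this requires an extra decomposition of $\widehat{z}^k$ as a combination of averages of $z^{i+1}$, since \eqref{case-II2} controls $z^{i+1}$ rather than $\ztilde^{i+1}$. Your route is shorter and avoids that bookkeeping entirely, at the cost of producing no explicit constant $D$ in the final bound $D/(2k)$ (the paper's $D_1$ is itself defined by a supremum over $k$, so its explicitness is also partly nominal). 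Both arguments ultimately lean on Theorem \ref{Conver-Alg}(b), so no additional hypotheses are consumed; your proof is a valid and arguably cleaner alternative.
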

\begin{proof}
From \eqref{case-II-rate} in Lemma \ref{Lem:4.1} and the definitions
of $\phi_1(x, y, z)$,  $r^1$ and $\xi_1$, we know
\begin{eqnarray}\label{ergodic-1}
&  & \big(p(\widehat{x}^k)+q(\widehat{y}^k)\big) - \big(p(x) +
q(y)\big) + \langle \widehat{x}^k - x, \nabla f(x) + A z \rangle
  + \langle \widehat{y}^k - y, \nabla g(y) + B z \rangle
\nn \\[5pt]
&  & \quad \;
+ \langle \widehat{z}^k - z, -(A^*x + B^*y - c) \rangle  \nn \\
&   & \;  \le \frac{1}{2k}\Big((\tau \sigma)^{-1} \|z^1 - z\|^2 + \|x^1 - x\|^2_{\widehat{\Sigma}_f + S}
    + \|y^1 - y\|^2_{\widehat{\Sigma}_g +T} + \sigma \|A^*x + B^*y^1 - c\|^2\nn \\[5pt]
&  & \quad +\;   \big(1-\alpha \min(\tau, \tau^{-1})\big)\tau^{-2}\sigma^{-1} \|z^1 - z^0\|^2+
 \alpha\|y^1 - y^0\|^2_{{\widehat{\Sigma}_g +T}}\Big).
\end{eqnarray}
Note that  $\widehat{\Sigma}_f + S \succeq 0$ and
$\widehat{\Sigma}_g + T \succeq \frac{1}{2}\widehat{\Sigma}_g + T
\succeq 0$. By using the Cauchy-Schwarz inequality and $z^{k+1} -
z^1 =k \tau\sigma{(A^* \widehat{x}^k + B^*\widehat{y}^k - c)}$,
for any $w \in {\cal B}(\widehat{w}^k)$ and $\bar w:= (\bar{x},
\bar{y}, \bar{z}) \in {\cal W}^*_0 $, which is a nonempty compact subset of ${\cal W}^*$, we have
\begin{eqnarray}\label{ergodic-2}
&  &\hspace{-0.9cm} (\tau \sigma)^{-1} \|z^1 - z\|^2 + \|x^1 -
x\|^2_{\widehat{\Sigma}_f + S}  + \|y^1 - y\|^2_{\widehat{\Sigma}_g
+T} + \sigma \|A^*x + B^*y^1 - c\|^2 \nn\\
&  &  \hspace{-0.8cm} = (\tau \sigma)^{-1} \|z^1 - \widehat{z}^k +
\widehat{z}^k - z\|^2 + \|x^1 - \widehat{x}^k + \widehat{x}^k -
x\|^2_{\widehat{\Sigma}_f + S}  + \|y^1 - \widehat{y}^k +
\widehat{y}^k - y\|^2_{\widehat{\Sigma}_g +T  } \nn \\
&  & \;  + \sigma \|  (A^* \widehat{x}^k + B^*\widehat{y}^k  - c )+ (A^*x-
A^*\widehat{x}^k )+  (B^*{y}^1- B^*\widehat{y}^k)\|^2 \nn \\
&  & \hspace{-0.8cm} \le 2(\tau \sigma)^{-1} \|z^1 - \widehat{z}^k
\|^2 + 2\|x^1 - \widehat{x}^k \|^2_{\widehat{\Sigma}_f + S}  +
2\|y^1 - \widehat{y}^k
\|^2_{\widehat{\Sigma}_g +T + \sigma BB^*}      + 4\sigma \|A^* \widehat{x}^k + B^*\widehat{y}^k  - c \|^2  \nn \\
&  & \; + 2(\tau \sigma)^{-1} \|\widehat{z}^k - z\|^2 + 2\|
\widehat{x}^k - x\|^2_{\widehat{\Sigma}_f + S+2\sigma AA^*}  + 2\|
\widehat{y}^k - y\|^2_{\widehat{\Sigma}_g + T }\nn \\
&  & \hspace{-0.8cm}  \le 2[(\tau \sigma)^{-1} \|z^1 - \widehat{z}^k
\|^2 + \|x^1 - \widehat{x}^k \|^2_{\widehat{\Sigma}_f + S}  +
\|y^1 - \widehat{y}^k
\|^2_{\widehat{\Sigma}_g +T + \sigma BB^*}]
   + \frac{4}{k^2\tau^2\sigma} \|z^{k+1} - z^1\|^2   + 2D_1
\nn \\
&  & \hspace{-0.8cm}  \le 4[(\tau \sigma)^{-1} \|\bar z - \widehat{z}^k
\|^2 + \|\bar x - \widehat{x}^k \|^2_{\widehat{\Sigma}_f + S}  +
\|\bar y - \widehat{y}^k
\|^2_{\widehat{\Sigma}_g +T + \sigma BB^* }] \nn \\
&  & \;
  + \frac{8}{k^2\tau^2\sigma} \|\bar z- z^{k+1}  \|^2   +2 D_1 + 4D_2,
\end{eqnarray}
where the constants $D_1$  and $D_2$ are defined, respectively,   by
$$
D_1: = \sup_{k\ge 1} \max_{w \in
{\cal B}(\widehat{w}^k)}\left \{(\tau \sigma)^{-1} \|\widehat{z}^k - z\|^2 + \|
\widehat{x}^k - x\|^2_{\widehat{\Sigma}_f + S + 2\sigma AA^*}  + \|
\widehat{y}^k - y\|^2_{\widehat{\Sigma}_g + T  }\right \}
$$
and
$$
D_2: =  \max_{(\bar{x}, \bar{y}, \bar{z}) \in {\cal W}_0^* }\left\{
(\tau \sigma)^{-1} \|z^1-\bar z \|^2 + \|x^1-\bar x
\|^2_{\widehat{\Sigma}_f + S}  + \|y^1-\bar y
\|^2_{\widehat{\Sigma}_g +T + \sigma BB^*} + \frac{2}{\tau^2\sigma}
\|z^1-\bar z  \|^2 \right\}.
$$

Let
$$
D_3: =   \big( 1-\alpha \min(\tau, \; \tau^{-1})
\big)\tau^{-2}\sigma^{-1}\|z^1-z^0\|^2 + \alpha
\|y^1-y^0\|^2_{{\widehat \Sigma}_g +T}  .
$$
 It then follows from Part (b) of Theorem
\ref{Conver-Alg} that for any $i\ge 1$ and any $\bar w:= (\bar{x}, \bar{y}, \bar{z}) \in {\cal W}^*$, we have
\[\label{bound:add}
(\tau \sigma)^{-1} \|\bar z -  {z}^{i+1}
\|^2 + \|\bar x - {x}^{i+1} \|^2_{\widehat{\Sigma}_f + S}  +
\|\bar y - {y}^{i+1}
\|^2_{\widehat{\Sigma}_g +T + \sigma BB^* }
\le  D_2 +D_3 ,
\]
which, together with the convexity of the quadratic function $\|\cdot \|^2$, implies that for any $k\ge 1$ and $\bar w:= (\bar{x}, \bar{y}, \bar{z}) \in {\cal W}^*$,
\[\label{bound:addup1}
(\tau \sigma)^{-1}\left \|\bar z -   \frac{1}{k} \sum_{i=1}^{k}
{z}^{i+1} \right \|^2 + \|\bar x - \widehat {x}^k
\|^2_{\widehat{\Sigma}_f + S}  + \|\bar y - \widehat {y}^k
\|^2_{\widehat{\Sigma}_g +T + \sigma BB^* } \le  D_2 +D_3 .
\]
Then, by using the fact that for $k\ge 2$,
$$
\widehat{z}^k = \frac{1}{k} \sum_{i=1}^k \ztilde^{i+1} =\tau^{-1}
\frac{1}{k} \sum_{i=1}^k z^{i+1} + (1-\tau^{-1})   \left(\frac{1}{k}z^1+\frac{k-1}{k} \frac{1}{k-1}
{\sum_{i=1}^{k-1}} z^{i+1}\right),
$$
we obtain that for any $k\ge 2$ and any $\bar w:= (\bar{x}, \bar{y}, \bar{z}) \in {\cal W}^*$,
\begin{equation}\label{eq:bound3}
\begin{array}[b]{l}
 (\tau \sigma)^{-1} \|\bar z -  \widehat {z}^{k}\|^2
\\
 \displaystyle   =  (\tau \sigma)^{-1}\left  \|  \tau^{-1}  \left(\frac{1}{k} \sum_{i=1}^k z^{i+1} -\bar z\right) + (1-\tau^{-1})   \left(\frac{1}{k}z^1+\frac{k-1}{k} \frac{1}{k-1}
{\sum_{i=1}^{k-1}} z^{i+1}-\bar z\right)        \right     \|^2
\\
 \displaystyle   \le 2  (\tau \sigma)^{-1}\tau^{-2} \left  \|    \frac{1}{k} \sum_{i=1}^k z^{i+1} -\bar z\right\|^2 +2 (\tau \sigma)^{-1} (1-\tau^{-1})^2 \left\|    \left(\frac{1}{k}z^1+\frac{k-1}{k} \frac{1}{k-1}
{\sum_{i=1}^{k-1}} z^{i+1}-\bar z\right)        \right     \|^2
\\[5mm]
\le   \displaystyle    2[\tau^{-2} + (1-\tau^{-1})^2] (D_2 +D_3) .
\end{array}
\end{equation}
It follows from \eqref{bound:addup1} and \eqref{eq:bound3} that
\begin{eqnarray*} &  & 4[(\tau \sigma)^{-1} \|\bar z - \widehat{z}^k \|^2 + \|\bar x
- \widehat{x}^k \|^2_{\widehat{\Sigma}_f + S}  + \|\bar y -
\widehat{y}^k \|^2_{\widehat{\Sigma}_g +T + \sigma BB^* }]  \nn \\
 &  & \quad \le 8[\tau^{-2} + (1-\tau^{-1})^2] (D_2 +D_3) + 4(D_2 +D_3).
\end{eqnarray*}
By using \eqref{bound:add}, we have
\[ \frac{8}{k^2\tau^2\sigma} \|\bar z- z^{k+1}  \|^2   \le  8\tau^{-1}(\tau\sigma)^{-1} \|\bar z- z^{k+1}  \|^2  \le   8\tau^{-1}(D_2 +
D_3). \nn\]
 Therefore,
 from \eqref{ergodic-1},  \eqref{ergodic-2} and the above two
 inequalities, we know that for any $k\ge 2$,
\begin{eqnarray*}
&  & \big(p(\widehat{x}^k)+q(\widehat{y}^k)\big) - \big(p(x) +
q(y)\big) + \langle \widehat{x}^k - x, \nabla f(x) + A z \rangle
  + \langle \widehat{y}^k - y, \nabla g(y) + B z \rangle
\nn \\[5pt]
&  & \quad \; + \langle \widehat{z}^k - z, -(A^*x + B^*y - c)
\rangle    \le  \frac{D}{2k},
\end{eqnarray*}
where the constant $D$ is given by
$$
{D := 8 [ \tau^{-2} + (1-\tau^{-1})^2+   \tau^{-1}] (D_2 +D_3)
+2D_1 + 8D_2 +5D_3.}
$$
Therefore, for any given $\varepsilon > 0$, after at most $k :=
\lceil \frac{D}{2 \varepsilon}   \rceil $ iterations, we have
\begin{eqnarray*}
&  & \big(p(\widehat{x}^k)+q(\widehat{y}^k)\big) - \big(p(x) +
q(y)\big) + \langle \widehat{x}^k - x, \nabla f(x) + A z \rangle
  + \langle \widehat{y}^k - y, \nabla g(y) + B z \rangle \nn \\[5pt]
&  & \quad \;  + \langle \widehat{z}^k - z, -(A^*x + B^*y - c)
\rangle \le \varepsilon \quad \qquad \forall\, w \in {\cal
B}(\widehat{w}^k) ,
\end{eqnarray*}
which means  that $(\widehat{x}^k, \widehat{y}^k, \widehat{z}^k)$ is an
approximate solution of VI$({\cal W}, F, \theta)$ with an accuracy
of $O(1/k)$. That is, a worst-case $O(1/k)$ ergodic
iteration-complexity of the Majorized iPADMM is established. The
proof is completed.
\end{proof}

%%%%%%%%%%%%%%%%%%%%%%%%%%%%%%%%%%%%
%% section 5
%%%%%%%%%%%%%%%%%%%%%%%%%%%%%%%%%%%%
\section{Numerical experiments}\label{Experiments}
\def\norm#1{\|#1\|}
\def\mc{\multicolumn}

We consider the following problem
to illustrate the benefit which can be brought about by using an
indefinite proximal term instead of
the standard requirement of a positive semidefinite proximal term
in applying the semi-proximal ADMM:
\[  \label{Num-experiments-P1}
    \min_{x\in \Re^n,\, y\in\Re^m} \Bigl\{ \frac{1}{2}\langle x, Qx \rangle  - \langle b,
x \rangle
+ \frac{\chi}{2}\norm{\Pi_{\Re_+^m}(d-Hx)}^2
+\varrho \norm{x}_1 + \delta_{\Re_+^m}(y)\; \Big | \; Hx + y = c  \Bigr\},
   \]
where
 $\norm{x}_1 := \sum_{i=1}^n |x_{i}|$, $\delta_{\Re^{m}_+}(\cdot)$ is the indicator function of $\Re^{m}_+$, $Q$
is an $n\times n$ symmetric and positive semidefinite matrix (may
not be positive definite), $H \in \Re^{m\times n}$, $b\in\Re^n$,  $c
\in \Re^m$ and $\varrho > 0$ are given data.
In addition, $d \leq c$ are given vectors, $\chi$ is a nonnegative penalty parameter,
and $\Pi_{\Re_+^m}(\cdot)$ denotes the projection onto $\Re_+^m$.
Observe that
when the parameter $\chi$ is chosen to be positive,
one may view the term $\frac{\chi}{2}\norm{\Pi_{\Re_+^m}(d-Hx)}^2$
as the penalty for failing to satisfy the soft constraint $Hx- d \geq 0$.

For   problem \eqref{Num-experiments-P1}, it can be expressed in the form
\eqref{ConvexP-G} by taking
$$
 f(x) = \frac{1}{2}\inprod{x}{Qx} - \inprod{b}{x} + \frac{\chi}{2}\norm{\Pi_{\Re_+^m}(d-Hx)}^2, \; p(x) = \varrho\norm{x}_1,\;
g(y) \equiv 0 , \; q(y) = \delta_{\Re^m_+}(y)
$$
with $A^* = H, \; B^* = I$.
The KKT system for   problem \eqref{Num-experiments-P1} is given by:
\begin{eqnarray}
 & H x + y -c =0, \quad \nabla f(x) + H^*\xi  +v  =0, \quad
 y \geq 0, \;\; \xi\geq 0, \;\; y\circ \xi = 0,\; \;  v\in  \partial \varrho\norm{x}_1,&
\label{eq-KKT2}
\end{eqnarray}
where $``\circ"$ denotes the elementwise product.
In our numerical experiments, we apply the Majorized iPADMM to solve the problem \eqref{Num-experiments-P1} by
using both the step-length parameters $\tau = 1.618$ and $\tau=1$.
We stop the iPADMM based on the following relative
residual on the KKT system:
\begin{eqnarray}
  \max\left\{ \frac{\norm{H x^{k+1}+y^{k+1}-c}}{1+\norm{c}},
\frac{\norm{\nabla f(x^{k+1}) +H^* \xi^{k+1}+v^{k+1}}}{1+\norm{b}} \right\} \;\leq \; 10^{-6}.
\label{eq-stopping2}
\end{eqnarray}
Note that in the process of computing the iterates $x^{k+1}$ and $y^{k+1}$ from
the iPADMM, we can generate the corresponding dual variables
$\xi^{k+1}$ and $v^{k+1}$ which satisfy the complementarity conditions
in \eqref{eq-KKT2}.
Thus we need not check the complementarity conditions in \eqref{eq-KKT2} since they
are satisfied exactly.

In our numerical experiments,
for a given pair of $(n,m)$, we generate the data for \eqref{Num-experiments-P1}
randomly as follows:
\begin{verbatim}
  Q1= sprandn(floor(0.1*n),n,0.1); Q = Q1'*Q1;
  H = sprandn(m,n,0.2); xx = randn(n,1); c = H*xx + max(randn(m,1),0); b = Q*xx;
\end{verbatim}
and we set the parameter
$\varrho = 5\sqrt{n}. $
We take $d=c-5e$, where $e$ is the vector of all ones.
As we can see, $Q$ is positive semidefinite but not positive definite.
Note that for the data generated from the above scheme, the optimal solution $(x^*,y^*)$ of
\eqref{Num-experiments-P1} has the property that
both $x^*$ and $y^*$ are nonzero vectors but each has
a significant portion of zero components. We have tested other schemes to generate
the data but the corresponding optimal solution is not interesting enough in that $y^*$
is usually the zero vector.

In the next two subsections,
we consider two separate cases for evaluating the performance of
 the Majorized iPADMM in solving \eqref{Num-experiments-P1}.

%%%%%%%%%%%%%%%%%%%%%%%%%%%%
\subsection{The case where the parameter $\chi$ in \eqref{Num-experiments-P1} is zero}

In the first case, we set the penalty parameter $\chi=0$.
Hence $f(x)$ is simply a convex quadratic function, and
we can omit the word
``Majorized" since there is no majorization on $f$ or $g$.
For this case, we have that
$\widehat{\Sigma}_f = Q = \Sigma_f$ and
 $\widehat{\Sigma}_g = 0 = \Sigma_g$.
We consider the following two choices of the proximal terms for the iPADMM.
\\[5pt]
(a) The proximal terms in the iPADMM are chosen according to \eqref{eq-SS} with
\begin{equation}
 S := \rho_1 I -   \widehat{\Sigma}_f - \sigma AA^* , \qquad \quad
 T := 0,
\label{eq-S1}
\end{equation}
where $\rho_1$ is  the constant
given in \eqref{eq-rho1}  i.e.,
\begin{equation}
   \rho_1 := 1.01 \lambda_{\max}\Big(\widehat{\Sigma}_f - \frac{1}{2}\Sigma_f
+\frac{1}{2}(1+\alpha)\sigma AA^*\Big). \label{eq-rho1new}
\end{equation}
In the notation of \eqref{eq-SS},
we fix $\alpha := \min(1.01\tau/\min(1+\tau,1+\tau^{-1}),1)$.
With the above choices
of the proximal terms, we have that
$$
 \widehat{\Sigma}_f + S + \sigma AA^* = \rho_1 I , \quad \qquad
 \widehat{\Sigma}_g + T + \sigma BB^* = \sigma  I.
$$
Furthermore, the conditions \eqref{condition-c-2-1} and
\eqref{condition-c-1} in Theorem
\ref{Conver-Alg} are satisfied.
Therefore the convergence of the iPADMM is ensured even though $S$ is an indefinite matrix.
\\[5pt]
(b) The proximal terms in the iPADMM are chosen
based on Part (a) of Theorem \ref{Conver-Alg} as follows:
\begin{equation}
  S := {\rho}_2 I - \widehat{\Sigma}_f - \sigma AA^*, \quad \qquad T := 0,
\label{eq-S2}
\end{equation}
where for a chosen parameter $\eta \in (0,1/2)$, say $\eta := 0.49$,  and $\gamma_2 := 1.1$,
\begin{eqnarray}
 {\rho}_2 :=  \lambda_{\max}\Big( \frac{1}{2}Q +  \gamma_2 (1-\eta)\sigma  AA^*\Big).
\label{eq-rho2}
\end{eqnarray}
Observe that $\rho_2$ is a smaller constant than $\rho_1$ and hence the proximal
term $S$ in \eqref{eq-S2} is more indefinite than the one in \eqref{eq-S1}.
In this case, we can easily check that the conditions in \eqref{condition-1}
for $S$ and $T$ are satisfied. In particular,
$\widehat{\Sigma}_f + S + \eta\sigma AA^* \succeq (1-\eta)\sigma\big(\lambda_{\max}(AA^*) I -AA^*\big)
+ (\gamma_2-1)(1-\eta)\sigma \lambda_{\max}(AA^*) I \succ 0.$
However, in order to ensure that the  iPADMM with the proximal terms chosen
in \eqref{eq-S2} is convergent, we need to monitor the
residual
\begin{eqnarray}
R^{k+1} := \|x^{k+1} -
x^k \|^2_{\widehat{\Sigma}_f} +  \|y^{k+1} -
y^k\|^2_{\widehat{\Sigma}_g  + \sigma BB^*} + \|r^{k+1}\|^2
\label{eq-R}
\end{eqnarray}
in condition \eqref{sum-bound} as follows.
At the $k$th iteration,  if
$\sum_{j=1}^{k+1} R^j  \geq 50$ and $R^{k+1} \geq {10}/{(k+1)^{1.1}}$,
restart the iPADMM (using the best iterate generated so far
as the initial point)
with a new $S$ as in \eqref{eq-S2} but
the parameter $\gamma_2$ in \eqref{eq-rho2} is
 increased by a factor of $1.1$. Obviously, the number of such restarts
is finite since eventually $\rho_2$ will be larger than
$\rho_1$ in \eqref{eq-rho1new}. In our numerical runs, we only
encounter  such a restart once  when testing all the instances.

Table \ref{table-1} reports the comparison between the iPADMM (whose
proximal term $S$ is the indefinite matrix mentioned in
\eqref{eq-S1}) and the semi-proximal ADMM in \cite{FsPoSunTse}
(denoted as sPADMM) where the proximal term $S$ is replaced by the
positive semidefinite matrix $\lambda_{\max}(Q + \sigma AA^*) I
-(Q + \sigma AA^*)$.
Table \ref{table-2} is the same as Table \ref{table-1} except that
the comparison is between the iPADMM (whose indefinite proximal term $S$ is given by
\eqref{eq-S2}) and the sPADMM.

We can see from the
results in both tables that the iPADMM can sometimes bring about 40--50\% reduction in
the number of iterations needed to solve the problem
\eqref{Num-experiments-P1} as compared to the sPADMM.
In addition, the iPADMM using the more aggressive indefinite
proximal term $S$ in \eqref{eq-S2}
sometimes takes substantially less iterations to solve the problems
as compared to the more conservative choice in \eqref{eq-S1}.

%%
%% Table 1
%%
\begin{table}[t]
\begin{footnotesize}
\begin{center}
\caption{Comparison between the number of iterations  taken by
 iPADMM (for $S$ given by \eqref{eq-S1})
and sPADMM (for $S=\lambda_{\max}(Q+\sigma AA^*)I-(Q+\sigma AA^*)$) to
solve the problem \eqref{Num-experiments-P1}, with $\chi=0$,  to the required accuracy
stated in \eqref{eq-stopping2}.}
\begin{tabular}{| c | c |  c | c|| c|c|c|}
\hline
 \mc{1}{|c|}{} &\mc{1}{c|}{} &\mc{1}{c|}{}&\mc{1}{c||}{}
&\mc{1}{c|}{} &\mc{1}{c|}{}&\mc{1}{c|}{}
\\[-5pt]
\mc{1}{|c|}{dim. of $H$ ($m\times n$)}
& \mc{1}{c|}{$\begin{array}{c}\mbox{sPADMM}\\ \tau=1.618\end{array}$}
&\mc{1}{c|}{$\begin{array}{c}\mbox{iPADMM}\\ \tau=1.618\end{array}$}
&\mc{1}{c||}{ratio ($\%$)}
& \mc{1}{c|}{$\begin{array}{c}\mbox{sPADMM}\\ \tau=1\end{array}$}
&\mc{1}{c|}{$\begin{array}{c}\mbox{iPADMM}\\ \tau=1\end{array}$}
&\mc{1}{c|}{ratio ($\%$)}
\\[2pt] \hline

$2000\times 1000$ & 8975  & 8571  &95.5 &11077  & 9317  &84.1
 \\[3pt]\hline
$2000\times 2000$ & 1807  & 1406  &77.8 & 1970  & 1399  &71.0
 \\[3pt]\hline
$2000\times 4000$ & 1224  &  714  &58.3 & 1254  &  767  &61.2
 \\[3pt]\hline
$2000\times 8000$ & 1100  &  613  &55.7 & 1103  &  642  &58.2
 \\[3pt]\hline
$4000\times 2000$ & 9863  & 9222  &93.5 &11919  &10141  &85.1
 \\[3pt]\hline
$4000\times 4000$ & 2004  & 1273  &63.5 & 2140  & 1468  &68.6
 \\[3pt]\hline
$4000\times 8000$ & 1408  &  829  &58.9 & 1396  &  860  &61.6
 \\[3pt]\hline
$4000\times 16000$ & 1749  &  913  &52.2 & 1771  &  938  &53.0
 \\[3pt]\hline
$8000\times 4000$ &10681  &10288  &96.3 &13187  &11111  &84.3
 \\[3pt]\hline
$8000\times 8000$ & 2037  & 1220  &59.9 & 2085  & 1298  &62.3
 \\[3pt]\hline
$8000\times 16000$ & 1594  &  917  &57.5 & 1630  &  956  &58.7
 \\[3pt]\hline

\end{tabular}
\label{table-1}
\end{center}
\end{footnotesize}
\end{table}
%%
%% Table 2
%%
\begin{table}[h]
\begin{footnotesize}
\begin{center}
\caption{Comparison between the number of iterations  taken by
 iPADMM (for $S$ given by \eqref{eq-S2})
and sPADMM (for $S= \lambda_{\max}(Q+\sigma AA^*)I-(Q+\sigma AA^*)$) to
solve the problem \eqref{Num-experiments-P1}, with $\chi=0$, to the required accuracy
stated in \eqref{eq-stopping2}.}
\begin{tabular}{| c | c |  c | c|| c|c|c|}
\hline
 \mc{1}{|c|}{} &\mc{1}{c|}{} &\mc{1}{c|}{}&\mc{1}{c||}{}
&\mc{1}{c|}{} &\mc{1}{c|}{}&\mc{1}{c|}{}
\\[-5pt]
\mc{1}{|c|}{dim. of $H$ ($m\times n$)}
& \mc{1}{c|}{$\begin{array}{c}\mbox{sPADMM}\\ \tau=1.618\end{array}$}
&\mc{1}{c|}{$\begin{array}{c}\mbox{iPADMM}\\ \tau=1.618\end{array}$}
&\mc{1}{c||}{ratio ($\%$)}
& \mc{1}{c|}{$\begin{array}{c}\mbox{sPADMM}\\ \tau=1\end{array}$}
&\mc{1}{c|}{$\begin{array}{c}\mbox{iPADMM}\\ \tau=1\end{array}$}
&\mc{1}{c|}{ratio ($\%$)}
\\[2pt] \hline

$2000\times 1000$ & 8975  & 6216  &69.3 &11077  & 8094  &73.1
 \\[3pt]\hline
$2000\times 2000$ & 1807  & 1001  &55.4 & 1970  & 1119  &56.8
 \\[3pt]\hline
$2000\times 4000$ & 1224  &  659  &53.8 & 1254  &  734  &58.5
 \\[3pt]\hline
$2000\times 8000$ & 1100  &  593  &53.9 & 1103  &  626  &56.8
 \\[3pt]\hline
$4000\times 2000$ & 9863  & 7312  &74.1 &11919  & 8762  &73.5
 \\[3pt]\hline
$4000\times 4000$ & 2004  & 1148  &57.3 & 2140  & 1280  &59.8
 \\[3pt]\hline
$4000\times 8000$ & 1408  &  805  &57.2 & 1396  &  846  &60.6
 \\[3pt]\hline
$4000\times 16000$ & 1749  &  900  &51.5 & 1771  &  929  &52.5
 \\[3pt]\hline
$8000\times 4000$ &10681  & 8047  &75.3 &13187  & 9587  &72.7
 \\[3pt]\hline
$8000\times 8000$ & 2037  & 1109  &54.4 & 2085  & 1221  &58.6
 \\[3pt]\hline
$8000\times 16000$ & 1594  &  902  &56.6 & 1630  &  940  &57.7
 \\[3pt]\hline

\end{tabular}
\label{table-2}
\end{center}
\end{footnotesize}
\end{table}

%%%%%%%%%%%%%%%%%%%%%%%%%%%
\subsection{The case where the parameter $\chi$  in \eqref{Num-experiments-P1} is positive}

In the second case, we consider  problem
\eqref{Num-experiments-P1} where the parameter $\chi$ is set to $\chi=2\varrho$.
In this case, a majorization on $f$ is necessary in order for the corresponding subproblem
in the iPADMM or sPADMM to be solved efficiently. In this case, we have
$\widehat{\Sigma}_f = Q+\chi H^*H$,
$\Sigma_f = Q$, $\widehat{\Sigma}_g = 0=\Sigma_g$.
For  problem \eqref{Num-experiments-P1} with $\chi =2\varrho$,
the indefinite proximal term $S$
given in \eqref{eq-S1}  is too conservative
due to the fact that $\widehat{\Sigma}_f$ is substantially ``larger" than $\Sigma_f$.
In order to realize the full potential of allowing for an indefinite proximal term, we
make use of the condition \eqref{sum-bound} in Part (a) of Theorem \ref{Conver-Alg}
by choosing $S$ and $T$ as follows:
\begin{equation}
  S :=  \rho_3 I - \widehat{\Sigma}_f - \sigma AA^*, \qquad \quad T := 0,
\label{eq-S3}
\end{equation}
where for a chosen parameter $\eta \in (0,1/2)$, say $\eta := 0.49$,  and $\gamma_3 := 0.25$,
\begin{eqnarray}
  \rho_3 :=  \lambda_{\max}\Big( \frac{1}{2}Q +  \big((1-\eta)\sigma+ \gamma_3 \chi\big) AA^*\Big).
\label{eq-rho3}
\end{eqnarray}
In this case, we can easily check that the conditions in \eqref{condition-1}
for $S$ and $T$ are satisfied. In particular,
$\widehat{\Sigma}_f + S + \eta\sigma AA^* \succeq (1-\eta)\sigma\big(\lambda_{\max}(AA^*) I -AA^*\big) + \gamma_3 \chi \lambda_{\max}(AA^*) I \succ 0.$
Again, in order to ensure that the Majorized iPADMM with the proximal terms chosen
in \eqref{eq-S3} is convergent, we need to monitor the
residual,  defined similarly as in \eqref{eq-R},
in condition \eqref{sum-bound} as follows.
At the $k$th iteration,  if
$\sum_{j=1}^{k+1} R^j  \geq 50$ and $R^{k+1} \geq {10}/{(k+1)^{1.1}}$,
restart the Majorized iPADMM (using the best iterate generated so far as the initial point)
with a new $S$ as in \eqref{eq-S3} but
the parameter $\gamma_3$  in \eqref{eq-rho3} is
 increased by a factor of $1.1$. As before, the number of such restarts
is finite since eventually $\rho_3$ will be larger than
$\rho_1$ in \eqref{eq-rho1new}. In our numerical runs, each of the tested
instances encounters such a restart at most once.

Table \ref{table-3} reports the comparison between the Majorized iPADMM (whose
proximal term $S$ is the indefinite matrix given in
\eqref{eq-S3}) and the Majorized sPADMM
where the proximal term $S$ is replaced by the
positive semidefinite matrix $\lambda_{\max}(\widehat{\Sigma}_f + \sigma AA^*) I
-(\widehat{\Sigma}_f  + \sigma AA^*)$.
We can see from the
results in the table that the
iPADMM can achieve the dramatic reduction of
about 50--70\% in
the number of iterations needed to solve the problem
\eqref{Num-experiments-P1} as compared to the sPADMM.

The numerical results in this section
serve to demonstrate the benefit one can get by using an indefinite
proximal term in the Majorized iPADMM. Naturally, this calls on more research on
augmented Lagrangian function based  methods beyond their traditional domain.

%%
%% Table 3
%%

\begin{table}
\begin{footnotesize}
\begin{center}
\caption{Comparison between the number of iterations  taken by the Majorized
 iPADMM (for $S$ given by \eqref{eq-S3})
and Majorized sPADMM (for $S= \lambda_{\max}(\widehat{\Sigma}_f+\sigma AA^*)I-(\widehat{\Sigma}_f+\sigma AA^*)$) to
solve the problem \eqref{Num-experiments-P1}, with $\chi=2\varrho$, to the required accuracy
stated in \eqref{eq-stopping2}.}
\begin{tabular}{| c | c |  c | c|| c|c|c|}
\hline
 \mc{1}{|c|}{} &\mc{1}{c|}{} &\mc{1}{c|}{}&\mc{1}{c||}{}
&\mc{1}{c|}{} &\mc{1}{c|}{}&\mc{1}{c|}{}
\\[-5pt]
\mc{1}{|c|}{dim. of $H$ ($m\times n$)}
& \mc{1}{c|}{$\begin{array}{c}\mbox{sPADMM}\\ \tau=1.618\end{array}$}
&\mc{1}{c|}{$\begin{array}{c}\mbox{iPADMM}\\ \tau=1.618\end{array}$}
&\mc{1}{c||}{ratio ($\%$)}
& \mc{1}{c|}{$\begin{array}{c}\mbox{sPADMM}\\ \tau=1\end{array}$}
&\mc{1}{c|}{$\begin{array}{c}\mbox{iPADMM}\\ \tau=1\end{array}$}
&\mc{1}{c|}{ratio ($\%$)}
\\[2pt] \hline

$2000\times 1000$ &16030  & 5056  &31.5 &16638  & 5936  &35.7
 \\[3pt]\hline
$2000\times 2000$ & 2091  &  630  &30.1 & 2111  &  708  &33.5
 \\[3pt]\hline
$2000\times 4000$ & 1292  &  428  &33.1 & 1311  &  481  &36.7
 \\[3pt]\hline
$2000\times 8000$ &  972  &  372  &38.3 &  988  &  423  &42.8
 \\[3pt]\hline
$4000\times 2000$ &14499  & 5248  &36.2 &14750  & 5628  &38.2
 \\[3pt]\hline
$4000\times 4000$ & 1599  &  568  &35.5 & 1677  &  665  &39.7
 \\[3pt]\hline
$4000\times 8000$ & 1077  &  430  &39.9 & 1100  &  506  &46.0
 \\[3pt]\hline
$4000\times 16000$ & 1082  &  428  &39.6 & 1109  &  500  &45.1
 \\[3pt]\hline
$8000\times 4000$ &10528  & 4016  &38.1 &10782  & 4515  &41.9
 \\[3pt]\hline
$8000\times 8000$ & 1260  &  542  &43.0 & 1318  &  645  &48.9
 \\[3pt]\hline
$8000\times 16000$ &  983  &  439  &44.7 &  991  &  526  &53.1
 \\[3pt]\hline

\end{tabular}
\label{table-3}
\end{center}
\end{footnotesize}
\end{table}

%\section{Conclusions}\label{Conclusions}
%%%%%%%%%%%%%%%%%%%%%%%%%%%%%%%%%%%%%

\bigskip

\noindent
{\bf Acknowledgements.} The authors would like to thank Ms. Ying Cui
at Department of Mathematics, National University of Singapore for
her suggestion of  using Clarke's Mean Value Theorem  to analyze the behavior of the algorithm described in the paper.

\bigskip

{\footnotesize
}

\end{document}